 \def\smallsection#1{\smallskip\noindent\textbf{#1}.}
\title[Existence of a Non-Equilibrium Steady State for the 1d non-linear BGK equation on an interval]{Existence of a Non-Equilibrium Steady State for the non-linear BGK equation on an interval}
\author{Josephine Evans}
\email{Josephine.Evans@warwick.ac.uk}
\address{Warwick Mathematics Institute, Zeeman Building, University of Warwick, CV4 7AL}
\author{Angeliki Menegaki}
\email{angeliki.menegaki@dpmms.cam.ac.uk}
\address{DPMMS, University of Cambridge, Wilberforce Rd, Cambridge CB3 0WA, UK}
\keywords{Non-Equilibrium steady state, BGK model, Diffusive boundary conditions, Existence results, Heat transfer}
\newtheorem{thm}{Theorem}
\newtheorem{defn}{Definition}
\newtheorem{lemma}{Lemma}
\newtheorem{prop}{Proposition}
\newtheorem{remark}{Remark}
\newtheorem{condition}{Condition}
\begin{document}
\maketitle
\begin{abstract}
We show existence of a non-equilibrium steady state for the one-dimensional, non-linear BGK model on an interval with diffusive boundary conditions. These boundary conditions represent the coupling of the system with two heat reservoirs at different temperatures. The result holds when the boundary temperatures at the two ends are away from the equilibrium case, as our analysis is not perturbative around the equilibrium.  We employ a fixed point argument to reduce the study of the model with non-linear collisional interactions to the linear BGK. 
\end{abstract}
\tableofcontents
 
\section{Introduction}

This work is a contribution to the study of non-equilibrium steady states for non-linear kinetic equations. We study the existence of non-equilibrium steady states for the \textit{non-linear BGK equation} on bounded domains with diffusive boundary conditions. In this paper we look at the 1d case where the velocity variable is in $\mathbb{R}$ and the $x$ variable is in an interval with boundary conditions at different temperatures at each side. We show the existence of a non-equilibrium steady state and explore its properties. 

The BGK model of the Boltzmann's equation is a simple kinetic relaxation model introduced by  Bhatnagar, Gross and Krook in \cite{BGK54} as a toy model for Boltzmann flows. The evolution problem for the BGK model was first studied in \cite{Per89} and later in \cite{GP89} where global existence was proved and in \cite{PP93} where existence and uniqueness was proved for the initial-value problem in bounded domains. 

Here  we are interested in non-equilibrium phenomena, that is to say equations with steady states which are not Gibb's states and are induced by effects external to the system of study. In our case these external effects are present as diffusive boundary conditions. We show results which are not derived by perturbations models which have equilibrium states or by models which are close to the hydrodynamic regime. That is to say we work in the regime where the Knudsen number is not considered to be small.

We describe our model in the following subsection.

\subsection{Description of the model} 

We consider a gas of particles in the domain $(0,1)$ where the collisions among the particles are described by the nonlinear BGK operator. The distribution function $f(t,x,v)$ of the gas is the density of the particles at the position $x \in (0,1)$ with velocity $v \in \mathbb{R}$ at time $t>0$. We denote by $\kappa$ the \textit{Knudsen number}\footnote{The Knudsen number $\kappa$ is defined as the ratio between the mean free path and the typical observation length.} and
we study the existence of stationary solutions $f(x,v)$ to the following equation
\begin{align} \label{eq:nl}
&\partial_t f + v \partial_x f = \frac{1}{\kappa} \left(\rho_f \mathcal{M}_{T_f} - f\right), \\
&f(0,v) = \widetilde{\mathcal{M}}_1(v) \int_{v'<0} |v'|f(0,v')\mathrm{d}v', \qquad v>0,  \label{BC1}\\
&f(1,v) = \widetilde{\mathcal{M}}_2(v) \int_{v'>0} |v'| f(1,v') \mathrm{d}v', \qquad v<0. \label{BC2}
\end{align}
Here the \textit{spatial density} $\rho_f(x)$ and the \textit{pressure} $P_f(x):= \rho_f(x) T_f(x) $ are given respectively by 
\begin{align}
\rho_f(x) = \int_{-\infty}^\infty f(x,v) \mathrm{d}v, \qquad \rho_f(x) T_f(x) = \int_{-\infty}^\infty v^2 f(x,v) \mathrm{d}v 
\end{align} and then the \textit{local temperature} corresponding to $f$ is $T_f$. 
We denote by $\mathcal{M}_{T_f}(v)$ the Maxwellian with temperature $T_f$ i.e.
\[ \mathcal{M}_{T_f}(v) = (2\pi T_f)^{-1/2} \exp \left(-\frac{1}{2T_f}v^2 \right). \] Furthermore, $\widetilde{\mathcal{M}}_1, \widetilde{\mathcal{M}}_2$ are Maxwellians at the boundary temperatures $T_1, T_2$ respectively and they are considered to be renormalised so that, for $i=1,2$,
\begin{align} \label{eq: normalis}
\int_0^\infty v \widetilde{\mathcal{M}}_i (v) \mathrm{d}v = 1. 
\end{align} 
 This means that
\[ \widetilde{\mathcal{M}}_i (v) := \frac{1}{T_i} \exp \left( - \frac{1}{2T_i}v^2\right). \]
In other words we study the steady states of a gas which is coupled to two temperature reservoirs at the two boundaries of the domain $(0,1)$ and this coupling is implemented through the so-called \textit{diffusive boundary conditions} or \textit{Maxwell boundary conditions}. So that when particles hit one of the boundaries $\{0,1\}$, they get reflected and re-enter the domain with  new velocities drawn from the Maxwelians $\widetilde{\mathcal{M}}_i (v)$ corresponding to  temperatures $T_1, T_2$ at the two ends.

\subsection{State of the Art}
This paper is motivated by \cite{CELMM18, CELMM19}, where they study the non-linear BGK equation on the periodic torus in the presence of scatterers at two different temperatures. There it is straightforward to find one steady state and these papers show that this state is unique under certain conditions. There the non-equilibrium forcing is the same throughout the space. Also for the non-linear BGK equation there is a paper of Ukai \cite{U92}, about the existence of steady states with prescribed boundary conditions which is a situation similar to that studied here. The boundary conditions we consider here are different, since the paper of Ukai prescribes the density at either side of the interval, whereas in our paper we prescribe diffusive boundary conditions. The techniques are also different.

The majority of the papers investigating non-equilibrium steady states of kinetic equations are in the setting of the Boltzmann equation. We mention first the paper \cite{AN00} which also deals with a non-perturbative setting to show the existence of non-equilibrium steady states to the Boltzmann equation in the slab. This paper crucially uses the entropy distribution of the equation. There are also a number papers about similar problems in a perturbative setting either when the difference in temperatures is small \cite{EGKM13}, or when the Knudsen number is small and a hydrodynamic approximaiton can be used \cite{A00, AEMN10, AEMN12}. There have also been works in a spatially homogeneous setting in the presence of scatterers \cite{CLM15} for the Boltzmann equation and \cite{E16} for Kac's toy model for the Boltzmann equation. We also mention the preprint \cite{B19} which shows exponential convergence towards non-equilibrium steady states for the free transport equation in a domain with Maxwell boundary conditions.

One of the main motivations to study non-equilibrium phenomena, like the one appearing in the model of this note due to non-isothermal boundaries, is the better understanding of the \textit{Fourier's law}, which from a mathematical point of view is a very challenging problem. The Fourier's law, which is well tested for several materials, relates the macroscopic thermal flux $J(t,x)$ to the small variations of the gradient of the temperature $\nabla T(t,x)$: \begin{align} \label{eq: Fourier law}
J(t,x)= - K(T) \nabla T(t,x)
\end{align}
where $0 < K(T) < \infty$ is the thermal conductivity of the material.   It is not diffucult to see that \eqref{eq: Fourier law} implies the following diffusion equation for the temperature: $$ c(T) \partial_t T = \nabla(K(T) \nabla T) $$ where $c(T)$ is the specific heat of the system per unit volume. 

Concerning heat conduction in gases:  \eqref{eq: Fourier law} was rigorously proven in \cite{ELM94, ELM95} 
for the stationary Boltzmann equation in a slab for small Knudsen numbers and when the temperature difference is small. 
We also refer here again to  \cite{EGKM13} where the authors construct solutions to the $3$d steady problem with the Boltzmann hard spheres collision operator and diffusive boundaries with different temperatures at the two walls that do not oscillate too much. There the authors work with small temperature difference and they can see mathematically that the Fourier's law does not hold, since they are in the kinetic regime, by combining their result with pre existing numerical simulations in \cite{OAS89}. 

The abovementioned works are specific answers to the more general question in Statistical Physics: the mathematically rigorous derivation of Fourier's law or a proof of its breakdown, from microscopic, purely deterministic or stochastic,  models. For several overviews on this topic we refer the reader to \cite{BLR00, Lep16, Dhar08, BF19}. 

An example of such a microscopic heat conduction model, a model of heat reservoirs is the so-called chain of oscillators. For more information on this model we refer to \cite{RLL67, EPR99b, RBT02, Car07, CEHRB18} where questions of existence and uniqueness of a NESS and exponential approach towards it are addressed, as well as to \cite{Hair09, HM09} for interesting features of the model: for example cases where there is no spectral gap of the generator of the associated process. Quantitative results concerning the scaling of the spectral gap  in terms of the number of the particles, for special cases of the chain, can be found in \cite{Me19, BM20}.

\section{Mathematical Preliminaries}
First note that the normalisation \eqref{eq: normalis} is chosen so that the equation conserves mass, indeed we record this observation in the following Lemma: 
\begin{lemma}
The equation \eqref{eq:nl} at least formally conserves mass.  
\end{lemma}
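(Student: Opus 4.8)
The plan is to integrate equation \eqref{eq:nl} over the whole phase space $(0,1)\times\mathbb{R}$ and check that none of the three terms contributes to the time derivative of the total mass $m(t):=\int_0^1\int_{\mathbb{R}} f(t,x,v)\,\mathrm{d}v\,\mathrm{d}x$. Everything is done formally: I assume enough decay in $v$ and regularity in $x$ to differentiate under the integral sign, apply Fubini's theorem, and use the fundamental theorem of calculus in $x$. So I would start from $m'(t) = -\int_0^1\int_{\mathbb{R}} v\,\partial_x f\,\mathrm{d}v\,\mathrm{d}x + \frac{1}{\kappa}\int_0^1\int_{\mathbb{R}}\bigl(\rho_f\mathcal{M}_{T_f}-f\bigr)\,\mathrm{d}v\,\mathrm{d}x$ and treat the collision and transport contributions separately.

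The collision term is immediate: for each fixed $x$, $\mathcal{M}_{T_f(x)}$ is a probability density in $v$, so $\int_{\mathbb{R}}\rho_f(x)\mathcal{M}_{T_f(x)}(v)\,\mathrm{d}v = \rho_f(x) = \int_{\mathbb{R}} f(x,v)\,\mathrm{d}v$, whence $\int_{\mathbb{R}}(\rho_f\mathcal{M}_{T_f}-f)\,\mathrm{d}v=0$ pointwise in $x$. This is just the local mass conservation built into the BGK relaxation operator.

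The real content is the vanishing of the transport term. Integrating $v\partial_x f$ in $x$ first turns $-\int_0^1\int_{\mathbb{R}} v\,\partial_x f\,\mathrm{d}v\,\mathrm{d}x$ into $\int_{\mathbb{R}} v f(t,0,v)\,\mathrm{d}v - \int_{\mathbb{R}} v f(t,1,v)\,\mathrm{d}v$, so it suffices to show that the net flux through each wall is zero. At $x=0$ I set $J_0^-:=\int_{v'<0}|v'|f(t,0,v')\,\mathrm{d}v'$; then the incoming flux is $\int_{v<0} v f(t,0,v)\,\mathrm{d}v = -J_0^-$, while by the boundary condition \eqref{BC1} and the normalisation \eqref{eq: normalis} the reflected flux is $\int_{v>0} v f(t,0,v)\,\mathrm{d}v = J_0^-\int_{v>0} v\widetilde{\mathcal{M}}_1(v)\,\mathrm{d}v = J_0^-$, so they cancel. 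The same computation at $x=1$, using \eqref{BC2} together with the fact that $\widetilde{\mathcal{M}}_2$ is even and normalised (so $\int_{v<0} v\widetilde{\mathcal{M}}_2(v)\,\mathrm{d}v=-1$), gives $\int_{\mathbb{R}} v f(t,1,v)\,\mathrm{d}v=0$ as well. Combining the two cancellations with the collision identity yields $m'(t)=0$.

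The only thing requiring care is bookkeeping of signs in the boundary integrals ($|v'|$ versus $v'$) and the observation that the normalisation \eqref{eq: normalis} is exactly what makes the reflected flux equal the impinging flux; there is no genuine analytic obstacle, which is why the statement is phrased as a formal identity. A fully rigorous version would, in addition, need $vf$ and $v^2 f$ integrable and the traces $f(t,0,\cdot),f(t,1,\cdot)$ well defined, all of which will be available once the function spaces for $f$ are fixed later in the paper.
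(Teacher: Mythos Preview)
Your proof is correct and follows essentially the same approach as the paper: integrate the equation over phase space, observe that the collision term vanishes because $\mathcal{M}_{T_f}$ has unit mass, reduce the transport term to boundary fluxes via the fundamental theorem of calculus, and use the boundary conditions \eqref{BC1}--\eqref{BC2} together with the normalisation \eqref{eq: normalis} to see that the net flux at each wall is zero. Your write-up is in fact slightly more careful with the sign bookkeeping than the paper's own version.
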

\begin{proof} We write
\begin{align*} \frac{\mathrm{d}}{\mathrm{d}t} \int f(x,v) \mathrm{d}x\mathrm{d}v =& -\int v\partial_x f \mathrm{d}x \mathrm{d}v + \frac{1}{\kappa} \left(\int \rho(x) \mathcal{M}_T(v) \mathrm{d}v \mathrm{d}x -\int f\right)  \\
 =& -\int v \partial_x f = \int vf(1,v)\mathrm{d}v - \int v f(0,v) \mathrm{d}v . \end{align*}
Then we show that each of the boundary terms is zero:
\begin{align*}\int v f(1,v) \mathrm{d}v =& \int_{-\infty}^0 vf(1,v)\mathrm{d}v + \int _0^\infty v f(1,v) \mathrm{d}v \\
 =& \int_0^\infty |v'| f(1,v')\mathrm{d}v' \int_{-\infty}^0 \tilde{M}_2(v) v \mathrm{d}v + \int_0^\infty v f(1,v) \mathrm{d}v \\
=& 0. 
\end{align*} Similarly we show that the other boundary term is $0$ as well, which concludes the proof. 
\end{proof}

\subsection{Notation} We write $f(x) = \mathcal{O}(g(x))$ to denote that there is a constant $C>0$ such that $\left\lvert f(x) \right\rvert \le C \left\lvert g(x) \right\rvert$. We occasionally write $ A \lesssim B$  in order to say that $A \leq C B$ for some constant $C$ that only depends on the two temperatures $T_1,T_2$.  We denote by $\mathcal{B}(A)$ the Borel $\sigma$-algebra on $A$ and by $n_x$ the outward unit vector at $x \in \{0,1\}$.  We also write $C_c^{\infty}(X)$ for the space of the compactly supported smooth functions on $X$.

\subsection{Plan of the paper}
We introduce the main results in the next Section. In Section  \hyperref[subsection: fixed point thrm]{4} we present the proofs of the main results. This is split into subsections showing the different criteria needed to be fulfilled in order to apply the Schauder fixed point Theorem. In particular in subsection  \hyperref[subsection Linfty bounds]{4.3} we present the asymptotic behaviour of the moments in order to get an $L^{\infty}$ bound on the temperature profile, and then to prove H\"older continuity of it in \hyperref[subsection: Holder continuity]{4.4}.  In \hyperref[subsection: contin of F]{4.5} we prove the continuity of the mapping to which we apply Schauder's Theorem in   \hyperref[subsection: fixed point thrm]{4.6}.
 Finally, we conclude with Section \hyperref[sec: discussion]{5} with a discussion of the results and possible avenues for future research.

\section{Main Results}

We introduce the following condition, under which our main Theorem holds.
\begin{condition} \label{condition}
We say that the pair of temperatures $T_1,T_2$ satisfy this condition when
\begin{itemize} 
\item[(C1)] $\kappa^2 T_1 > \gamma_2$  and  \label{con1}
\item[(C2)]  $\sqrt{T_2} - \sqrt{T_1} \ge \gamma_1\kappa^{1/2} T_2^{1/4}$ \label{con2}
\end{itemize} 
 where $ \gamma_1, \gamma_2$ are positive constants and $\kappa >0$ is the time renormalizing constant in front of the collision operator in equation \eqref{eq:nl}, \textit{i.e.} the Knudsen number. 
\end{condition}

%
Our main results on the steady state of the nonlinear BGK operator with diffusive boundary conditions are summarized in the following Theorem. 
\begin{thm} \label{thm:main}
For every two fixed temperatures $T_1,T_2$ satisfying Condition \ref{condition}, there exists  a steady state which satisfies equation \eqref{eq:nl} with boundary conditions \eqref{BC1} and \eqref{BC2}.
Furthermore, this steady state has the following properties:
\begin{itemize}
\item It has zero momentum uniformly in  $x \in (0,1)$.
\item It has constant density and pressure equal to $\sqrt{T_1T_2}$, asymptotically with $T_1$. In particular, for all $x \in (0,1),$ \begin{align*}
1- \gamma_0 \kappa^{-1/2}T_1^{-1/4} \leq &\rho_f(x) \leq 1+ \gamma_1 \kappa^{-1/2}T_1^{-1/4} \\
 \sqrt{T_1T_2} \lesssim &P_f(x) \lesssim   \sqrt{T_1T_2}.
\end{align*}   
\item Its temperature profile is $1/2$-H\"older continuous and also it is asymptotically equal to  $\sqrt{T_1T_2}$ with the deviation from $\sqrt{T_1T_2}$ decreasing as $T_1$ increases: for all $x \in (0,1)$,
\begin{align*} 
\sqrt{T_1T_2}(1-  \gamma_1\kappa^{-1/2}T_1^{-1/4}) \lesssim T_f(x) \lesssim  \sqrt{T_1T_2}(1 +\gamma_0 \kappa^{-1/2}T_1^{-1/4}), 
 \end{align*} for some constants $\gamma_0, \gamma_1$ and $\kappa$ the constant in front of the collisional operator in \eqref{eq:nl}. 
\end{itemize} 
 
\end{thm}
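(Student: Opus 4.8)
The plan is to recast the stationary problem as a fixed point equation and apply the Schauder fixed point theorem, following the strategy announced in the plan of the paper. Concretely, given a candidate temperature profile $T(x)$ lying in a suitable convex compact subset $\mathcal{K}$ of $C^{0}([0,1])$ (bounded above and below by multiples of $\sqrt{T_1T_2}$, and equi-H\"older with exponent $1/2$), I would first solve the \emph{linear} stationary BGK equation
\begin{align*}
v\partial_x f = \frac{1}{\kappa}\bigl(\rho_f \mathcal{M}_{T(x)} - f\bigr)
\end{align*}
with the diffusive boundary conditions \eqref{BC1}--\eqref{BC2}, where now the Maxwellian temperature $T(x)$ is \emph{prescribed} and only the density $\rho_f$ is self-consistent. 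This linear problem is the one studied via the explicit method of characteristics: integrating along the lines $x \mapsto x \pm \kappa$-time, one writes $f$ in mild (Duhamel) form in terms of the boundary data and the source $\rho_f \mathcal{M}_{T}$, then closes the equation for $\rho_f$ as a fixed point of a contraction (or via a well-posedness result for the linear BGK, which may be quoted). Having solved for $f$, one reads off the new pressure $P_f(x)=\int v^2 f\,dx$ and density $\rho_f(x)$, and defines the updated temperature $\mathcal{F}(T)(x) := P_f(x)/\rho_f(x)$. A fixed point of $\mathcal{F}$ is exactly a stationary solution of the nonlinear equation.

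The verification then splits into the three hypotheses of Schauder, matching the subsections referenced in the plan. First, \textbf{invariance}: one must show $\mathcal{F}(\mathcal{K}) \subseteq \mathcal{K}$. This requires the $L^\infty$ bounds on $\rho_f$ and $P_f$ claimed in Theorem \ref{thm:main} — the two-sided estimate $\rho_f = 1 + \mathcal{O}(\kappa^{-1/2}T_1^{-1/4})$ and $P_f \sim \sqrt{T_1T_2}$ — which come from computing the first few velocity moments of the mild formulation and tracking how boundary fluxes propagate; Condition \ref{condition} (with its inequalities $\kappa^2 T_1 > \gamma_2$ and $\sqrt{T_2}-\sqrt{T_1}\ge \gamma_1\kappa^{1/2}T_2^{1/4}$) is precisely what is needed to make these error terms small enough that $\mathcal{F}(T)$ stays inside the prescribed interval. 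Second, \textbf{equicontinuity}: the $1/2$-H\"older bound on $\mathcal{F}(T)$, uniform over $\mathcal{K}$, should follow from differentiating (or taking difference quotients of) the moment identities in $x$; the square-root rate is characteristic of the half-space geometry and the $|v|$-weighted boundary integrals, and one expects the gain to come from the $v\partial_x$ transport together with integrability of the Maxwellian tails. Third, \textbf{continuity of $\mathcal{F}$}: if $T_n \to T$ uniformly, then $\mathcal{M}_{T_n}\to \mathcal{M}_T$ with dominated convergence, the corresponding mild solutions $f_n$ converge, and hence $P_{f_n}/\rho_{f_n} \to P_f/\rho_f$; the only care needed is uniform lower bounds on $\rho_{f_n}$ to control the quotient, which the invariance step already supplies.

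Once the fixed point $T_f$ is obtained, the stated properties of the steady state are essentially corollaries of the estimates used along the way: zero momentum follows from the structure of the diffusive boundary conditions (the normalisation \eqref{eq: normalis}) combined with the fact that $\int v f\,dv$ is $x$-independent for a stationary solution and vanishes at the boundary, exactly as in the mass-conservation lemma above; constant density and pressure $\sim\sqrt{T_1T_2}$, and the two-sided bounds on $\rho_f$, $P_f$, $T_f$, are read off from the moment estimates in the invariance step; and the $1/2$-H\"older regularity of $T_f$ is inherited from membership in $\mathcal{K}$.

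I expect the main obstacle to be the \textbf{invariance step}: obtaining two-sided, quantitatively sharp bounds on the density and pressure of the solution of the \emph{linear} problem, uniformly over all admissible input profiles $T \in \mathcal{K}$, and showing that the resulting output temperature $P_f/\rho_f$ lands back in the same interval. This is delicate because the linear solution is only given implicitly (the density $\rho_f$ solves its own fixed point equation inside the linear solve), so the moment bounds must be closed self-consistently; it is here that the precise form of Condition \ref{condition} — in particular the lower bound on $\sqrt{T_2}-\sqrt{T_1}$ relative to $\kappa^{1/2}T_2^{1/4}$ — has to be used to absorb all the error terms, and getting the powers of $\kappa$ and $T_1$ to match is the heart of the argument. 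A secondary technical point is proving the $1/2$-H\"older bound with a constant independent of the input profile, since naive differentiation of the transport equation loses a power of $v$ near grazing velocities $v \approx 0$ and one must instead exploit the boundary integral structure.
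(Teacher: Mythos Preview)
Your proposal is essentially the same Schauder-based strategy the paper carries out, and the three ingredients you isolate (invariance via $L^\infty$ moment bounds, uniform $1/2$-H\"older regularity of the output temperature, and continuity of $\mathcal{F}$) match the paper's Propositions \ref{prop:linftybounds}, \ref{prop: Holder cont for tau}, and \ref{prop:ctyF}. Two points of divergence are worth noting. First, the paper does \emph{not} build H\"older regularity into the domain $\mathcal{K}$: it works on the larger closed convex set $S_{T_1,T_2}=\{T\in C([0,1]):T_1\le T\le T_2\}$ and shows instead that the \emph{image} $\mathscr{F}(S_{T_1,T_2})$ is uniformly $1/2$-H\"older, hence relatively compact by Arzel\`a--Ascoli; this spares you from matching the H\"older constant of the output to that of the input. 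Relatedly, the invariant interval the paper uses is $[T_1,T_2]$ itself, not an interval centered at $\sqrt{T_1T_2}$---the asymptotic $\tau\sim\sqrt{T_1T_2}$ is an intermediate estimate, and Condition~\ref{condition}(C2) is exactly what forces $\sqrt{T_1T_2}(1\pm\gamma\kappa^{-1/2}T_1^{-1/4})$ back inside $[T_1,T_2]$.

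Second, your treatment of the linear solve is too breezy. The paper does not obtain the linear steady state by a contraction on $\rho_f$; instead it constructs an explicit piecewise-deterministic Markov process (Definition~\ref{stochastic process}), proves non-explosion, identifies its law as a weak solution, and then establishes existence and uniqueness of the stationary measure via a Doeblin minorisation (Lemma~\ref{lem: uniq of f for linear model}). The point of going probabilistic is to sidestep possible non-uniqueness at the grazing set $\{(0,0),(1,0)\}$, where the mild/Duhamel formulation alone is ambiguous. Once uniqueness is secured, the explicit Duhamel representation (Lemma~\ref{lem: representation of sol}) is what drives all the moment and H\"older estimates, much as you anticipate.
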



\begin{remark}
The fact that the steady state of this equation has zero momentum uniformly in $x$, implies it is also a solution to the similar time independent equation,
\[ v\partial_x f = \rho_f(x) \mathcal{M}_{u_f,T_f}(v) -f, \] with the boundary conditions \eqref{BC1} and \eqref{BC2}. Here,
\[ \mathcal{M}_{u_f, T_f}(v) = (2\pi T_f)^{-1/2} e^{-(v-u_f)^2/T_f}, \] and
\[ \rho_f u_f := \int_\mathbb{R} f(x,v)v \mathrm{d}v. \]
\end{remark}

\section{Proofs} 

\subsection{Strategy of Proof}
In order to prove the existence of a steady state when $T_1, T_2$ satisfy condition \ref{condition}, we perform a fixed point argument. We look at the following linear equation with the same spatially variable diffusive boundary conditions for given temperature profile $T(x)$:
\begin{align} 
&\partial_t f+ v\partial_x f = \rho(x) \mathcal{M}_{T(x)} - f, \label{eq:steady} \\
&f(0,v) = \widetilde{M}_1(v) \int_{v'<0} |v'|f(0,v')\mathrm{d}v', \qquad v>0, \label{bc:left}\\
&f(1,v) =  \widetilde{M}_2(v) \int_{v'>0} |v'| f(1,v') \mathrm{d}v', \qquad v<0. \label{bc:right}
\end{align}
where $\mathcal{M}_{T(x)} $ is the Maxwelian with temperature $T(x)$.  
\begin{remark}
This differs from equation \eqref{eq:nl} since we use a fixed temperature profile in the Maxwellian on the right hand side rather than the temperature profile coming from $f$.
\end{remark}

We prove the two following facts:
\begin{itemize}
\item The PDE \eqref{eq:steady}-\eqref{bc:left}-\eqref{bc:right} is the equation on the law of a stochastic process and this stochastic process has a unique equilibrium state. This equilibrium steady state has a temperature profile which we call $\tau(x)$.
\item If $T_1 \leq T(x) \leq T_2$ and $T_1, T_2$ satisfy condition \ref{condition} then we have that $T_1 \leq \tau(x) \leq T_2$ and $\tau(x)$ is $1/2$-H\"older continuous with modulus of continuity depending on $T_1,T_2$.
\end{itemize}
We define the map $\mathscr{F}(T)=\tau$ which is a map between continuous functions on $(0,1)$ and thanks to the first fact above, it is well-defined. Then we apply the Schauder fixed point theorem using the second fact above to show that $\mathscr{F}$ has a fixed point.  From the definition of the mapping $\mathcal{F}$ we get that a fixed point implies that the temperature profiles of the nonlinear and the linear model will coincide. Therefore, for $T$ being this fixed point, corresponding to the two temperature profiles,  a steady state of the linear model \eqref{eq:steady}-\eqref{bc:left}-\eqref{bc:right},  will also be a steady state of the nonlinear model \eqref{eq:nl}-\eqref{BC1}-\eqref{BC2}. In the following sections we make it precise how to define the map $\mathscr{F}(T)$, then give bounds on $\mathscr{F}(T)$ which allow us to prove point 2. Finally, we use these to apply the Schauder fixed point theorem. 

\subsection{Definition of the map $\mathscr{F}(T)$.}
In this section we work in the case $\kappa =1$ in order not to track too many constants and to simplify the presentation, since quantitative bounds in this section do not have impact on our final result. In order to properly define the map $\mathscr{F}(T)$ we need a well defined way of selecting a steady state of the PDE \eqref{eq:steady}-\eqref{bc:left}-\eqref{bc:right}. In order to do this we define a stochastic process and show that this stochastic process has a unique steady state the law of which is a weak solution to the steady state version of \eqref{eq:steady}-\eqref{bc:left}-\eqref{bc:right}. First we define what we mean for a weak measure valued solution of \eqref{eq:steady}-\eqref{bc:left}-\eqref{bc:right}.
\begin{defn} \label{defin of weak sol}
A weak solution in the sense of measures to the PDE \eqref{eq:steady}-\eqref{bc:left}-\eqref{bc:right} is a triple $\mu_{1,t}, \mu_{2,t}, \mu_t$ with $\mu_i$ satisfying that for every test function supported on $\mathbb{R}_i = \mathbb{R}_+$ for $i=1$ or $\mathbb{R}_i = \mathbb{R}_-$ for $i=2$ 
\[ \int_{\mathbb{R}_i} \phi(v) \mu_i(\mathrm{d}v) = \int_{\mathbb{R}- \mathbb{R}_i} |v'| \mu_i(\mathrm{d}v') \int_{\mathbb{R}_i} v \widetilde{\mathcal{M}}_i(v) \phi(\xi,v) \mathrm{d}v \] where $\xi=0,1$, the left boundary for $i=1$ and the right boundary for $i=2$.   
Furthermore
\begin{align*} \int_0^\infty &\iint_{(0,1) \times \mathbb{R}} \left(\partial_t \phi(t,x,v) + v \partial_x \phi(t,x,v) + \int_{\mathbb{R}} \phi(t,x,v')\mathcal{M}_{T(x)}(v')\mathrm{d}v' - \phi(t,x,v) \right) \mu_t(\mathrm{d}x, \mathrm{d}v)\mathrm{d}t \\&+ \iint_{(0,1) \times \mathbb{R} } \phi(0,x,v) \mu_0(\mathrm{d}x \mathrm{d}v) = \int_0^\infty  \int_\mathbb{R} \phi(t,1,v) \mu_2(\mathrm{d}v) \mathrm{d}t - \int_0^\infty  \int_\mathbb{R}  \phi(t, 0,v)\mu_1(\mathrm{d}v) \mathrm{d}t .
\end{align*}
\end{defn}

\smallsection{Existence and uniqueness for the linear BGK equation with diffusive boundary conditions}
For our purposes we give a probabilistic interpretation of the evolution of the process for the linear BGK. Note also that  similar techniques for the free transport equation with diffusive and specular reflective boundary conditions for higher dimensions have been applied in \cite{BeFo19}. We work on the level of stochastic processes because there is the possibility of some non-uniqueness occuring at what is known as the `grazing set' which in our case is the two points $(0,0), (1,0)$. Defining a stochastic process allows us to set values at these points.

\begin{prop} \label{prop:welldefined}
For every given continuous function $T(x)$ there exists a well defined way in which we can select a triple of measures $\mu_1, \mu_2, \mu$ which is a stationary solution in a weak sense to the linear PDE \eqref{eq:steady}-\eqref{bc:left}-\eqref{bc:right}.
\end{prop}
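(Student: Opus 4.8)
The plan is to realise the desired stationary solution as the unique invariant law of a piecewise-deterministic Markov process (PDMP), so that the selection of the triple $(\mu_1,\mu_2,\mu)$ amounts to reading off that invariant measure together with its boundary traces. The process $(X_t,V_t)$ on $[0,1]\times\mathbb{R}$ is the one suggested by the probabilistic reading of \eqref{eq:steady}: between events the particle flies freely, $\dot X_t=V_t$; at the arrival times of an independent rate-$1$ Poisson clock its velocity is resampled, $V\mapsto V'\sim\mathcal{M}_{T(X_t)}$, with the position unchanged (consistent because the collision term has zero $v$-integral); and when $X_t$ hits $\xi\in\{0,1\}$ it is reflected with a new velocity drawn from the probability density $p_1(v)=v\widetilde{\mathcal{M}}_1(v)\mathbf 1_{v>0}$ at $\xi=0$, resp.\ $p_2(v)=|v|\widetilde{\mathcal{M}}_2(v)\mathbf 1_{v<0}$ at $\xi=1$ --- genuine probability measures by \eqref{eq: normalis}, and precisely the kernels encoded by \eqref{bc:left}-\eqref{bc:right}. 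At the two grazing points $(0,0),(1,0)$ I fix any convention (say, resample immediately from $p_i$); this is irrelevant below since the invariant measure will not charge them. Computing the forward Kolmogorov operator of this process returns $-v\partial_x f+\rho_f(x)\mathcal{M}_{T(x)}-f$ with the stated boundary traces, so any stationary law is a weak solution in the sense of Definition \ref{defin of weak sol}.

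First I would check that the process is well-defined for all time: the velocity jumps form a rate-$1$ Poisson process and do not accumulate, while between two successive boundary hits with no intervening collision the particle must cross the whole interval, so such hits are separated by at least $1/|V|$ with $V\sim p_i$; since $p_i$ has Gaussian tails, Borel--Cantelli gives a.s.\ finitely many boundary hits on bounded time intervals, and the grazing set is a.s.\ never reached. Next, for existence of an invariant probability measure, with $M:=\sup_x T(x)<\infty$ (finite since $T$ is continuous, and $T\le T_2$ in the application) the Lyapunov function $W(x,v)=1+v^2$ satisfies a Foster--Lyapunov drift bound $\mathcal{L}W\le -W+C$: $W$ is untouched by free flight, replaced by a quantity of mean $\le M+1$ at a collision (rate $1$), and replaced by a sample with second moment $\sup_i\int v^2 p_i<\infty$ at a boundary hit. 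Together with non-explosivity and the uniform sub-Gaussian control on velocities this yields an invariant probability measure $\bar\mu$ by Krylov--Bogolyubov.

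To obtain uniqueness I would prove a minorisation: for a fixed compact $K\subset[0,1]\times\mathbb{R}$ there are $t_0,c>0$ and a probability measure $\nu$ with density bounded below on an open ball $B\subset(0,1)\times\mathbb{R}$ such that $P_{t_0}((x,v),\cdot)\ge c\,\nu$ for all $(x,v)\in K$; this follows by conditioning on exactly one collision before time $t_0$ and no boundary hit, using that $\mathcal{M}_{T(x)}>0$ on all of $\mathbb{R}$ (so the post-collision velocity has a strictly positive density) and that free transport then spreads the position over an open subinterval. Combined with the drift bound, Harris's theorem makes $\bar\mu$ the unique invariant probability measure, so the rule ``$\mathscr{F}(T):=$ temperature profile of $\bar\mu$'' is unambiguous. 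Finally I would set $\mu:=\bar\mu|_{(0,1)\times\mathbb{R}}$ and let $\mu_1,\mu_2$ be the stationary boundary-flux measures at $\xi_1=0,\xi_2=1$: their outgoing parts are the traces of the outgoing flux of $\bar\mu$, and by the reflection mechanism their incoming parts are exactly (outgoing flux mass)$\times v\widetilde{\mathcal{M}}_i(v)\mathbf 1$, i.e.\ the boundary relation in Definition \ref{defin of weak sol}; the remaining space-time identity there is Dynkin's formula for the PDMP evaluated at $\bar\mu$, namely $\mathbb{E}_{\bar\mu}[\mathcal{L}\phi]+(\text{boundary terms})=0$ for all test $\phi$, with $\mu_0=\mu_t=\mu$.

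The genuinely delicate step is the uniqueness/irreducibility argument: establishing the minorisation for the reflected process while controlling the grazing set and the possibility of arbitrarily small re-entry speeds, and then cleanly matching the boundary contributions in Dynkin's formula to the exact weak formulation of Definition \ref{defin of weak sol}. The non-explosion, tightness and drift estimates are routine given the Gaussian tails of all the jump kernels, and the existence half then follows from standard Markov-process theory, in the spirit of the free-transport analyses of \cite{B19,BeFo19}.
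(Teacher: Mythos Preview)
Your approach is essentially the same as the paper's: construct the natural PDMP associated with \eqref{eq:steady}--\eqref{bc:left}--\eqref{bc:right}, prove non-explosion by Borel--Cantelli, identify its law as a weak solution via the generator (the paper follows \cite{BeFo19} here, exactly as you anticipate), and obtain a unique invariant measure by a minorisation argument. The paper uses a global Doeblin bound rather than your drift-plus-local-minorisation (Harris) variant, but this is a cosmetic difference.

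There is, however, a real gap in your minorisation sketch. You propose to get the lower bound by conditioning on ``exactly one collision before time $t_0$ and no boundary hit''. But your Lyapunov function $W(x,v)=1+v^2$ has sublevel sets of the form $[0,1]\times[-R,R]$, which contain points arbitrarily close to the spatial boundary. Starting from $(x_0,v_0)$ with $x_0$ small and $v_0<0$ (or $1-x_0$ small and $v_0>0$), the probability of \emph{no} boundary hit before a collision is $1-e^{-x_0/|v_0|}\to 0$ as $x_0\to 0$, so the minorisation constant obtained this way degenerates and Harris does not apply. You flag this step as ``genuinely delicate'', which is right, but the mechanism you propose is not the one that works.

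The paper resolves this by proving a uniform Doeblin lower bound via iterated Duhamel formulae, splitting into three cases according to whether the deterministic trajectory from $(x_0,v_0)$ reaches the left boundary, the right boundary, or neither within a short time $\epsilon$. In the first two cases the lower bound is produced by \emph{using} the boundary reflection (the re-injected velocity has an explicit density $v\widetilde{\mathcal{M}}_i$) followed by one collision; only in the third case does a purely interior two-collision argument suffice. If you want to keep the Harris framework you would need either a Lyapunov function whose sublevel sets stay away from $\{0,1\}\times\mathbb{R}$ (not obvious here), or to incorporate the boundary reflection into your minorisation scenario as the paper does.
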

We split the proof into two lemmas.
We first construct a stochastic process and show that it is well defined and then show that the law of this is a desired weak solution. 
\begin{defn}[Construction of the stochastic process] \label{stochastic process}
Let $(R^{1}_i)_{i \geq 1},(R^{2}_i)_{i \geq 1},  $  be two sequences of random variables with $R^{1}_i$ having law $v \widetilde{\mathcal{M}}_1$ and $R^{2}_i$ having law $|v| \widetilde{\mathcal{M}}_2$. Furthermore let $N_i$ be a set of $N(0,1)$ random variables and $S_i$ be a sequence of exponential random variables with rate $1$. Now we define the deterministic map
\[ \zeta(x,v) = \inf \{ s >0, \hspace{3pt} x+vs \in \{0,1\}\} \]  which is our first collision with one of the boundaries. Then we define recursively
\[ T_{k+1} = T_k + \min \{S_{k+1}, \zeta(X_{T_k}, V_{T_k})  \}. \] Then for $t \in [T_k, T_{k+1})$ we have
\[ X_t = X_{T_k} + (t-T_k)V_{T_k}, V_t = V_{T_k}. \] We jump at the times $T_k$ so that
\[ V_{T_{k+1}} = \mathbbm{1}_{T_{k+1}-T_k = S_{k+1}}\sqrt{T(X_{T_k})}N_{k+1} + \mathbbm{1}_{X_{T_{k+1}}=0}R^1_{k+1} + \mathbbm{1}_{X_{T_{k+1}}=1} R^2_{k+1} \]
where here $T(X_{T_k})$ is the temperature at the position $X_{T_k}$.
\end{defn}
\begin{lemma}[Non-explosion of the process]
This stochastic process defined in \ref{stochastic process} is well defined and exists for all $t>0$.
\end{lemma}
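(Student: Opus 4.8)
\smallsection{Proof strategy} The plan is to show that the jump times $(T_k)_{k\ge 0}$ defined in Definition~\ref{stochastic process} satisfy $T_k\to\infty$ almost surely; on the full-measure event where this holds the recursions define $(X_t,V_t)$ unambiguously for every $t\ge 0$, and on the complementary null set one simply sends the process to a cemetery state, which affects none of the laws used later. It is convenient to classify each jump as a \emph{collision jump} --- the case $S_{k+1}<\zeta(X_{T_k},V_{T_k})$, in which the velocity is redrawn as a scaled Gaussian --- or a \emph{boundary jump} --- the case $\zeta(X_{T_k},V_{T_k})\le S_{k+1}$, in which it is redrawn from $v\widetilde{\mathcal{M}}_1$ or $|v|\widetilde{\mathcal{M}}_2$ --- and then, for an arbitrary $T>0$, to bound separately the number of jumps of each type before time $T$.

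Since each $S_k$ is a fresh $\mathrm{Exp}(1)$ variable independent of the past, the lack-of-memory property allows one to realise the same process from a single rate-one Poisson clock whose rings persist through the instantaneous boundary jumps; the collision jumps are then a sub-collection of the rings of that clock, of which there are only $\mathrm{Poisson}(T)$ many in $[0,T]$, hence finitely many almost surely. This splits $[0,T]$ into at most $1+\mathrm{Poisson}(T)$ maximal sub-intervals containing no collision jump, and it remains to show that each of these contains only finitely many boundary jumps. The structural point here is that the reflected velocities $R^1_i,R^2_i$ are almost surely \emph{strictly} oriented back into $(0,1)$, because the densities $v\widetilde{\mathcal{M}}_1(v)$ and $|v|\widetilde{\mathcal{M}}_2(v)$ vanish at $v=0$; hence inside a collision-free sub-interval the velocity is piecewise constant and the successive boundary jumps \emph{alternate} between $x=0$ and $x=1$, so the time between two consecutive boundary jumps there is exactly $1/|v|$, where $v$ is the intervening post-reflection velocity, i.e.\ a fresh sample of $R^1_i$ or $R^2_i$.

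If some collision-free sub-interval contained infinitely many boundary jumps, then a tail of one of the two interleaved i.i.d.\ sequences of reflected velocities involved in it, say $(R^1_{n+2m})_{m\ge 0}$ or $(R^2_{n+2m})_{m\ge 0}$ where $n$ is the index of the first boundary jump of the sub-interval, would satisfy $\sum_m 1/|v_m|\le T<\infty$. But for each fixed $j$ the event $\{n=j\}$ is measurable with respect to the initial data together with $N_{<j}$, $S_{\le j}$ and $(R^1_i)_{i<j},(R^2_i)_{i<j}$, hence is independent of $(R^1_i)_{i\ge j}$ and $(R^2_i)_{i\ge j}$, so on $\{n=j\}$ those tails are genuine i.i.d.\ sequences; since strictly positive i.i.d.\ random variables do not converge to $0$ (second Borel--Cantelli lemma), their series of reciprocals diverges almost surely, and a countable union over $j$ and over the finitely many sub-intervals yields a contradiction. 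Therefore $[0,T]$ contains only finitely many jumps for every $T>0$, which is exactly the assertion.

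I expect the main obstacle to be the bookkeeping in this last paragraph: one has to verify carefully that the velocities of the boundary jumps inside a given collision-free sub-interval do form (an interleaving of) two honest i.i.d.\ sequences with a shift independent of them, which rests both on the alternation of the two walls --- itself a consequence of the strict positivity of the reflected speeds --- and on the already-established finiteness of the number of collision jumps. The residual measure-zero scenarios (a particle reaching a wall with exactly zero velocity, or a collision clock ringing precisely at a wall-hitting time) are harmless: they are either excluded outright, the corresponding events having probability zero, or swept into the cemetery convention.
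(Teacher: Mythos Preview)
Your argument is correct, but the route is quite different from the paper's.  The paper does not split into collision versus boundary jumps at all: it simply defines, for each $k$, the event
\[
A_k=\{R^1_{2k+1}<1,\ R^1_{2k+2}<1,\ R^2_{2k+1}<1,\ R^2_{2k+2}<1,\ S_{2k+1}>1,\ S_{2k+2}>1\},
\]
observes that the $A_k$ are independent with common probability $p>0$, and checks that on $A_k$ one always has $T_{2k+2}-T_{2k}>1$ (either $T_{2k+1}-T_{2k}=S_{2k+1}>1$, or the $(2k+1)$-th jump is at a wall, in which case the outgoing speed is below $1$ and $S_{2k+2}>1$ force $T_{2k+2}-T_{2k+1}>1$).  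Second Borel--Cantelli then gives $T_k\to\infty$ directly.

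Your approach trades this one-line trick for a structural decomposition: a memoryless coupling with a rate-one Poisson clock to cap the number of collision jumps in $[0,T]$, and then the observation that between collision jumps the boundary hits alternate with inter-hit times $1/|R^{\cdot}_{\cdot}|$, so accumulation would force a tail series $\sum_m 1/|R^{\cdot}_{n+2m}|$ to converge, which is impossible for i.i.d.\ positive variables.  This buys some insight into \emph{why} neither mechanism can cause explosion and would transfer more readily to variants (other clock rates, other reflection laws), at the cost of the Poisson-clock recoupling and the index bookkeeping you flag at the end.  The paper's argument is shorter and sidesteps both of those, but is more ad hoc.

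Two small points.  First, your sentence ``strictly positive i.i.d.\ random variables do not converge to $0$, so their series of reciprocals diverges'' is phrased backwards: what you need is that the \emph{reciprocals} $1/|R^{\cdot}_{n+2m}|$ are i.i.d.\ positive and hence do not tend to $0$, so their sum diverges.  Second, your independence claim for $\{n=j\}$ is in fact stronger than you need: it is enough that the event $\bigl\{\sum_{m\ge 0} 1/R^1_{j+2m}<\infty\bigr\}$ is null for every \emph{fixed} $j$, and then take a countable union over $j$, since the bad event is contained in that union regardless of how $n$ depends on the past.
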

\begin{proof}
We would like to show that this process is non-explosive i.e. $T_i \rightarrow \infty$ almost surely. Lets look at the event 
\[  A_k = \{ R^1_{2k+1} < 1, R^1_{2k+2} <1, R^2_{2k+1} <1, R^2_{2k+2}, S_{2k+1}>1, S_{2k+2}>1  \}. \] The $A_k$'s are independent events and $\mathbb{P}(A_k)=\mathbb{P}(A_1) = p>0$. Therefore by Borel-Cantelli $A_k$ happens infinitely often almost surely. On $A_k$ we can see that $T_{2k+2}-T_{2k}>1$. This is because $A_k$ ensures that $T_{2k+1} -T_{2k}>1$ or $\zeta(X_{T_{2k}},V_{T_{2k}})<1$, and in the second case we know that $X_{T_{2k+1}} \in \{0,1\}$ so the next jump time is defined by $R^1_{2k+1}, R^2_{2k+2}$ and $S_{2k+2}$ which are all chosen so that $T_{2k+2}-T_{2k+1}>1$ if $X_{T_{2k+1}} \in \{0,1\}$. 
\end{proof}

\begin{lemma}
The law of this stochastic process is a weak solution to the SDE.
\end{lemma}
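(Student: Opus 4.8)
The plan is to derive the identity in Definition~\ref{defin of weak sol} as the expectation of a pathwise change-of-variables formula for the piecewise-deterministic process $(X_t,V_t)$ of Definition~\ref{stochastic process}, matching the resulting terms one by one with the transport, collision and boundary contributions. Fix a test function $\phi\in C_c^\infty([0,\infty)\times[0,1]\times\mathbb{R})$, let $T_\ast$ be such that $\phi(t,\cdot,\cdot)\equiv 0$ for $t\ge T_\ast$, and write $\mu_t=\mathrm{Law}(X_t,V_t)$ for the process started from an initial law $\mu_0$. On each interval $[T_k,T_{k+1})$ the trajectory $s\mapsto(X_s,V_s)$ is $C^1$ with $\dot X_s=V_s$ and $\dot V_s=0$, while at each $T_k$ only $V$ jumps; summing the increments of $s\mapsto\phi(s,X_s,V_s)$ over the intervals and the jump times gives, along every trajectory,
\[
-\phi(0,X_0,V_0)=\int_0^{\infty}\big(\partial_s\phi+V_s\,\partial_x\phi\big)(s,X_s,V_s)\,\mathrm{d}s+\sum_{k\,:\,T_k\le T_\ast}\Big(\phi(T_k,X_{T_k},V_{T_k})-\phi(T_k,X_{T_k},V_{T_k}^-)\Big),
\]
with $V_{T_k}^-$ the velocity just before the $k$-th jump. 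To take expectations I would note that the previous lemma makes this sum almost surely finite, and in fact its Borel--Cantelli construction, run with a Chernoff estimate on the independent events $A_k$ rather than merely ``infinitely often'', shows that the number of jumps before $T_\ast$ has exponential tails and is in particular integrable; a localisation at the jump times $T_n$ followed by dominated convergence then legitimises passing to the expectation. Since $\phi$ has compact time-support, the left-hand side becomes $-\iint\phi(0,x,v)\,\mu_0(\mathrm{d}x\,\mathrm{d}v)$, and by Fubini $\mathbb{E}\int_0^\infty(\partial_s\phi+V_s\partial_x\phi)(s,X_s,V_s)\,\mathrm{d}s=\int_0^\infty\iint(\partial_t\phi+v\,\partial_x\phi)\,\mu_s(\mathrm{d}x\,\mathrm{d}v)\,\mathrm{d}s$.

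Next I would split the jump sum by the cause of the jump: a jump at $T_{k+1}$ is a \emph{collision jump} when $T_{k+1}-T_k=S_{k+1}$ and a \emph{boundary jump} when $X_{T_{k+1}}\in\{0,1\}$. The collision clocks $S_k$ are i.i.d.\ rate-one exponentials, independent of the positions and of the Gaussians $N_k$, so the usual compensation formula, together with the fact that a collision at position $x$ redraws the velocity from $\mathcal{M}_{T(x)}$, yields
\[
\mathbb{E}\sum_{\text{collision jumps}}\Big(\phi(T_k,X_{T_k},V_{T_k})-\phi(T_k,X_{T_k},V_{T_k}^-)\Big)=\int_0^{\infty}\iint\Big(\int_\mathbb{R}\phi(s,x,v')\,\mathcal{M}_{T(x)}(v')\,\mathrm{d}v'-\phi(s,x,v)\Big)\mu_s(\mathrm{d}x\,\mathrm{d}v)\,\mathrm{d}s.
\]
Together with the previous paragraph this reproduces the whole left-hand side of the identity in Definition~\ref{defin of weak sol}, so it remains only to identify the boundary-jump sum with the two boundary terms on the right.

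For the boundary jumps I would introduce the boundary measures as crossing intensities of the process. At a hit of $x=0$ the incoming velocity $V_{T_k}^-<0$ is replaced by a fresh $R^1_k$ of law $v\widetilde{\mathcal{M}}_1(v)\,\mathrm{d}v$ on $(0,\infty)$, independent of the past, and at a hit of $x=1$ the incoming $V_{T_k}^->0$ is replaced by $R^2_k$ of law $|v|\widetilde{\mathcal{M}}_2(v)\,\mathrm{d}v$ on $(-\infty,0)$. For $i=1,2$ I would let $\mu_{i,t}$ be the measure whose incoming-to-the-domain part is the time-differentiated expected arrival flux of the process at that boundary and whose outgoing part is the one produced by the reset; by construction this forces $\mu_{i,t}$ to satisfy the diffusive-reflection relation demanded in Definition~\ref{defin of weak sol}. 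One checks that these crossing intensities are absolutely continuous in $t$ — the arrival time of the process at a boundary is a sum of a stopping time and a quantity of the form $(\text{distance})/(\text{post-jump speed})$, whose law is absolutely continuous because the post-jump speed is — so the $\mu_{i,t}$ are well-defined finite measures. A direct evaluation of the expected boundary-jump sum against these intensities, using the normalisation~\eqref{eq: normalis} and an integration by parts in $x$ to fix the constants and the orientation, produces exactly $\int_0^\infty\int_\mathbb{R}\phi(s,1,v)\,\mu_{2,s}(\mathrm{d}v)\,\mathrm{d}s$ and $-\int_0^\infty\int_\mathbb{R}\phi(s,0,v)\,\mu_{1,s}(\mathrm{d}v)\,\mathrm{d}s$. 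Substituting this back into the expectation of the pathwise identity yields the identity of Definition~\ref{defin of weak sol}, the two structural constraints on $\mu_1,\mu_2$ holding by construction.

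The main obstacle I expect is this boundary term: making the crossing-intensity measures $\mu_{1,t},\mu_{2,t}$ rigorous — their finiteness via the quantitative form of the non-explosion estimate, and their absolute continuity in $t$ — and then checking carefully that the flux-weighted reset rule of Definition~\ref{stochastic process} turns into exactly the diffusive boundary relation of Definition~\ref{defin of weak sol}, with the right constant and the right orientation of the flux. The pathwise change of variables, the localisation argument, the Poisson compensation of the collision jumps, and the rewriting of expectations as integrals against $\mu_s$ are all routine.
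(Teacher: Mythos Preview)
Your proposal is correct in outline and takes a genuinely different route from the paper. The paper follows \cite{BeFo19} and computes the infinitesimal generator: it expands $\mathbb{E}\big[\phi(t+s,X_{t+s},V_{t+s})-\phi(t,X_t,V_t)\big]$ by cases (no jump, one collision jump, boundary hit) and sends $s\to 0$. You instead telescope $\phi(s,X_s,V_s)$ pathwise over the inter-jump intervals and take expectations, i.e.\ a Dynkin/It\^o formula for the PDMP. Your route makes the compensation of the collision jumps cleaner and more explicit; the paper's route avoids any integrability discussion for the jump sum since it never writes an infinite sum.

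For the boundary piece, the paper does something more concrete than your ``crossing intensities'': it \emph{defines} $\mu_\pm^i$ as the laws of $(T_i,X_{T_i},V_{T_i})$ and $(T_i,X_{T_i},V_{T_i}^-)$ restricted to boundary events, and then verifies the diffusive relation between $\mu_-^i$ and $\mu_+^i$ by a one-line conditioning on the independent reset variables $R^1_i,R^2_i$. This is exactly your boundary measure, but written down explicitly rather than characterised as an intensity; the explicit form makes the reflection identity in Definition~\ref{defin of weak sol} immediate. One small point: your appeal to ``an integration by parts in $x$ to fix the constants and the orientation'' is not needed and is a bit misleading. In your pathwise formula the transport term already lives on $[0,1]$ and no boundary term is produced by it; the right-hand side of Definition~\ref{defin of weak sol} arises purely from the expected boundary-jump sum, once you split each $\mu_i$ into its incoming and outgoing parts and use that the outgoing part is, by construction of the process, the flux-weighted Maxwellian times the incoming mass. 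The paper itself ends with boundary terms written as jump differences $\int\!\!\int(\phi(t,\xi,w)-\phi(t,\xi,v))\,w\widetilde{\mathcal{M}}_i(w)\,\mu_i(\mathrm{d}v)$ rather than exactly the form in Definition~\ref{defin of weak sol}; the two coincide once one unpacks $\mu_i=\mu_+^i+\mu_-^i$.
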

\begin{proof} 
Here we follow \cite{BeFo19}. We begin by showing how we can represent the boundary measures: for a set $A \in \mathcal{B}( (0,\infty) \times   \{0,1\}\times \Sigma_{\pm})$ with $\Sigma_{\pm}:=  \{ (x,v) \in \{0,1\} \times \mathbb{R}: \pm v \cdot n_x<0 \}$, we 
  introduce the measures
\begin{align*} \mu_{-}^i(A) &=  \mathbb{E}\left( \mathbbm{1}_{(T_i, X_{T_i}, V_{T_i}) \in A} \mathbbm{1}_{T_i = \zeta(X_{T_{i-1}},  V_{T_{i-1}})}\right),\\ \mu_+^i(A) &= \mathbb{E}\left( \mathbbm{1}_{(T_i, X_{T_i}, V_{T_i-}) \in A} \mathbbm{1}_{T_i = \zeta(X_{T_{i-1}}, V_{T_{i-1}})}\right)  
\end{align*}
\textit{i.e.} $\mu_-^i$ is the law of the triple $ (T_i, X_{T_i},V_{T_i})$, \textit{i.e.} after the collision with a boundary, and  $\mu_+^i$ is the law of the triple $ (T_i, X_{T_i},V_{T_i-})$, \textit{i.e.} exactly before the collision with a boundary.
 Then we have
 \begin{align*}  \mu_{+}(A)&= \sum_i \mu_{+}^i(A)\quad \text{for}\ A \in \mathcal{B}((0,\infty)\times \Sigma_-) ,\\
 \mu_{-}(A)&= \sum_i \mu_{-}^i(A)\quad \text{for}\ A \in \mathcal{B}((0,\infty)\times \Sigma_+) . 
 \end{align*}
These boundary measures satisfy the desired boundary condition. Indeed, we investigate the relationship between these two measures:
\begin{align*}
\mu_-^i(A) = & \mathbb{E}\left( \mathbbm{1}_{(T_i, X_{T_i}, V_{T_i}) \in A} \mathbbm{1}_{T_i = \zeta(X_{T_{i-1}}, V_{T_{i-1}})}\right)\\
=& \mathbb{E} \left(\mathbbm{1}_{X_{T_i}=0} \mathbbm{1}_{(T_i, X_{T_i}, R^1_i) \in A} + \mathbbm{1}_{X_{T_i}=1} \mathbbm{1}_{(T_i, X_{T_i}, R^2_i) \in A} \right)\\
=& \int \mathbb{E}\left( \mathbbm{1}_{X_{T_i}=0} \mathbbm{1}_{(T_i, X_{T_i}, v) \in A}\right) v \widetilde{\mathcal{M}}_1(v) \mathrm{d}v +\int \mathbb{E}\left( \mathbbm{1}_{X_{T_i}=1} \mathbbm{1}_{(T_i, X_{T_i}, v) \in A}\right) v \widetilde{\mathcal{M}}_2(v) \mathrm{d}v. 
\end{align*}
Therefore,
\begin{align*}
\mu_-^i(A) =& \iint_{(0,T)\times(0,\infty)}  \int_{-\infty}^0\left(\mathbbm{1}_{(t,0,w) \in A} \right)w \widetilde{\mathcal{M}}_1(w) \mathrm{d}w \mu_+^i(\mathrm{d}t,  \mathrm{d}v) \\
& + \iint_{(0,T)\times(0,\infty)} \int_0^\infty \left(  \mathbbm{1}_{(t,1,w) \in A} \right) w \widetilde{\mathcal{M}}_2(w) \mathrm{d}w \mu_+^i(\mathrm{d}t,  \mathrm{d}v).
\end{align*}
Testing against a test function $\phi \in C_c^{\infty} (\mathbb{R}_{\pm})$  we recover the boundary conditions as in the Definition \ref{defin of weak sol}. 
Now we would like to show that the tripple will be a weak solution to the PDE. For $$\mu_1 = \mathbbm{1}_{\{x=0\}}(\mu_+ + \mu_-),\quad \mu_2 = \mathbbm{1}_{\{x=1\}}(\mu_++\mu_-),$$  and $\phi \in C_c^{\infty}((0,\infty) \times (0,1) \times \mathbb{R})$,  we Taylor expand around $(t,X_t,V_t)$ and we write
\begin{align*} &\mathbb{E} \left( \phi(t+s, X_{t+s}, V_{t+s}) - \phi(t, X_t, V_t ) \right) =\\ &\qquad
 \mathbb{E}\left( \left(\phi(t+s, X_{t+s}, V_{t+s}) - \phi(t, X_t, V_t ) \right)  \mathbbm{1}_{\{X_t + sV_t  \in (0,1)\}}\right)+\\ & \qquad\qquad\qquad\qquad+  \mathbb{E}\left( \left(\phi(t+s, X_{t+s}, V_{t+s}) - \phi(t, X_t, V_t ) \right)  \mathbbm{1}_{\{X_t + sV_t  \notin (0,1)\}}\right) = \\
& \mathbb{E} \left( \left(s \partial_t \phi (t, X_t, V_t)+ s V_t \partial_x\phi(t,X_t,V_t)\right)  \mathbbm{1}_{\Big\{ \substack{\mbox{0 jumps in} \\ \mbox{$(t,t+s)$} }\Big\}}       \right)\\
&+ \mathbb{E} \left( \int_{-\infty}^\infty \left( \phi(t,X_t, w)- \phi(t, X_t, V_t) \right) (2\pi T(X_t))^{-1/2} \exp \left( - \frac{w^2}{2T(X_t)} \right) \mathrm{d}w  \mathbbm{1}_{\substack{\big\{\mbox{1 jump in} \\ \mbox{$(t,t+s)$} \big\}}}    \right) + \mathcal{O}(s) \\
&+ \mathbb{E} \left( \int_0^\infty \phi(t, X_t, w) w \widetilde{\mathcal{M}}_1(w) \mathrm{d}w \mathbbm{1}_{\{X_t + sV_t < 0\}}\mathbbm{1}_{\big\{\mbox{0 jumps in $(t,t+s)$}\big\}} \right) \\
&+ \mathbb{E} \left( \int_{-\infty}^0 \phi(t, X_t, w) w \widetilde{\mathcal{M}}_2(w) \mathrm{d}w \mathbbm{1}_{\{X_t +sV_t >1\}}\mathbbm{1}_{\big\{\mbox{0 jumps in $(t,t+s)$}\big\}} \right). 
\end{align*}
Letting $s$ to go to $0$, this gives us that \begin{align*}
&\int \left( \partial_t \phi(t,x,v) +v \partial_x \phi(t,x,v) \right) \mu_t(\mathrm{d}x, \mathrm{d}v) + \int \int_w ( \phi(t,x,w) - \phi(t,x,v)) \mathcal{M}_{T(x)}(w) dw \mu_t(\mathrm{d}x, \mathrm{d}v)\\
& + \int \int (\phi(t,0,w) - \phi(t,0,v)) w \widetilde{\mathcal{M}}_1(w) \mu_1( \mathrm{d}v)\\
& + \int \int (\phi(t,1,w) - \phi(t,1,v)) w \widetilde{\mathcal{M}}_2(w) \mu_2(\mathrm{d}v)=0.
\end{align*}
Therefore, $\mu_t$ is a weak solution to the PDE according to the Definition \ref{defin of weak sol}. 
\end{proof}

In order to prove the existence and uniqueness of a steady state for this stochastic process we use Doeblin's Theorem (we can find this in \cite{H10} for example). Which is as follows
\begin{condition}[Doeblin's condition] \label{DC}
If $\mathcal{P}$ is a stochastic semigroup acting on probability measures over a set $\Omega$ then $\mathcal{P}$ satisfies Doeblin's condition if there exists $\alpha \in (0,1)$ and $\nu \in \mathscr{P}(\Omega)$, a probability measure on $\Omega$ such that for every $z \in \Omega$ we have
\[ \mathcal{P}\delta_z \geq \alpha \nu.  \] 
\end{condition}
\begin{thm}[Doeblin's Theorem]
If $\mathcal{P}$ satisfies Doeblin's condition then it has a unique steady state.
\end{thm}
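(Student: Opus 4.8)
The plan is to show that Doeblin's condition forces $\mathcal{P}$ to be a strict contraction for the total variation distance on $\mathscr{P}(\Omega)$, and then to invoke the Banach fixed point theorem. Throughout I would regard $\mathcal{P}$, which is a Markov operator on probability measures, as extended in the canonical way to a positivity-preserving linear operator on finite signed measures (act separately on the positive and negative parts), keeping the same notation.

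First I would record the elementary consequence of Condition~\ref{DC} that for any finite positive measure $\lambda$ on $\Omega$ one has $\mathcal{P}\lambda \ge \alpha\,\lambda(\Omega)\,\nu$: write $\lambda = \int_\Omega \delta_z\,\lambda(\mathrm{d}z)$, apply $\mathcal{P}$ under the integral, and use $\mathcal{P}\delta_z \ge \alpha\nu$. Next I would prove the contraction estimate. Given $\mu_1,\mu_2 \in \mathscr{P}(\Omega)$, decompose $\mu_1-\mu_2 = (\mu_1-\mu_2)_+ - (\mu_1-\mu_2)_-$ by Hahn--Jordan; both parts share the common mass $m := (\mu_1-\mu_2)_\pm(\Omega) = \tfrac12\|\mu_1-\mu_2\|_{\mathrm{TV}}$ because $\mu_1,\mu_2$ are probability measures. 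By linearity $\mathcal{P}\mu_1 - \mathcal{P}\mu_2 = \mathcal{P}(\mu_1-\mu_2)_+ - \mathcal{P}(\mu_1-\mu_2)_-$, and the first step gives $\mathcal{P}(\mu_1-\mu_2)_\pm \ge \alpha m\,\nu$, so that $\rho_\pm := \mathcal{P}(\mu_1-\mu_2)_\pm - \alpha m\,\nu$ are nonnegative measures of total mass $(1-\alpha)m$ each, with $\mathcal{P}\mu_1-\mathcal{P}\mu_2 = \rho_+ - \rho_-$. The triangle inequality then yields $\|\mathcal{P}\mu_1-\mathcal{P}\mu_2\|_{\mathrm{TV}} \le \|\rho_+\|_{\mathrm{TV}}+\|\rho_-\|_{\mathrm{TV}} = 2(1-\alpha)m = (1-\alpha)\|\mu_1-\mu_2\|_{\mathrm{TV}}$.

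With the contraction in hand, I would finish by noting that $(\mathscr{P}(\Omega),\|\cdot\|_{\mathrm{TV}})$ is a complete metric space left invariant by $\mathcal{P}$ and that $1-\alpha \in (0,1)$, so the Banach fixed point theorem produces a unique $\mu_\infty \in \mathscr{P}(\Omega)$ with $\mathcal{P}\mu_\infty = \mu_\infty$, i.e. a unique steady state; the same contraction also gives $\|\mathcal{P}^n\mu_0-\mu_\infty\|_{\mathrm{TV}}\le(1-\alpha)^n\|\mu_0-\mu_\infty\|_{\mathrm{TV}}$ for every initial $\mu_0$, so exponential relaxation comes for free. Should one wish to avoid citing completeness of $\mathscr{P}(\Omega)$, one can instead start from any $\mu_0$, observe that $(\mathcal{P}^n\mu_0)_n$ is Cauchy in total variation by the geometric bound, take $\mu_\infty$ to be its limit (a probability measure, since total mass and positivity pass to the limit), verify $\mathcal{P}\mu_\infty = \mu_\infty$ by continuity of $\mathcal{P}$, and read off uniqueness from $\|\mu-\mu'\|_{\mathrm{TV}} = \|\mathcal{P}\mu-\mathcal{P}\mu'\|_{\mathrm{TV}}\le(1-\alpha)\|\mu-\mu'\|_{\mathrm{TV}}$ applied to two steady states.

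I do not expect a genuine obstacle here: this is the textbook proof of Doeblin's theorem. The only places requiring a little care are measure-theoretic rather than conceptual — checking that $\mathcal{P}$ extends to a positive linear operator on finite measures for which the Doeblin lower bound survives integration against $\lambda$ in the first step, and, if a self-contained treatment is wanted, the completeness of the total variation metric on $\mathscr{P}(\Omega)$.
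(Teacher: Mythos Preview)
Your proof is correct and is precisely the standard argument: Doeblin's condition yields the contraction $\|\mathcal{P}\mu_1-\mathcal{P}\mu_2\|_{\mathrm{TV}}\le(1-\alpha)\|\mu_1-\mu_2\|_{\mathrm{TV}}$, and Banach's fixed point theorem finishes. Note, however, that the paper does not actually prove this theorem; it merely states it and cites \cite{H10} (Hairer's lecture notes) as a reference, then moves on to verify the Doeblin lower bound for the specific process at hand. Your argument is the one that would be found in that reference, so there is nothing to compare beyond observing that you have supplied what the paper outsources.
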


\begin{lemma} \label{lem: uniq of f for linear model}
Let $\mathcal{P}_t$ be the stochastic semigroup corresponding to the evolution of the stochastic process defined in \ref{stochastic process}, then there exist a time $t_*$ such that $\mathcal{P}_{t_*}$ satisfies Doeblin's condition \ref{DC}. In particular, the stochastic process has a unique steady state. 
\end{lemma}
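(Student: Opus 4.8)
The plan is to verify Doeblin's condition \ref{DC} for $\mathcal P_{t_*}$ at a suitable time $t_*$ and then invoke Doeblin's theorem. Concretely, I would fix a small box $B=(3/8,5/8)\times(-\varepsilon,\varepsilon)$ in $[0,1]\times\mathbb R$, take $\nu$ to be normalised Lebesgue measure on $B$, and produce constants $t_*>0$ and $\alpha\in(0,1)$ such that, for every starting point $z=(x_0,v_0)$,
\[ \mathbb P_z\big[(X_{t_*},V_{t_*})\in A\big]\ \ge\ \alpha\,\nu(A)\qquad\text{for all measurable }A\subseteq B . \]
This would follow from exhibiting an explicit event $G=G(z)$, of probability bounded below uniformly in $z$, on which the conditional law of $(X_{t_*},V_{t_*})$ has a Lebesgue density that is bounded below by a fixed constant on $B$.

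The event $G$ would be built from a bounded number of stages. First, from any $z$, with probability bounded below (in fact equal to $1-e^{-1}$) the particle undergoes an interior collisional jump before time $1$: conditioning on the sequence of pure transport-and-reflection transit times, which are independent of the exponential clocks $S_i$, the probability that no clock rings during a time window of total length $\ell$ equals exactly $e^{-\ell}$ regardless of $z$, and the non-explosion argument used above for the full process shows these transit times accumulate to $\infty$ almost surely. At the first such jump, at time $\sigma<1$ and position $Y\in(0,1)$, the velocity is resampled as $\sqrt{T(Y)}\,N$ with $N\sim\mathcal N(0,1)$; since $T$ is bounded between two positive constants on $[0,1]$, this Gaussian has a density bounded below on every fixed compact velocity set. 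Second, I would \emph{steer}: on a further event of probability bounded below \emph{uniformly in} $Y$, ask that the resampled velocity be of order one and directed so that the particle enters the central band, that no clock ring or boundary collision occur before it does, and that the next clock ring occur while it is still inside $(3/8,5/8)$. By memorylessness of the exponential time and the lower bound on the Gaussian density, this produces a second collisional jump at a time $\sigma_2$ and a position $X_{\sigma_2}$ whose conditional law — after a change of variables from the resampled velocity and the exponential time — has a density bounded below on a fixed sub-interval of $(3/8,5/8)$, uniformly in $Y$. Third, from $(X_{\sigma_2},\cdot)$ I would perform one more controlled collisional jump at $\sigma_3\in(\sigma_2+h,\sigma_2+2h)$ for small $h$, with no boundary collision in between and no jump at all in $(\sigma_3,t_*)$; all of these have probability bounded below and force the two intervening resampled velocities to lie in fixed compact sets. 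Choosing $t_*$ (e.g.\ $t_*=4$) large enough that $\sigma_3<t_*$ and $t_*-\sigma_3$ stays in a fixed bounded interval keeps every bound uniform in $z$.

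On $G$, freezing $X_{\sigma_2}$, $\sigma_2$ and $\sigma_3$, the pair $(X_{t_*},V_{t_*})$ is the image of the two freshly resampled Gaussian velocities under the linear map with matrix $\left(\begin{smallmatrix}\sigma_3-\sigma_2 & t_*-\sigma_3\\ 0 & 1\end{smallmatrix}\right)$, which is invertible because $\sigma_3-\sigma_2\ge h>0$; hence its conditional law has a density bounded below on a box of fixed size centred at $(X_{\sigma_2},0)$. Averaging over $X_{\sigma_2}$, whose conditional density is bounded below on a fixed sub-interval of $(3/8,5/8)$ by the second stage, then gives a density bounded below on the fixed box $B$, uniformly in $z$ — which is exactly the minorisation $\mathcal P_{t_*}\delta_z\ge\alpha\nu$, and Doeblin's theorem yields the unique invariant measure.

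I expect the main obstacle to be the control of the \emph{spatial} variable: between jumps the position is a deterministic function of the velocities, so to obtain a law that is absolutely continuous — with a density bounded below on a \emph{fixed} set — in both $x$ and $v$ one must (i) arrange enough independent velocity resamplings feeding into $(X_{t_*},V_{t_*})$ for the relevant linear map to be invertible, and (ii) ensure that the intermediate position $X_{\sigma_2}$ is smeared across a fixed band rather than concentrating near the uncontrolled initial configuration. The regime of large initial speed, in which the particle makes many boundary reflections before the first collisional jump, is precisely where one has to check that the constants — in particular the uniform lower bound on the probability of a collisional jump and on the density of $X_{\sigma_2}$ — remain independent of $z$.
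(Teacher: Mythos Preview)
Your approach is correct in outline and would succeed if carried out, but it is genuinely different from the paper's argument, so a comparison is useful.

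\textbf{How the paper proceeds.} The authors work analytically with the density $f(t,x,v)$ starting from $\delta_{(x_0,v_0)}$, using Duhamel's formula repeatedly to propagate explicit lower bounds. They split into three cases according to whether the initial datum hits a boundary within time $\epsilon$: if it does (cases 1 and 2), one boundary reflection already produces a Maxwellian factor in $v$ and a computable lower bound on $\rho(t,x)$; if it does not (case 3), two interior collisional iterations are needed. In every case they arrive at a bound of the form $f(t_*,x,v)\ge \beta\,G(v)\,\mathbbm{1}_{x-2\epsilon v\in(\epsilon,1-\epsilon)}$, which gives the minorisation.

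\textbf{How your route differs.} You stay entirely on the stochastic side and build a single positive-probability event $G$, uniform in the starting point, consisting of three successive interior collisional jumps that first erase the initial velocity, then steer the particle into a central band and smear its position, and finally produce a joint density in $(x,v)$ via an invertible linear map of two fresh Gaussian velocities. You avoid the paper's boundary-based case split altogether: the large-$|v_0|$ regime is absorbed into the observation (which you state correctly) that the waiting time for the first interior jump is exactly $\mathrm{Exp}(1)$ regardless of how many boundary reflections intervene, because the exponential clocks are restarted at each $T_k$ and the conditional survival probability over any window of length $\ell$ is $e^{-\ell}$.

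\textbf{What each buys.} The paper's Duhamel approach produces completely explicit constants and an explicit $\nu$; it also matches the analytic style of the later sections (moment bounds, H\"older continuity). Your probabilistic construction is more geometric and arguably cleaner conceptually, and it generalises more readily---the three-jump mechanism works whenever the jump kernel has a density bounded below on compacts, without any special structure of the boundary mechanism. The price is the bookkeeping you flag at the end: one must check that the compact sets for $V_2,V_3$, the interval $(h,2h)$ for $\sigma_3-\sigma_2$, the size $\varepsilon$ of the target box, and the ``no boundary hit'' constraints are simultaneously satisfiable. They are (take, e.g., $\delta$ small for the box half-width and note that the displacement $(\sigma_3-\sigma_2)V_2$ needed to land in $B$ is at most $\delta+O(\varepsilon)$, hence the trajectory stays well inside $(0,1)$), but this verification should be written out.
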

\begin{proof}
We wish to find a lower bound for Doeblin's condition. We apply Duhamel's formula to find that have that 
\begin{align} \label{du:interior} x-vt \in (0,1), \hspace{10pt} f(t,x,v) = e^{-t} f\left(0,x-vt,v\right) + \int_0^t e^{-(t-s)} \rho(x-v(t-s)) \mathcal{M}_{T(x-v(t-s)}(v) \mathrm{d}s. \end{align} Similarly,
\begin{align} \label{du:leftright}  &x-vt \leq 0, \hspace{10pt} f(t,x,v) = e^{-x/v} f\left(t-\frac{x}{v},0,v\right) + \int_0^{x/v} e^{-(x/v-s)} \rho(vs)\mathcal{M}_{T(vs)}(v) \mathrm{d}s,\quad \text{and}\\
& x-vt \geq 1, \hspace{10pt} f(t,x,v) = e^{\frac{(1-x)}{|v|}} f\left(t-\frac{(1-x)}{|v|},1,v\right) + \int_0^{\frac{(1-x)}{|v|}} e^{-\left(\frac{-(1-x)}{|v|}-s\right)} \rho(1-vs) \mathcal{M}_{T(1-vs)}(v) \mathrm{d}s. 
\end{align} 
In light of this, we define
\[ R(t,x,v) := \left\{ \begin{array}{ll} x/v, &\text{for}\ x/v \leq t \\
t, &\text{for}\ x/v > t, v>0 \\
t, &\text{for}\ v=0 \\
t, &\text{for}\ (1-x)/|v| > t, v<0 \\
(1-x)/|v|, &\text{for}\ -(1-x)/v \leq t \end{array} \right. \] and 
\[ \Pi f (x,v) := \rho_f(x) \mathcal{M}_{T(x)}(v). \]
Then we have the following lower bound
\begin{align} \label{du:final} 
f(t,x,v) \geq & \int_0^R e^{-R} (\Pi f)(s, x-v(R-s),v) \mathrm{d}s.
\end{align}


Regarding the boundary conditions, we substitute in the first term from \eqref{du:interior}:
\[ f(t,0,v) = \widetilde{\mathcal{M}}_1(v) \int_{-\infty}^0 |u|f(t,0,u) \mathrm{d}u \geq \widetilde{\mathcal{M}}_1(v) \int_{-1/t}^0 e^{-t} |u|f(0,-ut,u) \mathrm{d}u \] and
\[ f(t,1,v) = \widetilde{\mathcal{M}}_2(v) \int_0^\infty |u| f(t,1,u)\mathrm{d}u \geq \widetilde{\mathcal{M}}_2(v) \int_0^{1/t} e^{-t} |u| f(0,1-ut,u) \mathrm{d}u. \] 
Now if we consider the initial condition $f(0,x,v) = \delta_{x_0}(x)\delta_{v_0}(v)$, 
we have 
\[ f(t,0,v) \geq e^{-t} \widetilde{\mathcal{M}}_1(v) |v_0| \delta_{x_0}(-v_0 t)  \mathbbm{1}_{t|v_0| \leq 1}\quad \text{and}\]
\[ f(t,1,v) \geq e^{-t} \widetilde{\mathcal{M}}_2(v) |v_0| \delta_{x_0}(1-v_0t) \mathbbm{1}_{t|v_0| \leq 1}. \]
Then we have
\[ x-vt \leq 0, \hspace{10pt} f(t,x,v) \geq e^{-t} \widetilde{\mathcal{M}}_1(v) |v_0| \delta_{x_0}\left(-v_0\left(t-\frac{x}{v}\right) \leq 1 \right). \] We need to know the local density and integrate in $v$. This gives us when $v_0 <0$,
\begin{align*}
\rho(t,x) \geq & \int_{x/t}^{x/(t-1/|v_0|)_+} e^{-t} \widetilde{\mathcal{M}}_1(v)|v_0| \delta_{x_0}\left(-v_0 \left(t-\frac{x}{v} \right)\right)\mathrm{d}v\\
\geq & \int_0^{\operatorname{min}(|v_0|t,1)} e^{-t} |v_0| \frac{1}{x|v_0|} \left( \frac{xv_0}{y+v_0t}\right)^2 \widetilde{\mathcal{M}}_1 \left( \frac{xv_0}{y+v_0t} \right) \delta_{x_0}(y) \mathrm{d}y \\
\geq & \mathbbm{1}_{x_0+v_0t \leq 0} \hspace{3pt} e^{-t} \frac{xv_0^2}{(x_0+v_0 t)^2} \widetilde{\mathcal{M}}_1 \left( \frac{xv_0}{x_0+v_0t} \right). 
\end{align*} 

Also when $v_0 >0$, we have
\begin{align*}
\rho(t,x) \geq & \int_{-\infty}^{(1-x)/|v|} e^{-t} \widetilde{\mathcal{M}}_2(v) |v_0| \delta_{x_0} \left( 1-v_0 \left( t-\frac{1-x}{|v|}\right)\right) \mathrm{d}v \\
\geq & e^{-t} |v_0| \frac{(1-x)v_0}{(x_0-1+v_0t)^2} \widetilde{\mathcal{M}}_2 \left( \frac{(1-x)v_0}{1-v_0t-x_0} \right)\mathbbm{1}_{x_0+v_0t \geq 1}.
\end{align*}

Now in the simplest case where $x_0+v_0t \in (0,1)$ we have
\[ \rho(t,x) = \delta_{x_0+v_0t}(x). \]

Now we are going to focus on the case where $x-vt \in (0,1)$ in which case
\[ f(t,x,v) \geq \int_0^t e^{-(t-s)} \rho(s, x-v(t-s)) \mathcal{M}_{T(x-v(t-s))}(v) \mathrm{d}s. \]
We have
\[ \mathcal{M}_{T(x)}(v) \geq \frac{1}{\sqrt{2\pi T_2}} e^{-v^2/2T_1} \geq \sqrt{\frac{T_1}{T_2}} \mathcal{M}_{T_1}(v) := \alpha G(v). \] Using this we can write
\[ f(t,x,v) \geq \alpha G(v) \int_0^t e^{-(t-s)} \rho(s,x-v(t-s)) \mathrm{d}s. \]

For a fixed $\epsilon>0$, we consider the following three cases 
\begin{enumerate}
\item $v_0<0, x_0/|v_0| \leq \epsilon$,
\item $v_0>0, (1-x_0)/v_0 \leq \epsilon,$
\item Neither of these holds.
\end{enumerate} 
We observe that in case (1) we know that $t \geq \epsilon$ implies that $x_0+v_0t \leq 0$ and in case (2): if $t \geq \epsilon$ then $x_0+v_0t \geq 1$.

\textit{For the case (1)}, we have
\[\rho(t,x) \geq e^{-t} \mathbbm{1}_{t \geq \epsilon} \frac{xv_0^2}{(x_0+v_0 t)^2} \widetilde{\mathcal{M}}_1 \left( \frac{xv_0}{x_0+v_0t} \right) \geq e^{-t} \mathbbm{1}_{t \geq \epsilon} \frac{x}{T_1t^2} e^{-1/2T_1\epsilon^2} . \]

Now we can substitute this into \eqref{du:final} again to get that
\begin{align*}
 f(t,x,v) &\geq \alpha e^{-t} G(v) \frac{1}{T_1 t^2} e^{-1/2T_1 \epsilon^2} \int_\epsilon^t (x-v(t-s)) ds \\ &= \frac{\alpha e^{-t}}{T_1t^2} G(v) e^{-1/2T_1 \epsilon^2} (t-\epsilon)\left(x-\frac{v}{2}(t-\epsilon) \right) \mathbbm{1}_{t\geq \epsilon}. 
 \end{align*}
  If we set $t_* = 2\epsilon$ then we have
\[ f(t_*,x,v) \geq \alpha e^{-2 \epsilon} G(v) \frac{1}{T_1 \epsilon^2} e^{-1/2T_1 \epsilon^2} \epsilon^2 \mathbbm{1}_{x-v\epsilon \in (\epsilon, 1-\epsilon)}. \]

\textit{For the case (2)}, we work essentially the same as in case (1) to get that
\[ f(t,x,v) \geq \frac{\alpha e^{-t}}{T_2 t^2} G(v) e^{-1/2T_2 \epsilon^2} (t-\epsilon)\left(1-x + \frac{v}{2}(t-\epsilon) \right). \] Setting $t_* = 2\epsilon$ we have
\[ f(t_*,x,v) \geq \alpha e^{-2 \epsilon} G(v) \frac{1}{T_2 \epsilon^2} e^{-1/2T_2 \epsilon^2} \epsilon^2 \mathbbm{1}_{x-v\epsilon \in (\epsilon, 1-\epsilon)}. \]

\textit{Finally for the third case}, we will need further iterations. Initially, we get that 
\[ \rho(t,x) \geq e^{-t} \delta_{x_0+tv_0}(x) \mathbbm{1}_{t \leq \epsilon}. \] We substitute this once into \eqref{du:final} to get
\[ f(t,x,v) \geq e^{-t} \alpha \int_0^t \delta_{x_0+sv_0}(x-v(t-s)) G(v) \mathbbm{1}_{t \leq \epsilon}\mathrm{d}s. \] Integrating in $v$ this, gives
\[ \rho(t,x) \geq e^{-t}\mathbbm{1}_{t\leq \epsilon} \alpha \int_0^t \int_{-\infty}^\infty
 \delta_{x_0+sv_0}(x-v(t-s)) G(v) \mathrm{d}v \mathrm{d}s. \] 
 After a change of variables, this is bounded below by
\begin{align*} 
\rho(t,x) \geq e^{-t}\mathbbm{1}_{t \leq \epsilon} \frac{\alpha}{t} \int_0^t G \left( \frac{x-x_0-v_0s}{t-s} \right)\mathrm{d}s \geq e^{-t}\mathbbm{1}_{t \leq \epsilon} \frac{\alpha}{t} \int_0^t G \left( \frac{1}{t-s} \right) \mathrm{d}s .  
\end{align*}
 We subsititute this back into \eqref{du:final} to get
\begin{align*} f(t,x,v) \geq & e^{-t} \frac{\alpha^2}{t} G(v) \int_0^t \mathbbm{1}_{s \leq \epsilon} \int_0^s G\left( \frac{1}{s-r}\right) \mathrm{d}r \mathrm{d}s \\
\geq & e^{-t} \frac{\alpha^2}{t} G(v) G(2/\epsilon) \int_{\epsilon}^{t} \int_0^\epsilon \mathrm{d}r \mathrm{d}s \\
\geq &  e^{-t} \frac{\alpha^2\epsilon (t-\epsilon)}{2t} G(v) G(2/\epsilon). \end{align*} Setting $t_*=2\epsilon$ we have,
\[ f(t_*,x,v) \geq e^{-2\epsilon}\frac{\alpha^2 \epsilon^2}{2\epsilon} G(v) G(2/\epsilon) \mathbbm{1}_{x-2v\epsilon \in (0,1)}. \]

Now let us set
\[ \beta = \alpha e^{-2\epsilon} \min \left\{\frac{\alpha \epsilon}{2} G\left( \frac{2}{\epsilon} \right), \frac{1}{T_1} e^{-1/2T_1 \epsilon^2} , \frac{1}{T_2} e^{-1/2T_2 \epsilon^2} \right\}. \] Then in every case we have that
\[ f(t_*,x,v) \geq \beta \mathbbm{1}_{x-2\epsilon v \in (\epsilon, 1-\epsilon)}. \]
\end{proof}

\subsection{$L^\infty$ Bounds on $\mathscr{F}(T)$.} \label{subsection Linfty bounds}
As we have uniqueness of a steady state for \eqref{eq:steady}-\eqref{bc:left}-\eqref{bc:right}, thanks to Lemma \ref{lem: uniq of f for linear model}, for $f$ being this solution with temperature profile $T$, we define the mapping  \begin{align*} \mathcal{F}: C((0,1)) \to C((0,1)),\quad  T\ \longmapsto\  \frac{\int f(x,v)|v|^2 \mathrm{d}v}{\int f(x,v) \mathrm{d}v}.
\end{align*}
In this subsection we first represent the solution to \eqref{eq:steady}- \eqref{bc:left}- \eqref{bc:right} in terms of the moments appearing in the boundary conditions. Then we check their asymptotic behaviour concerning the boundary temperatures $T_1,T_2$ so that we establish the conditions required on these temperatures in order to bound the $\mathcal{F}(T)(x)$ uniformly in $x$. The goal is to prove the proposition, 
\begin{prop} \label{prop:linftybounds}
If $T_1,T_2$ satisfy condition \ref{condition}, then we have that
\[ T_1 \leq \tau_T(x) \leq T_2\] uniformly in $x$. 
\end{prop}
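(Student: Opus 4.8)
The plan is to exploit the conservation structure of the linear equation to reduce the claim to a two-sided bound on the density, and then to obtain that bound by solving the stationary transport equation along characteristics and estimating the resulting velocity integrals in the asymptotic regime fixed by Condition \ref{condition}. The first step is to test the stationary form $v\partial_x f+f=\rho(x)\mathcal{M}_{T(x)}$ against $1$ and against $v$ and integrate in $v$: this yields $\partial_x\!\int vf\,\mathrm{d}v=0$ and $\partial_x\!\int v^2f\,\mathrm{d}v=-\!\int vf\,\mathrm{d}v$, since a centred Maxwellian has vanishing first moment. The boundary conditions \eqref{bc:left}--\eqref{bc:right} and the normalisation \eqref{eq: normalis} force $\int vf(0,v)\,\mathrm{d}v=0$, so the momentum vanishes identically; hence $E:=\int v^2f(x,v)\,\mathrm{d}v$ is $x$-independent and $\tau_T(x)=E/\rho(x)$. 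It is thus enough to sandwich the pointwise ratio $\bigl(\int v^2f(x,v)\,\mathrm{d}v\bigr)/\bigl(\int f(x,v)\,\mathrm{d}v\bigr)$ between $T_1$ and $T_2$ for every $x$.

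Next I would solve along characteristics as in \eqref{du:interior}--\eqref{du:leftright} and insert the boundary conditions, decomposing $f(x,\cdot)$ into three nonnegative pieces: a left-wall piece $\propto e^{-x/(\kappa v)}\widetilde{\mathcal{M}}_1(v)\mathbbm{1}_{v>0}$ carrying the incoming flux $J_1=\int_{v<0}|v|f(0,v)\,\mathrm{d}v$; a right-wall piece $\propto e^{-(1-x)/(\kappa|v|)}\widetilde{\mathcal{M}}_2(v)\mathbbm{1}_{v<0}$ carrying $J_2=\int_{v>0}|v|f(1,v)\,\mathrm{d}v$; and a collisional piece $\int_0^1\rho(y)\mathcal{M}_{T(y)}(v)(\cdots)\,\mathrm{d}y$. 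Integrating each piece against $1$ and against $v^2$ produces zeroth and second moments $L_0,L_2$, $R_0,R_2$, $I_0,I_2$ (all one-dimensional velocity integrals) and turns the density into the closed affine identity $\rho=J_1\ell_1+J_2\ell_2+\mathcal K\rho$ with explicit nonnegative kernels. Since a stationary solution already exists by Proposition \ref{prop:welldefined}, these are relations among fixed quantities; under (C1) the kernel $\mathcal K$ has small norm, so iterating the identity once bounds $\|\rho\|_{L^\infty}$ and $\int_0^1\rho$ above and below by constant multiples of $(J_1+J_2)T_1^{-1/2}$, and a companion moment computation gives $J_1\asymp J_2$. Rescaling $v=\sqrt{T_i}\,u$ (resp.\ $\sqrt{T(y)}\,u$) yields the structural facts I need: the correlation (Chebyshev) inequality applied to the \emph{increasing} weights $e^{-x/(\kappa v)}$ and $e^{-(1-x)/(\kappa v)}$ gives the \emph{exact} bounds $L_2/L_0\ge T_1$ and $R_2/R_0\ge T_2$, with matching upper bounds $T_i+O\bigl(\kappa^{-1}\sqrt{T_i}\,\log(\kappa^2T_i)\bigr)$; and the $v^{-1}$ in the collisional integrand makes $I_0$ logarithmically enhanced, $I_0\asymp\kappa^{-1}T^{-1/2}\log(\kappa^2T)\int_0^1\rho$ (with $T\in[T_1,T_2]$), against $I_2\lesssim\kappa^{-1}\sqrt{T_2}\int_0^1\rho$, so the collisional ``temperature'' $I_2/I_0\lesssim T_2/\log(\kappa^2T_1)$ lies strictly below $T_2$ by (C1).

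To close the upper bound I write $\tau_T(x)-T_2=\rho(x)^{-1}\bigl[(L_2-T_2L_0)+(R_2-T_2R_0)+(I_2-T_2I_0)\bigr]$: the first bracket is $\le 0$ since $L_2/L_0\le T_1<T_2$ (using (C2)), the third is negative of size $\asymp T_2I_0$, and the middle one is the only positive contribution, of size $O\bigl(\kappa^{-1}J_2\log(\kappa^2T_2)\bigr)$; combining the lower bound on $\int_0^1\rho$ with $J_1\asymp J_2$ shows the collisional deficit dominates the wall surplus, whence $\tau_T(x)\le T_2$. For the lower bound, $\tau_T(x)-T_1=\rho(x)^{-1}\bigl[(L_2-T_1L_0)+(R_2-T_1R_0)+(I_2-T_1I_0)\bigr]$, where now $L_2-T_1L_0\ge 0$, $R_2-T_1R_0\ge R_0(T_2-T_1)$, and the collisional bracket is negative of size $\lesssim T_1I_0$; rewriting (C2) as $T_2-T_1\gtrsim\kappa^{1/2}T_2^{3/4}$ makes $R_0(T_2-T_1)$ dominate $T_1I_0$, whence $\tau_T(x)\ge T_1$. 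Both competitions hold with a fixed margin once the constants $\gamma_1,\gamma_2$ are chosen large enough, uniformly in $x$.

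The main obstacle is that the free-transport exponential reweights the two outgoing half-Maxwellians at the walls, so their effective temperatures \emph{overshoot} $T_1$ and $T_2$ by logarithmically enhanced amounts rather than equalling $T_1,T_2$ exactly. The whole difficulty of the final step is to show that each overshoot is dominated --- for the upper bound by the strong cooling of the collisional term (which carries an extra factor $1/\log(\kappa^2T_1)$) and for the lower bound by the temperature gap that Condition (C2) makes available --- while simultaneously keeping the self-consistent density $\rho$ under control using only the crude one-step bounds. Tracking the competing powers of $\kappa$ and $T_i$ against the logarithmic corrections, and fixing $\gamma_1,\gamma_2$ accordingly, is where the work lies.
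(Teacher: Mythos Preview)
Your opening reduction---momentum vanishes, the pressure $E=\int v^2f\,\mathrm{d}v$ is constant, and $\tau_T(x)=E/\rho(x)$---is exactly how the paper begins (this is the first paragraph of Lemma~\ref{lemm: bounds on pressure}). From that point on, however, your route and the paper's diverge substantially.

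The paper does \emph{not} attempt a three-piece decomposition with a competition between overshoots and collisional cooling. Instead it estimates the two factors $P_T$ and $\rho_T$ separately and quantitatively: via the representation of Lemma~\ref{lem: representation of sol} and the asymptotics of the scalar constants $C_1,C_2,C_\pm$ (Lemmas~\ref{lem: est C_1,C_2}--\ref{lem: scal of term from bc}), one obtains $P_T(x)\sim\sqrt{T_1T_2}$ and $\rho_T(x)=1+O(\kappa^{-1/2}T_1^{-1/4})$ (Lemmas~\ref{lemm: bounds on pressure} and~\ref{lem: bounds on rho}). Dividing gives the sharper two-sided estimate
\[
\sqrt{T_1T_2}\bigl(1-\gamma_1\kappa^{-1/2}T_1^{-1/4}\bigr)\ \lesssim\ \tau_T(x)\ \lesssim\ \sqrt{T_1T_2}\bigl(1+\gamma_0\kappa^{-1/2}T_1^{-1/4}\bigr),
\]
and Condition~\ref{condition} is then exactly the statement that this window sits inside $[T_1,T_2]$. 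Thus the paper's proof of the proposition is one line once Lemma~\ref{lem: bounds on tau} is in hand; the content is the pressure and density asymptotics, which are also needed independently in Theorem~\ref{thm:main}.

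Your Chebyshev/competition scheme is a legitimate alternative idea, and the individual estimates you sketch (the wall overshoot $R_2/R_0-T_2=O(\kappa^{-1}\sqrt{T_2}\log(\kappa^2T_2))$, the collisional zeroth moment $I_0\asymp\kappa^{-1}T^{-1/2}\log(\kappa^2T)$) are of the right order. But two points deserve caution. First, a small slip: in your upper-bound display you write ``$L_2/L_0\le T_1$'', whereas Chebyshev gives $L_2/L_0\ge T_1$; you need the companion upper bound $L_2/L_0\le T_1+O(\kappa^{-1}\sqrt{T_1}\log(\kappa^2T_1))$ together with (C2) to conclude that bracket is nonpositive. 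Second, and more seriously, your closing competition introduces logarithms that Condition~\ref{condition} was not designed to absorb: for the lower bound you end up needing roughly $\kappa^{3/2}T_2^{1/4}\gtrsim\log(\kappa^2T_1)$, which does not follow from (C1)--(C2) uniformly in the admissible regime (e.g.\ take $\kappa^2T_1$ fixed and $T_2\sim T_1$). The paper avoids this by never invoking log-enhanced bounds: its cruder estimate $I_0\lesssim(\kappa^2T_1)^{-1/4}$ is exactly what produces the $T_1^{-1/4}$ error scale to which (C2) is tuned. So while your approach could likely be made to work under some version of the hypotheses, it does not close under Condition~\ref{condition} as stated, and it forgoes the identification $\tau_T\approx\sqrt{T_1T_2}$ that the paper's method delivers and that is recorded as part of Theorem~\ref{thm:main}.
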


We begin with the following Lemma.

\begin{lemma}
For $f$ a solution to \eqref{eq:steady}-\eqref{bc:left}-\eqref{bc:right} we have the following representation
\begin{align}
f(x,v) =& e^{-x/\kappa |v|}f(0,v) + \int_0^x e^{-(x-y)/\kappa |v|} \frac{1}{\kappa |v|} \rho(y) \mathcal{M}_{T(y)} \mathrm{d}y \qquad v>0,\\
f(x,v) =& e^{-(1-x)/\kappa |v|}f(1,v) + \int_x^1 e^{-(y-x)/\kappa |v|} \frac{1}{\kappa |v|} \rho(y) \mathcal{M}_{T(y)} \mathrm{d}y \qquad v<0.
\end{align}
\end{lemma}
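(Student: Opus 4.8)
The plan is to integrate the stationary form of \eqref{eq:steady} along characteristics, treating the velocity $v\neq 0$ as a frozen parameter. Writing the steady equation as $v\partial_x f + \tfrac1\kappa f = \tfrac1\kappa \rho(x)\mathcal M_{T(x)}(v)$ and dividing by $v$ (harmless, since the grazing set $\{v=0\}$ is a null set that contributes nothing to the boundary terms) gives, for $v>0$, the linear first-order ODE in $x$
\[
\partial_x f(x,v) + \frac{1}{\kappa v}\, f(x,v) = \frac{1}{\kappa v}\,\rho(x)\,\mathcal M_{T(x)}(v).
\]
Multiplying by the integrating factor $e^{x/(\kappa v)}$ makes the left-hand side $\partial_x\!\big(e^{x/(\kappa v)}f(x,v)\big)$; integrating from $0$ to $x$ and multiplying back by $e^{-x/(\kappa v)}$ yields the first identity, with $|v|=v$. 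For $v<0$ the same computation applies, but the characteristics now enter through the right boundary, so one integrates from $x$ to $1$ and uses the boundary datum $f(1,v)$; keeping track of signs (now $\tfrac{1}{\kappa v}<0$ and $\tfrac{1}{\kappa v}=-\tfrac{1}{\kappa|v|}$) produces the second identity.

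To make this rigorous at the level of the weak/measure-valued solutions of Definition \ref{defin of weak sol}, I would first observe that the stationary measure $\mu$ supplied by Proposition \ref{prop:welldefined} is absolutely continuous with density $f(x,v)$, that the spatial density $\rho(x)=\int f(x,v)\,\mathrm dv$ is finite (indeed one later needs it bounded and continuous), and that the boundary traces $f(0,\cdot)$ on $\{v>0\}$ and $f(1,\cdot)$ on $\{v<0\}$ exist and coincide with the boundary measures $\mu_1,\mu_2$ appearing in the construction of the stochastic process — all of this reads off the explicit law of the invariant process, whose velocities are mixtures of Gaussians and of the boundary Maxwellians and hence have all moments. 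Then, testing the weak formulation against functions $\phi(x,v)=\psi(v)\chi(x)$ with $\chi$ a smooth approximation of $\mathbbm 1_{(0,x)}$ reproduces, for a.e.\ fixed $v$, exactly the above ODE in the distributional sense in $x$; since its right-hand side is, for fixed $v$, a bounded continuous function of $x$ (using continuity of $T$ and of $\rho$), the distributional ODE has the classical Duhamel solution, which is precisely the claimed formula. Uniqueness of the weak stationary solution, already provided by Lemma \ref{lem: uniq of f for linear model}, guarantees the representation is unambiguous.

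The computation itself is a one-line integrating-factor argument; the real work — and the main obstacle — is the regularity bookkeeping needed to apply it to the weak solution: verifying that $\rho$ is a genuine finite (and, for later use, continuous) function of $x$, that the inflow traces $f(0,\cdot)$, $f(1,\cdot)$ are well defined and match the boundary parts of $\mu$ from the probabilistic construction, and that the distributional ODE in $x$ really has the Duhamel formula as its unique solution given a bounded continuous forcing. Once these points are settled the representation follows immediately, and it is exactly the starting point for extracting the moment identities used in the $L^\infty$ bounds of the next subsection.
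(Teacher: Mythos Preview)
Your proposal is correct and is essentially the same integrating-factor/Duhamel argument the paper uses: the paper parametrises the characteristics by the time variable $t$ (computing $\partial_t(e^{t/\kappa}f(vt,v))$ and then changing back to $x=vt$), whereas you work directly in $x$, but the two computations are line-for-line equivalent. Your additional paragraph on the regularity bookkeeping (existence of traces, absolute continuity of $\mu$, distributional ODE) goes beyond what the paper actually writes --- the paper simply carries out the formal computation --- so in that respect you are being more careful rather than less.
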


\begin{proof}
We will use Duhamel's formula to get an exponential formulation for the equation:  let $v>0$

\[ \partial_t \left( e^{t/\kappa }f(vt,v)\right) =\frac{1}{\kappa} e^{t/\kappa } \rho(vt)\mathcal{M}_{T(vt)}(v).\]
Integrating this gives that
\[ e^{t/\kappa} f(vt,v) = f(0,v) + \frac{1}{\kappa}\int_0^t e^{s/\kappa} \rho(vs) \mathcal{M}_{T(vs)}(v) \mathrm{d}s. \] Now we write $x=vt$ and in the integral we make the change of variables $y=vs, \mathrm{d}y = v \mathrm{d}s$. This gives
\[ e^{x/\kappa v}f(x,v) = f(0,v) + \int_0^x e^{y/\kappa v} \frac{1}{\kappa v} \rho(y) \mathcal{M}_{T(y)}(v) \mathrm{d}y.  \]

Similarly, if $v<0$ we can write
\[ \partial_t \left( e^{t/\kappa} f(1+vt,v)\right) = \frac{1}{\kappa} e^{t/\kappa} \rho(1+vt) \mathcal{M}_{T(1+vt)}(v), \] again integrating this yields,
\[ e^{t/\kappa} f(1+vt,v) = f(1,v) +\frac{1}{\kappa} \int_0^t e^{s/\kappa} \rho(1+vs) \mathcal{M}_{T(1+vs)}(v) \mathrm{d}s. \] Now we make the change of variables $x=1+vt, y =1+vs$ this gives
\[ e^{(1-x)/\kappa|v|}f(x,v) = f(1,v) + \int_x^1 e^{(1-y)/\kappa |v|}\frac{1}{\kappa |v|} \rho(y) \mathcal{M}_{T(y)}(v) \mathrm{d}y. \]
\end{proof}

The above  lemma will give us a close form for the moments appearing in the boundary conditions. We start with the following definitions.
\begin{defn} \label{defn:C} We define the following moments
\begin{align}
C_- =& \frac{1}{\kappa}\int_0^1 \int_{v<0} e^{-y/\kappa |v|} \rho(y) \mathcal{M}_{T(y)}(v) \mathrm{d}v \mathrm{d}y,\\
C_+ =& \frac{1}{\kappa} \int_0^1 \int_{v>0} e^{-(1-y)/\kappa |v|} \rho(y) \mathcal{M}_{T(y)}(v) \mathrm{d}v \mathrm{d}y,\\
C_1=& \int_{v>0} |v| e^{-1/\kappa |v|} \widetilde{\mathcal{M}}_1(v) \mathrm{d}v <1,\\
C_2 =& \int_{v<0} |v| e^{-1/\kappa |v|} \widetilde{\mathcal{M}}_2(v)\mathrm{d}v <1.
\end{align}
\end{defn}

\begin{lemma}
The moments appearing in the boundary conditions can be written as
\begin{align*}
\int_{v<0} |v| f(0,v) \mathrm{d}v = \frac{1}{1-C_1C_2} \left( C_- + C_2 C_+ \right),\\
\int_{v>0} |v| f(1,v) \mathrm{d}v  = \frac{1}{1-C_1 C_2} \left( C_+ +C_1 C_- \right),
\end{align*} where the quantities $C_1,C_2,C_{-},C_+$ are as in the definition \ref{defn:C}.
\end{lemma}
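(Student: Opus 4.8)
The plan is to evaluate the two moments $\int_{v<0}|v|f(0,v)\,\mathrm{d}v$ and $\int_{v>0}|v|f(1,v)\,\mathrm{d}v$ by plugging the representation formula from the previous lemma into the boundary conditions \eqref{bc:left}–\eqref{bc:right}, and then solving the resulting $2\times 2$ linear system. For brevity write $m_0 := \int_{v<0}|v|f(0,v)\,\mathrm{d}v$ and $m_1 := \int_{v>0}|v|f(1,v)\,\mathrm{d}v$ for the two unknowns. The boundary condition \eqref{bc:left} says $f(0,v) = \widetilde{\mathcal{M}}_1(v)\,m_0$ for $v>0$, and \eqref{bc:right} says $f(1,v) = \widetilde{\mathcal{M}}_2(v)\,m_1$ for $v<0$.

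First I would compute $m_1 = \int_{v>0}|v|f(1,v)\,\mathrm{d}v$. Using the representation for $v>0$, namely
\[
f(1,v) = e^{-1/\kappa|v|}f(0,v) + \frac{1}{\kappa}\int_0^1 e^{-(1-y)/\kappa|v|}\frac{1}{|v|}\rho(y)\mathcal{M}_{T(y)}(v)\,\mathrm{d}y,
\]
and substituting $f(0,v) = \widetilde{\mathcal{M}}_1(v)\,m_0$ (valid since $v>0$), multiply by $|v|$ and integrate over $v>0$. The first term gives $m_0\int_{v>0}|v|e^{-1/\kappa|v|}\widetilde{\mathcal{M}}_1(v)\,\mathrm{d}v = C_1 m_0$; the second term gives exactly $C_+$ by Definition \ref{defn:C} (the $|v|$ cancels the $1/|v|$). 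Hence $m_1 = C_1 m_0 + C_+$. Symmetrically, using the representation for $v<0$ with $f(1,v) = \widetilde{\mathcal{M}}_2(v)\,m_1$, multiplying by $|v|$ and integrating over $v<0$, the boundary term contributes $C_2 m_1$ and the source term contributes $C_-$, giving $m_0 = C_2 m_1 + C_-$.

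It then remains to solve the system
\[
m_0 = C_- + C_2 m_1, \qquad m_1 = C_+ + C_1 m_0.
\]
Substituting one into the other yields $m_0 = C_- + C_2(C_+ + C_1 m_0)$, so $(1 - C_1 C_2)m_0 = C_- + C_2 C_+$, and since $C_1, C_2 < 1$ (again by Definition \ref{defn:C}) we have $1 - C_1 C_2 > 0$, giving $m_0 = (C_- + C_2 C_+)/(1 - C_1 C_2)$; analogously $m_1 = (C_+ + C_1 C_-)/(1 - C_1 C_2)$, which are the claimed formulas. The only genuinely delicate point is justifying that all the integrals converge — in particular that $\int_{v>0}|v|e^{-1/\kappa|v|}\widetilde{\mathcal{M}}_1(v)\,\mathrm{d}v$ and the double integrals $C_\pm$ are finite — so that Fubini applies when interchanging the $v$- and $y$-integrations; this is where the exponential factors $e^{-1/\kappa|v|}$ near $v=0$ and the Gaussian decay of the Maxwellians near $|v|=\infty$ do the work, and one should note that $\rho$ is bounded on $(0,1)$ for the steady solution under consideration. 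Everything else is bookkeeping with the definitions.
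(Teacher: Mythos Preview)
Your proof is correct and follows essentially the same approach as the paper: both use the representation formula from the previous lemma at the two boundary points, substitute the diffusive boundary conditions to express the outgoing traces in terms of $m_0$ and $m_1$, obtain the same $2\times 2$ linear system $m_0 = C_- + C_2 m_1$, $m_1 = C_+ + C_1 m_0$, and solve it. Your remark about the finiteness of the integrals and the use of Fubini is a nice addition that the paper leaves implicit.
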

\begin{proof}  We use the previous lemma iteratively to get
\begin{align} \label{eq: leftmom}
\int_{v<0}|v|f(0,v)\mathrm{d}v =& \int_{v<0} |v| \left( e^{-1/\kappa |v|}f(1,v) + \int_0^1 \frac{1}{\kappa|v|} e^{-y/\kappa |v|} \rho(y) \mathcal{M}_{T(y)}(v) \mathrm{d}y \right)\mathrm{d}v \\
=& \int_{v<0}|v| e^{-1/\kappa |v|} f(1,v) \mathrm{d}v + C_-  \notag \\
=& \int_{v<0} |v| e^{-1/\kappa |v|} \widetilde{\mathcal{M}}_2(v) \mathrm{d}v \int_{v'>0} |v'| f(1,v') \mathrm{d}v' + C_- \notag \\
=& C_2 \int_{v>0}|v| f(1,v) \mathrm{d}v +C_-. \notag
\end{align}
Similarly
\begin{align} \label{eq:rightmom }
\int_{v>0} |v| f(1,v) \mathrm{d}v =& \int_{v>0} \left( |v| e^{-1/\kappa |v|}f(0,v) + \frac{1}{\kappa}\int_0^1 e^{-(1-y)/\kappa |v|} \rho(y) \mathcal{M}_{T(y)}(v) \mathrm{d}y \right) \mathrm{d}v \\
=& \int_{v>0} |v| e^{-1/\kappa |v|} f(0,v) \mathrm{d}v + C_+ \notag \\
=& \int_{v>0} |v| e^{-1/\kappa |v|} \widetilde{\mathcal{M}}_1(v) \mathrm{d}v \int_{v'<0} |v'| f(0,v') \mathrm{d}v' + C_+ \notag \\
=& C_1 \int_{v<0} |v| f(0,v) \mathrm{d}v + C_+. \notag
\end{align} Substituting \eqref{eq:rightmom } into \eqref{eq: leftmom} gives  the result.
\end{proof}

Combining the previous two Lemmas we get easily the following representation of the solution as stated in the following Lemma. 
\begin{lemma} \label{lem: representation of sol}
\begin{align}
f(x,v) =& e^{-x/\kappa |v|}\widetilde{\mathcal{M}}_1(v) \frac{C_- + C_2 C_+}{1-C_1C_2} + \int_0^x \frac{1}{\kappa|v|}e^{-(x-y)/\kappa|v|}\rho(y) \mathcal{M}_{T(y)}(v) \mathrm{d}y \qquad v>0, \label{repres of f for v>0} \\
f(x,v) =& e^{-(1-x)/\kappa|v|} \widetilde{\mathcal{M}}_2(v) \frac{C_+ + C_1 C_-}{1-C_1 C_2} + \int_x^1 \frac{1}{\kappa |v|}e^{-(y-x)/\kappa |v|} \rho(y) \mathcal{M}_{T(y)}(v) \mathrm{d}y \qquad v<0. \label{repres of f for v<0}
\end{align}
\end{lemma}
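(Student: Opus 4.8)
The plan is to combine the three preceding lemmas by direct substitution, so the argument is purely algebraic. First I would recall the Duhamel representation established above: for $v>0$,
$$ f(x,v) = e^{-x/\kappa|v|} f(0,v) + \int_0^x e^{-(x-y)/\kappa|v|}\frac{1}{\kappa|v|}\rho(y)\mathcal{M}_{T(y)}(v)\,\mathrm{d}y, $$
and symmetrically for $v<0$ with $f(1,v)$ in place of $f(0,v)$ and the integral taken over $[x,1]$. The only quantities left to identify on the right-hand side are the outgoing traces $f(0,v)$ for $v>0$ and $f(1,v)$ for $v<0$.

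Next I would invoke the boundary conditions \eqref{bc:left}--\eqref{bc:right}, which express exactly these traces as $\widetilde{\mathcal{M}}_1(v)$ (resp.\ $\widetilde{\mathcal{M}}_2(v)$) times the incoming flux $\int_{v'<0}|v'|f(0,v')\,\mathrm{d}v'$ (resp.\ $\int_{v'>0}|v'|f(1,v')\,\mathrm{d}v'$). But these fluxes are precisely the quantities computed in the previous lemma, namely $\tfrac{1}{1-C_1C_2}(C_-+C_2C_+)$ and $\tfrac{1}{1-C_1C_2}(C_++C_1C_-)$ with $C_1,C_2,C_-,C_+$ as in Definition \ref{defn:C}. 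Plugging these closed-form expressions for the fluxes into the Duhamel formula immediately yields \eqref{repres of f for v>0} and \eqref{repres of f for v<0}.

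There is no genuine obstacle here; the one point worth keeping in mind is that the boundary conditions only prescribe $f$ on the outgoing part of the boundary ($v>0$ at $x=0$ and $v<0$ at $x=1$), but this is exactly the range of velocities for which the Duhamel formula requires a boundary trace, so the two cases close without extra bookkeeping. One should also note that $C_1,C_2<1$ (immediate from Definition \ref{defn:C}), hence $1-C_1C_2>0$ and all the expressions above are well defined.
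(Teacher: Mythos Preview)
Your proposal is correct and matches the paper's own proof, which simply states that the result follows by combining the previous two lemmas (the Duhamel representation and the closed-form expressions for the boundary fluxes). Your additional remark that $C_1,C_2<1$ ensures $1-C_1C_2>0$ is a useful sanity check, but otherwise the argument is identical.
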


We remind here that we are aiming for estimates on $\mathscr{F}(T)$ which is defined as following:  if $f$ is a solution to \eqref{eq:steady}-\eqref{bc:left}-\eqref{bc:right} with profile $T$ then
\[ \mathscr{F}(T)(x) = \frac{\int f(x,v)|v|^2 \mathrm{d}v}{\int f(x,v) \mathrm{d}v}. \] In particular, 
using the following definitions for the \textit{hydrodynamic moments} 
\begin{align*}
\rho_T(x) =& \int f(x,v) \mathrm{d}v,\\
\rho_T(x)u_T(x) =& \int f(x,v) v \mathrm{d}v \\
\rho_T(x)(\tau_T(x) + u_T(x)^2) =& P_T(x) = \int f(x,v) v^2 \mathrm{d}v,
\end{align*}
we would like to show that if $T(x) \in [T_1, T_2]$ then $\tau_T(x) \in [T_1,T_2]$.\\

Therefore we are interested in the scalings of the different quantities $\rho_T, P_T,\tau_T$ in terms of the temperatures $T_1,T_2,T(y) \rightarrow \infty$. These asymptotic behaviours are presented in the following series of Lemmas.  

\begin{lemma} \label{lem: est C_1,C_2}
As $T_1,T_2 \rightarrow \infty$ we have
\[ \frac{1}{1-C_1C_2} \sim \sqrt{\frac{2}{\pi}}\frac{\kappa\sqrt{T_1T_2}}{\sqrt{T_1}+\sqrt{T_2}}. \]
\end{lemma}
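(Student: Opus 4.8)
The plan is to compute the asymptotics of $C_1$ and $C_2$ individually as $T_1, T_2 \to \infty$, and then combine them. Recall from Definition \ref{defn:C} that
\[ C_1 = \int_{v>0} v\, e^{-1/\kappa v}\, \widetilde{\mathcal{M}}_1(v)\, \mathrm{d}v, \qquad \widetilde{\mathcal{M}}_1(v) = \frac{1}{T_1} e^{-v^2/2T_1}, \]
and similarly for $C_2$ with $T_2$. First I would rescale the velocity variable by setting $v = \sqrt{T_1}\, w$, so that
\[ C_1 = \int_{w>0} w\, e^{-1/(\kappa \sqrt{T_1} w)}\, e^{-w^2/2}\, \mathrm{d}w. \]
Since $\int_{w>0} w\, e^{-w^2/2}\,\mathrm{d}w = 1$, this is consistent with the normalisation $C_1 < 1$. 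The exponential factor $e^{-1/(\kappa\sqrt{T_1}w)}$ tends to $1$ pointwise as $T_1\to\infty$, but not uniformly near $w=0$; however, the weight $w\,e^{-w^2/2}$ vanishes there, so a dominated convergence / Laplace-type argument gives $C_1 \to 1$. To get the precise rate, I would write $1 - C_1 = \int_{w>0} w\, e^{-w^2/2} \left(1 - e^{-1/(\kappa\sqrt{T_1}w)}\right)\mathrm{d}w$ and analyse this integral. The dominant contribution comes from small $w$, where $1 - e^{-1/(\kappa\sqrt{T_1}w)} \approx 1$; balancing $1/(\kappa\sqrt{T_1}w) \sim 1$ suggests $w \sim 1/(\kappa\sqrt{T_1})$ is the relevant scale, which would give $1 - C_1 \sim c/(\kappa^2 T_1)$ for some explicit constant. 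I would make this rigorous by a further substitution $w = u/(\kappa\sqrt{T_1})$, reducing to $1-C_1 = \frac{1}{\kappa^2 T_1}\int_{u>0} u\, e^{-u^2/(2\kappa^2 T_1)}(1 - e^{-1/u})\,\mathrm{d}u$, and then letting $T_1\to\infty$ inside (monotone/dominated convergence) to get $1 - C_1 \sim \frac{1}{\kappa^2 T_1}\int_0^\infty u(1-e^{-1/u})\,\mathrm{d}u$. The remaining integral $\int_0^\infty u(1-e^{-1/u})\,\mathrm{d}u$ converges (it is $O(1/u)$ at infinity and $O(u)$ at zero) and equals a pure number; one checks $\int_0^\infty u(1-e^{-1/u})\,\mathrm{d}u = 1/2$ by substituting $u = 1/s$ and integrating by parts, which is what produces the $\sqrt{2/\pi}$-free clean constant.

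Having obtained $1 - C_i \sim \frac{\delta}{\kappa^2 T_i}$ (with $\delta$ the above explicit constant) for $i=1,2$, I would then combine: since $C_1, C_2 \to 1$,
\[ 1 - C_1 C_2 = (1-C_1) + C_1(1-C_2) \sim (1-C_1) + (1-C_2) \sim \frac{\delta}{\kappa^2}\left(\frac{1}{T_1} + \frac{1}{T_2}\right) = \frac{\delta}{\kappa^2}\cdot\frac{T_1+T_2}{T_1 T_2}. \]
Hence $\frac{1}{1-C_1C_2} \sim \frac{\kappa^2}{\delta}\cdot\frac{T_1 T_2}{T_1 + T_2}$. To match the stated form $\sqrt{2/\pi}\,\frac{\kappa\sqrt{T_1T_2}}{\sqrt{T_1}+\sqrt{T_2}}$ I would need to recheck the scaling in Definition \ref{defn:C} — in particular whether the decay rate in the exponential there is $e^{-1/\kappa|v|}$ (as written) or effectively $e^{-\sqrt{\cdot}/\kappa|v|}$ after accounting for how $\kappa$ enters, and whether $\widetilde{\mathcal{M}}_i$ carries a $\sqrt{T_i}$ or $T_i$ normalisation; the cleanest route is to redo the substitution keeping every constant and identify the limiting integral with the correct power of $T_i$.

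The main obstacle I anticipate is precisely getting the constant and the power of $T_i$ right: the stated answer scales like $\sqrt{T_1T_2}/(\sqrt{T_1}+\sqrt{T_2})$, i.e. like $(1-C_i)^{-1}$ behaving as $\kappa/\sqrt{T_i}$ rather than $\kappa^2/T_i$, which means the relevant small-$v$ scale must be $v \sim 1/\sqrt{T_i}$ rather than $1/(\kappa\sqrt{T_i})$ — so either the exponential weight $e^{-1/\kappa|v|}$ interacts with the Gaussian $e^{-v^2/2T_i}$ in such a way that the boundary region $|v| \lesssim 1/\sqrt{T_i}$ dominates, or there is a $\sqrt{\cdot}$ hidden in the normalisation. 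I would resolve this by carefully splitting the integral $1-C_i = \int_{v>0} v\,\widetilde{\mathcal{M}}_i(v)(1-e^{-1/\kappa v})\,\mathrm{d}v$ at $v = 1/\sqrt{T_i}$: on $v \lesssim 1/\sqrt{T_i}$ the factor $1-e^{-1/\kappa v} \approx 1$ and $\int_0^{1/\sqrt{T_i}} v\,\widetilde{\mathcal{M}}_i(v)\,\mathrm{d}v$ gives the leading term, while on $v \gtrsim 1/\sqrt{T_i}$ one Taylor-expands $1-e^{-1/\kappa v} \approx 1/\kappa v$ and checks the contribution is lower order. Tracking the Gaussian normalisation constant through this split should produce the $\sqrt{2/\pi}$ factor and the claimed asymptotic; the rest is the elementary algebraic combination of $C_1$ and $C_2$ above.
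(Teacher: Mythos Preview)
Your plan contains a genuine error that you partially sense but misdiagnose. After your substitution $w = u/(\kappa\sqrt{T_1})$ you arrive at
\[
1 - C_1 = \frac{1}{\kappa^2 T_1}\int_0^\infty u\, e^{-u^2/(2\kappa^2 T_1)}\bigl(1 - e^{-1/u}\bigr)\,\mathrm{d}u,
\]
and you then claim the limiting integral $\int_0^\infty u(1-e^{-1/u})\,\mathrm{d}u$ converges because the integrand is ``$O(1/u)$ at infinity''. This is false: $1 - e^{-1/u} \sim 1/u$ as $u\to\infty$, so $u(1-e^{-1/u}) \to 1$, and the integral diverges. That divergence is precisely what upgrades the scaling from the $1/(\kappa^2 T_1)$ you predicted to the correct $1/(\kappa\sqrt{T_1})$: the Gaussian cutoff $e^{-u^2/(2\kappa^2 T_1)}$ supplies an effective upper limit $u \sim \kappa\sqrt{T_1}$, and the integral grows like $\kappa\sqrt{T_1}$, not like a constant. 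Your final paragraph inverts the situation again: the region $v \gtrsim 1/\sqrt{T_i}$ (equivalently $w = O(1)$), where one Taylor expands, is the \emph{leading} contribution, not the lower-order one.

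The quickest fix --- and the paper's approach --- is to stay in the rescaled variable $w$ and use the elementary bound $1 - e^{-x} \le x$ directly:
\[
1 - C_1 = \int_0^\infty w\, e^{-w^2/2}\bigl(1 - e^{-1/(\kappa\sqrt{T_1}w)}\bigr)\,\mathrm{d}w
\le \frac{1}{\kappa\sqrt{T_1}} \int_0^\infty e^{-w^2/2}\,\mathrm{d}w
= \frac{1}{\kappa}\sqrt{\frac{\pi}{2T_1}}.
\]
The factor of $w$ in the weight exactly cancels the $1/w$ from the Taylor expansion, so no splitting is needed for the upper bound. A matching lower bound follows from $1 - e^{-x} \ge x - x^2/2$ (with some care near $w=0$, which the paper handles by a cutoff and optimisation). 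This gives $1 - C_i \sim \frac{1}{\kappa}\sqrt{\pi/(2T_i)}$, and then your combination $1 - C_1C_2 \sim (1-C_1) + (1-C_2)$ is correct and yields the stated asymptotic with the $\sqrt{2/\pi}$ constant.
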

\begin{proof}
Let us write $D_1 = 1-C_1$ and compute that
\[ C_1 = \sqrt{\frac{2\pi}{T_1}}\int_0^\infty v e^{-1/\kappa v} (2\pi T_1)^{-1/2} e^{-|v|^2/2T_1}\mathrm{d}v = \int_0^\infty ue^{-1/(\kappa \sqrt{T_1}u)} e^{-u^2/2}\mathrm{d}u. \] Therefore, 
\[ D_1 = \int_0^\infty u \left(1-e^{-1/(\kappa \sqrt{T_1}u)} \right) e^{-u^2/2} \mathrm{d}u. \]
 We can straightforwardly bound this above to get
\[ D_1 \leq \int_0^\infty \frac{1}{\kappa\sqrt{T_1}} e^{-u^2/2} \mathrm{d}u = \frac{1}{\kappa}\sqrt{\frac{\pi}{2T_1}}. \] 
In order to bound $D_1$ below, first Taylor expanding gives us:
\[ e^{-1/(\kappa \sqrt{T_1}u)} \leq \max \left\{1, 1-\frac{1}{\kappa\sqrt{T_1}u} + \frac{1}{2\kappa^2 T_1u^2} \right\}. \] Therefore,
\[ u \left( 1- e^{-1/(\kappa \sqrt{T_1}u)} \right) \geq \frac{1}{\kappa \sqrt{T_1}} \max \left\{0, 1- \frac{1}{2\kappa \sqrt{T_1}u} \right\}. \]
So that for any $\alpha \in (0,1)$
\begin{align*}
D_1 \geq \frac{1}{\kappa \sqrt{T_1}} \int_0^\infty \max \left\{0, 1- \frac{1}{2 \kappa \sqrt{T_1}u} \right\} e^{-u^2/2} \mathrm{d} u =& \frac{1}{\kappa \sqrt{T_1}} \int_{\frac{1}{2\kappa \sqrt{T_1}}}^\infty \left(1-\frac{1}{2\kappa \sqrt{T_1}u}\right) e^{-u^2/2} \mathrm{d}u\\
\geq & \frac{1}{\kappa \sqrt{T_1}} \int_{1/(2\kappa \sqrt{T_1}\alpha)}^\infty \left(1-\frac{1}{2\kappa \sqrt{T_1} u}\right) e^{-u^2/2} \\
\geq &\frac{1}{\kappa \sqrt{T_1}} (1-\alpha) \left(2 \sqrt{\pi} - \frac{1}{2\alpha \kappa \sqrt{T_1}} \right) \\
=&  2 \frac{1}{\kappa}\sqrt{\frac{\pi}{T_1}} - \frac{1}{2\alpha \kappa^2 T_1} - 2\alpha \frac{1}{\kappa}\sqrt{\frac{\pi}{T_1}} + \frac{1}{2\kappa^2 T_1}.
\end{align*} Optimising over $\alpha$, for $\kappa^2 T_1> 1/2\pi$,  gives
\[ D_1 \geq 2\frac{1}{\kappa}\sqrt{\frac{\pi}{T_1}} + 
\frac{1}{2\kappa^2 T_1} - 
2\left(\frac{\pi}{\kappa^6 T_1^3} \right)^{1/4}. \]
Symmetrically we find that for $T_2> 1/2\pi$, 
\[ 2\frac{1}{\kappa}\sqrt{\frac{\pi}{T_2}} + 
\frac{1}{2\kappa^2T_2} - 
2\left(\frac{\pi}{\kappa^6 T_2^3}\right)^{1/4} \leq D_2 \leq \sqrt{\frac{\pi}{2\kappa^2 T_2}}.  \]
We can rewrite
\[ 1-C_1C_2 = D_1 + D_2 - D_1D_2. \] 
Lets write 
\[ E_i := - \frac{1}{\kappa}\frac{1}{\sqrt{2\pi T_i}} + \left(\frac{8}{\pi\kappa^2 T_i} \right)^{1/4} \sim A\kappa^{-1/2}T_i^{-1/4} \]Therefore our upper and lower bounds give
\[ 1-C_1C_2 \leq \sqrt{\frac{\pi}{2}} \left( \frac{1}{\kappa \sqrt{T_1}} + \frac{1}{\kappa \sqrt{T_2}} - \sqrt{\frac{\pi}{2\kappa^4 T_1T_2}} (1-E_1)(1-E_2)\right). \] and
\[ 1-C_1C_2 \geq  \sqrt{\frac{\pi}{2}} \left(\frac{1}{\kappa\sqrt{T_1}}(1-E_1) + \frac{1}{\kappa\sqrt{T_2}}(1-E_2) - \sqrt{\frac{\pi}{2\kappa^4T_1T_2}} \right). \]
Therefore we have 
\[ 1- K_1 \leq \frac{1-C_1C_2}{\sqrt{\frac{\pi}{2}}\left( \frac{1}{\kappa\sqrt{T_1}} + \frac{1}{\kappa\sqrt{T_2}}\right)} \leq 1, \] where
\[ K_1 = \frac{\kappa\sqrt{T_2}E_1 + \kappa\sqrt{T_1}E_2 +1}{\kappa\sqrt{T_1}+\kappa\sqrt{T_2}} \leq E_1 + E_2 + \frac{1}{\kappa(\sqrt{T_1} + \sqrt{T_2})}.  \]
So we end up with
\[ \sqrt{\frac{2}{\pi}} \frac{\kappa\sqrt{T_1 T_2}}{\sqrt{T_1}+\sqrt{T_2}} \leq \frac{1}{1-C_1C_2} \leq \sqrt{\frac{2}{\pi}} \frac{\kappa\sqrt{T_1 T_2}}{\sqrt{T_1}+\sqrt{T_2}} \frac{1}{1 - E_1 -E_2 -1/(\kappa(\sqrt{T_1}+\sqrt{T_2}))}. \] We can also straightforwardly check that if $\kappa^2 T_1 \geq \gamma_2$, for some constant $\gamma_2>0$,  then we can use the approximation $1/(1-z) \leq 1+z$ to get that
\[  \frac{1}{1 - E_1 -E_2 -1/\kappa(\sqrt{T_1}+\sqrt{T_2})} \leq 1 + 2 \left(\frac{8}{\pi \kappa^2 T_1} \right)^{1/4}+2 \left(\frac{8}{\pi\kappa^2 T_2} \right)^{1/4}. \]
\end{proof}

\begin{lemma}
If $T_1 \leq T(y)$ then we have that 
\[ \frac{1}{\kappa} \left(1-2\left(\frac{2}{\pi \kappa^2 T_1} \right)^{1/4}\right) \leq 2C_-, 2C_+ \leq \frac{1}{\kappa}. \]
\end{lemma}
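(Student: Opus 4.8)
The plan is to reduce the statement to two elementary facts. First, the steady state $f$ selected in Proposition~\ref{prop:welldefined} is the law of the stationary stochastic process of Definition~\ref{stochastic process}, which by Lemma~\ref{lem: uniq of f for linear model} is a unique invariant probability measure on $(0,1)\times\mathbb{R}$; since the grazing set $\{0,1\}\times\mathbb{R}$ carries no mass, this gives $\int_0^1\rho(y)\,\mathrm{d}y=1$. Second, $\mathcal{M}_{T(y)}$ is a centred Gaussian, so $\int_{v<0}\mathcal{M}_{T(y)}(v)\,\mathrm{d}v=\int_{v>0}\mathcal{M}_{T(y)}(v)\,\mathrm{d}v=\tfrac12$ and $\mathcal{M}_{T(y)}(v)\le(2\pi T(y))^{-1/2}$ for all $v$.

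The upper bounds are immediate: since $0\le e^{-y/\kappa|v|}\le1$ and $\rho,\mathcal{M}_{T(y)}\ge0$,
\[
2C_-\ \le\ \frac{2}{\kappa}\int_0^1\int_{v<0}\rho(y)\,\mathcal{M}_{T(y)}(v)\,\mathrm{d}v\,\mathrm{d}y\ =\ \frac{1}{\kappa}\int_0^1\rho(y)\,\mathrm{d}y\ =\ \frac1\kappa,
\]
and identically $2C_+\le\frac1\kappa$.

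For the lower bound I would fix $y\in(0,1)$, look at the inner integral $I_-(y):=\int_{v<0}e^{-y/\kappa|v|}\mathcal{M}_{T(y)}(v)\,\mathrm{d}v$, and insert a cut-off $\delta>0$, keeping only the velocities with $|v|>\delta$. On that set $e^{-y/\kappa|v|}\ge e^{-1/(\kappa\delta)}\ge 1-\tfrac1{\kappa\delta}$ (because $y\le1$), while $\int_{v<-\delta}\mathcal{M}_{T(y)}(v)\,\mathrm{d}v\ge\tfrac12-\delta(2\pi T(y))^{-1/2}\ge\tfrac12-\delta(2\pi T_1)^{-1/2}$ since $T(y)\ge T_1$. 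Multiplying these two lower bounds and discarding a nonnegative cross term gives, for every $\delta>0$,
\[
I_-(y)\ \ge\ \frac12-\frac{\delta}{\sqrt{2\pi T_1}}-\frac{1}{2\kappa\delta}.
\]
Choosing $\delta=\big(\sqrt{2\pi T_1}/(2\kappa)\big)^{1/2}$, so that $\tfrac{\delta}{\sqrt{2\pi T_1}}+\tfrac{1}{2\kappa\delta}=\big(\tfrac{2}{\pi\kappa^2T_1}\big)^{1/4}$, yields $I_-(y)\ge\tfrac12-\big(\tfrac{2}{\pi\kappa^2T_1}\big)^{1/4}$, uniformly in $y$. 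Multiplying by $\tfrac2\kappa\rho(y)$, integrating over $y\in(0,1)$ and using $\int_0^1\rho(y)\,\mathrm{d}y=1$ gives exactly $2C_-\ge\tfrac1\kappa\big(1-2(\tfrac{2}{\pi\kappa^2T_1})^{1/4}\big)$. The estimate for $C_+$ is obtained in the same way: one replaces $y$ by $1-y$ in the exponent, so that $e^{-(1-y)/\kappa|v|}\ge 1-\tfrac1{\kappa\delta}$ on $\{|v|>\delta\}$, and integrates over $v>0$; the Gaussian tail bound $\int_{v>\delta}\mathcal{M}_{T(y)}(v)\,\mathrm{d}v\ge\tfrac12-\delta(2\pi T_1)^{-1/2}$ is unchanged by the sign of $v$.

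The argument is entirely elementary and I do not expect a serious obstacle. The one genuinely important choice is the cut-off $\delta$, which is responsible for the rate $(\kappa^2T_1)^{-1/4}$ rather than the cruder $(\kappa^2T_1)^{-1/2}$ one would obtain without truncating the small velocities; everything else is bookkeeping. Regarding the sign conditions used when multiplying the two lower bounds: since $C_\pm\ge0$, the claimed lower bound is trivial unless its right-hand side is positive, and in that regime $\kappa^2T_1$ is large enough that, at the chosen $\delta$, both $1-\tfrac1{\kappa\delta}$ and $\tfrac12-\tfrac{\delta}{\sqrt{2\pi T_1}}$ are nonnegative, so the product estimate leading to the displayed bound on $I_-(y)$ is legitimate.
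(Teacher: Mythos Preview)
Your proof is correct and follows essentially the same strategy as the paper: both arguments split the velocity integral at a cut-off, use the trivial bound on one piece and a Taylor/convexity bound on the other, and then optimise the cut-off to produce the $(\kappa^2 T_1)^{-1/4}$ rate. The only cosmetic difference is that the paper works with the complement $1-e^{-y/\kappa|v|}$ (bounding it above) after rescaling to $\mathcal{M}_1$, with a $y$- and $T(y)$-dependent splitting point, whereas you bound $e^{-y/\kappa|v|}$ from below directly with a fixed cut-off $\delta$ and invoke $y\le1$ at the outset; your handling of the sign issue at the end is also slightly more careful than the paper's.
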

\begin{proof}
We just show this for $C_-$, the proof for $C_+$ is almost identical.
\[ \kappa C_{-} = \int_0^1 \rho(y) \int_{v<0} e^{-y/\kappa|v|}\mathcal{M}_{T(y)}(v) \mathrm{d}v \mathrm{d}y. \] The bound $e^{-y/\kappa|v|} \leq 1$ gives us the upper bound immediately. 

For the lower bound we look at $D(y) =1-C_-$, and wish to bound this above.
\[ D = \int_0^1 \rho(y) \int_{v<0} (1-e^{-y/\kappa|v|}) \mathcal{M}_{T(y)}(v) \mathrm{d}v \mathrm{d}y.\] We look at the integral first in $v$ and change variables
\[ \int_{v<0} (1-e^{-y/\kappa|v|}) \mathcal{M}_{T(y)}(v) \mathrm{d}v  = \int_0^\infty \left(1-e^{-y/(\kappa\sqrt{T(y)}v)}\right) \mathcal{M}_1(v) \mathrm{d}v. \] For any $\alpha \in (0,1)$ we use the bounds
\[ 1-e^{-y/(\kappa\sqrt{T(y)}v)} \leq 1, \hspace{5pt} |v| \leq \frac{y}{\alpha \kappa\sqrt{T(y)}}, \] and
\[ 1-e^{-y/(\kappa\sqrt{T(y)}v)} \leq \alpha, \hspace{5pt} |v| > \frac{y}{\alpha \kappa\sqrt{T(y)}}. \] This gives us
\[  \int_{v<0} (1-e^{-y/\kappa|v|}) \mathcal{M}_{T(y)}(v) \mathrm{d}v  \leq \frac{y}{\alpha \sqrt{2\pi \kappa T(y)}} + \frac{\alpha}{2}. \] We optimise over $\alpha$ to get
\[  \int_{v<0} (1-e^{-y/\kappa|v|}) \mathcal{M}_{T(y)}(v) \mathrm{d}v  \leq 2 \left( \frac{2y^2}{\pi \kappa^2T(y)} \right)^{1/4} \leq 2 \left(\frac{2}{\pi\kappa^2 T_1} \right)^{1/4}. \] We then use the fact that $\rho$ integrates to one to conclude.
\end{proof}

Combining the above two lemmas gives us the scaling of the quantity appearing in the first term of the  representation \eqref{repres of f for v>0}. Note that similar calculations will give same scaling regarding \eqref{repres of f for v<0}.

\begin{lemma} \label{lem: scal of term from bc} We have 
\begin{align} 
F_1(T_1,T_2) & \sqrt{\frac{2}{\pi}}\frac{\sqrt{T_1 T_2}}{\sqrt{T_1}+\sqrt{T_2}} \leq \frac{C_-+C_2 C_+}{1-C_1C_2} 
\leq  F_2(T_1,T_2)\sqrt{\frac{2}{\pi}} \frac{\sqrt{T_1 T_2}}{\sqrt{T_1}+\sqrt{T_2}} \notag
\end{align} where
 
\begin{align} \label{definition F_1}
F_1(T_1,T_2):= 1+ \left(\frac{\pi}{2\kappa^6T_1^3}\right)^{1/4} - 2 \left(\frac{2}{\pi \kappa^2 T_1} \right)^{1/4} -  \frac{1}{2} \sqrt{ \frac{\pi}{2\kappa^2 T_1}}  
\end{align}
and
\begin{align}  \label{definit F_2}
F_2(T_1,T_2) :=1+ \frac{1}{4\kappa^2 T_2}- \sqrt{\frac{\pi}{\kappa^2 T_2}} - \left(\frac{\pi}{\kappa^6 T_2^3}\right)^{1/4}. 
\end{align}
\end{lemma}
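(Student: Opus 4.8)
The plan is to estimate the numerator $C_-+C_2C_+$ and the denominator $1-C_1C_2$ of the quotient separately and then multiply. The denominator is already under control: Lemma~\ref{lem: est C_1,C_2} gives
\[
\sqrt{\tfrac{2}{\pi}}\,\frac{\kappa\sqrt{T_1T_2}}{\sqrt{T_1}+\sqrt{T_2}}\ \le\ \frac{1}{1-C_1C_2}\ \le\ \sqrt{\tfrac{2}{\pi}}\,\frac{\kappa\sqrt{T_1T_2}}{\sqrt{T_1}+\sqrt{T_2}}\Bigl(1+2\bigl(\tfrac{8}{\pi\kappa^2T_1}\bigr)^{1/4}+2\bigl(\tfrac{8}{\pi\kappa^2T_2}\bigr)^{1/4}\Bigr),
\]
the last inequality being valid under Condition~\ref{condition}(C1), where the bound $1/(1-z)\le 1+z$ was used. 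So the whole problem reduces to sandwiching $C_-+C_2C_+$ between multiples of $1/\kappa$, after which the prefactor $\sqrt{2/\pi}\,\sqrt{T_1T_2}/(\sqrt{T_1}+\sqrt{T_2})$ simply pops out.

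For the numerator I would feed in the two preceding lemmas: $\tfrac{1}{2\kappa}\bigl(1-2(\tfrac{2}{\pi\kappa^2T_1})^{1/4}\bigr)\le C_-,C_+\le\tfrac1{2\kappa}$, and $C_2=1-D_2$ with the two-sided control of $D_2$ obtained inside the proof of Lemma~\ref{lem: est C_1,C_2} (the crude $D_2\le\sqrt{\pi/(2\kappa^2T_2)}$ and the refined $D_2\ge 2\kappa^{-1}\sqrt{\pi/T_2}+\tfrac1{2\kappa^2T_2}-2(\pi/(\kappa^6T_2^3))^{1/4}$). For the lower bound on the numerator, write $C_-+C_2C_+\ge(1+C_2)\min(C_-,C_+)$ and insert $C_2\ge 1-\sqrt{\pi/(2\kappa^2T_2)}$ together with the lower bound on $\min(C_-,C_+)$; crucially, when this is multiplied by the \emph{clean} lower bound $\tfrac1{1-C_1C_2}\ge\sqrt{2/\pi}\,\kappa\sqrt{T_1T_2}/(\sqrt{T_1}+\sqrt{T_2})$ one should \emph{keep} the cross term in the product $(1-a)(1-b)=1-a-b+ab$, since it is exactly the positive $ab$ piece, of order $\kappa^{-3/2}T_1^{-3/4}$, that reproduces the $(\pi/(2\kappa^6T_1^3))^{1/4}$ term in $F_1$ (after, if one wishes, weakening $T_2$ to $T_1$ in the subleading terms, which is legitimate in a lower bound since $T_1\le T_2$ by Condition~\ref{condition}(C2)). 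For the upper bound, use $C_-+C_2C_+\le\tfrac1{2\kappa}(1+C_2)\le\tfrac1\kappa\bigl(1-\tfrac12 D_2^{\mathrm{low}}\bigr)$ and multiply by the upper bound on $1/(1-C_1C_2)$ quoted above; expanding and regrouping the error terms of the four orders $\kappa^{-1/2}T_i^{-1/4}$, $\kappa^{-1}T_i^{-1/2}$, $\kappa^{-3/2}T_i^{-3/4}$, $\kappa^{-2}T_i^{-1}$ gives a factor of the form prescribed by $F_2$ in \eqref{definit F_2}.

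The work here is almost entirely bookkeeping, and that is also where the only real difficulty lies: one must track the four scales of error terms in both $T_1$ and $T_2$ through the products, and at each step verify that the quantities being inverted are positive — namely $1-C_1C_2$, the correction factor $1-E_1-E_2-1/(\kappa(\sqrt{T_1}+\sqrt{T_2}))$ from Lemma~\ref{lem: est C_1,C_2}, and the refined lower bound $D_2^{\mathrm{low}}$. Each of these positivity requirements amounts to $\kappa^2 T_1$ (equivalently $\kappa^2 T_2$, since $T_1\le T_2$) being larger than an absolute constant, which is exactly what Condition~\ref{condition}(C1) guarantees once $\gamma_2$ is taken large enough. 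With positivity in hand, the $1/(1-z)\le 1+z$ and $(1-a)(1-b)\ge 1-a-b$ manipulations are routine, and collecting the resulting error terms yields the stated $F_1$ and $F_2$.
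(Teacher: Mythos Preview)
Your approach is precisely the paper's: the proof there is the single sentence ``We just put together the previous two Lemmas,'' i.e.\ multiply the two-sided estimate on $1/(1-C_1C_2)$ from Lemma~\ref{lem: est C_1,C_2} by the two-sided estimate on $C_\pm$ (together with $C_2=1-D_2$) from the lemma that follows it. Your proposal simply spells out the bookkeeping that the paper leaves implicit.
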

\begin{proof}
We just put together the previous two Lemmas.
\end{proof}

We would now like to get a sense of the different quantities using these results. Let us start with the pressure $P_T$.  
\begin{lemma}  \label{lemm: bounds on pressure}
We have for all $x \in (0,1)$,
\begin{align}
 G_1(T_1,T_2) \sqrt{T_1T_2} \leq P_T(x) \leq  G_2(T_1,T_2) \sqrt{T_1T_2},
\end{align} 
where 
$$ G_1(T_1,T_2) = F_1(T_1,T_2)\left(1- \sqrt{\frac{2}{\pi}} \frac{1}{\kappa(\sqrt{T_1}+\sqrt{T_2})} \right), $$ 
$$ G_2(T_1,T_2) = F_2(T_1,T_2) + \sqrt{\frac{1}{2\pi \kappa^2T_1}} $$
and $F_1,F_2$ are functions of the temperatures $T_1,T_2$ and they are defined in Lemma \ref{lem: scal of term from bc}. 
In particular, $$  \sqrt{T_1T_2}\lesssim P_T(x) \lesssim  \sqrt{T_1T_2}.$$
\end{lemma}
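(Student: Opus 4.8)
The pressure is $P_T(x) = \int f(x,v) v^2 \, \mathrm{d}v$, so I would split it using the representation of $f$ in Lemma \ref{lem: representation of sol} into the contribution from the boundary terms (those involving $\widetilde{\mathcal{M}}_1, \widetilde{\mathcal{M}}_2$ and the combinations $\frac{C_- + C_2 C_+}{1-C_1 C_2}$ and $\frac{C_+ + C_1 C_-}{1-C_1 C_2}$) and the contribution from the Duhamel integral terms involving $\rho(y) \mathcal{M}_{T(y)}(v)$. Concretely,
\begin{align*}
P_T(x) = &\ \frac{C_- + C_2 C_+}{1-C_1 C_2} \int_{v>0} v^2 e^{-x/\kappa|v|} \widetilde{\mathcal{M}}_1(v)\, \mathrm{d}v + \frac{C_+ + C_1 C_-}{1-C_1 C_2} \int_{v<0} v^2 e^{-(1-x)/\kappa|v|}\widetilde{\mathcal{M}}_2(v) \, \mathrm{d}v \\
 &\ + \int_{v>0} \int_0^x \frac{v^2}{\kappa|v|} e^{-(x-y)/\kappa|v|} \rho(y) \mathcal{M}_{T(y)}(v)\, \mathrm{d}y\, \mathrm{d}v + \int_{v<0}\int_x^1 \frac{v^2}{\kappa|v|} e^{-(y-x)/\kappa|v|}\rho(y)\mathcal{M}_{T(y)}(v)\,\mathrm{d}y\,\mathrm{d}v.
\end{align*}

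\textbf{Key steps.} First I would estimate the boundary contributions. The prefactors $\frac{C_- + C_2 C_+}{1-C_1 C_2}$ (and its symmetric counterpart) are controlled in Lemma \ref{lem: scal of term from bc}, so they are each between $F_1$ and $F_2$ times $\sqrt{\tfrac{2}{\pi}}\tfrac{\sqrt{T_1T_2}}{\sqrt{T_1}+\sqrt{T_2}}$. For the velocity integrals $\int_{v>0} v^2 e^{-x/\kappa|v|}\widetilde{\mathcal{M}}_1(v)\,\mathrm{d}v$, I would use $e^{-x/\kappa|v|}\le 1$ for the upper bound, giving $\int_{v>0} v^2 \widetilde{\mathcal{M}}_1(v)\,\mathrm{d}v$; since $\widetilde{\mathcal{M}}_1(v) = T_1^{-1} e^{-v^2/2T_1}$, a Gaussian moment computation gives this equals $\sqrt{\tfrac{\pi}{2}}\sqrt{T_1}$ (up to the normalisation fixed by \eqref{eq: normalis}). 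For the lower bound I would restrict the region of integration away from $v=0$ and bound $e^{-x/\kappa|v|}$ from below, with the loss quantified by a term of order $\kappa^{-2}T_1^{-1}$ or smaller, which is where the extra $\sqrt{\tfrac{1}{2\pi\kappa^2 T_1}}$ and $\sqrt{\tfrac{2}{\pi}}\tfrac{1}{\kappa(\sqrt{T_1}+\sqrt{T_2})}$ correction factors in $G_1, G_2$ come from. Multiplying the prefactor estimate by the velocity-integral estimate, the two boundary contributions together should produce a main term proportional to $\sqrt{T_1 T_2}\cdot\tfrac{\sqrt{T_1}+\sqrt{T_2}}{\sqrt{T_1}+\sqrt{T_2}} = \sqrt{T_1 T_2}$, with multiplicative corrections assembling into $G_1$ and $G_2$.

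\textbf{The Duhamel terms.} For the integral terms I would use the crude bound $e^{-(x-y)/\kappa|v|}\le 1$ and $\int_0^x (\cdots)\mathrm{d}y \le \int_0^1 \rho(y)(\cdots)\mathrm{d}y$, together with $\int_0^1 \rho(y)\,\mathrm{d}y$ being controlled — here I would need the density bound, which at this stage of the argument I would either take from a preceding estimate or establish by integrating the representation of $f$ in $v$ (the total mass of the stationary state is normalised to one, as the mass-conservation lemma indicates). Using $\mathcal{M}_{T(y)}(v)\le (2\pi T_1)^{-1/2}$ and $T(y)\le T_2$, the $v$-integral $\int \frac{|v|}{\kappa}\mathcal{M}_{T(y)}(v)\,\mathrm{d}v$ is of order $\sqrt{T_2}/\kappa$, so these terms contribute at most something of order $\sqrt{T_2}/\kappa$, which is lower order compared to $\sqrt{T_1 T_2}$ under Condition \ref{condition} (this is exactly what (C1) $\kappa^2 T_1 > \gamma_2$ buys us) and can be absorbed into the $G_2$ correction; for the lower bound these terms are non-negative and simply dropped.

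\textbf{Main obstacle.} The bookkeeping of the correction factors is the delicate part: I must track how the multiplicative errors from (i) the prefactor estimate in Lemma \ref{lem: scal of term from bc}, (ii) the truncation near $v=0$ in the Gaussian velocity moments, and (iii) the contribution of the Duhamel integral terms combine, and verify that they fit inside the stated $G_1, G_2$. Since $G_1, G_2 \to 1$ as $T_1, T_2 \to \infty$, the final qualitative statement $\sqrt{T_1 T_2} \lesssim P_T(x) \lesssim \sqrt{T_1 T_2}$ follows once one checks $G_1$ is bounded below and $G_2$ above by positive constants depending only on $T_1, T_2$ through Condition \ref{condition}; the genuinely new analytic input beyond Lemma \ref{lem: scal of term from bc} is just the elementary Gaussian moment computation and its truncated version, so I expect no serious difficulty beyond careful constant-tracking.
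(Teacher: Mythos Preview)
Your approach is essentially correct and uses the same decomposition as the paper. The one ingredient you miss, which the paper exploits to streamline the computation, is the observation that $P_T(x)$ is \emph{constant} in $x$: integrating the steady-state equation against $1$ gives $\partial_x(\rho_T u_T)=0$, and the diffusive boundary conditions force $\rho_T u_T=0$ at both endpoints, so $u_T\equiv 0$; then integrating against $v$ yields $\partial_x P_T = -\tfrac{1}{\kappa}\rho_T u_T = 0$. With constancy in hand the paper simply evaluates $P_T$ at $x=1$, where the $v<0$ Duhamel integral $\int_1^1(\cdots)\,\mathrm{d}y$ vanishes and the $v<0$ boundary exponential is $e^{0}=1$, so only one Duhamel integral and the unweighted moment $\int_{v<0}v^2\widetilde{\mathcal{M}}_2\,\mathrm{d}v = \sqrt{\pi T_2/2}$ remain to be estimated.

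Your route of working at a generic $x$, bounding $e^{-(\cdot)/\kappa|v|}\le 1$ for the upper bound and dropping the nonnegative Duhamel terms for the lower bound, would still go through and give the same leading order $\sqrt{T_1T_2}$; but you would have to minimise both $x$-dependent boundary exponentials simultaneously (they attain their minima at opposite endpoints), which costs you a slightly worse constant in the $G_1$ correction. The constancy argument is the cleaner way to get the exact $G_1,G_2$ as stated, and it is reused later (e.g.\ in the H\"older-continuity proof for $\tau$), so it is worth isolating at this stage.
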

\begin{proof} Let us first note that the pressure is constant in $x$. Indeed, from the equation we can easilty see that $$\partial_x(\rho(x) u_T(x)) = 0$$ and from the boundary conditions we have $$\rho(0)u_T(0) = \rho(1)u_T(1) =0,$$ hence $u_T(x) =0$. Since we have that $\partial_x P_T(x) = - \frac{1}{\kappa}\rho(x) u(x) =0$, we know that $P_T(x)$ is constant.\\

Now in order to quantify it in terms of the temperatures,  we need two further quantities:
\[ \int_{v>0} |v|^2 \widetilde{\mathcal{M}}_i(v) \mathrm{d}v =\sqrt{\frac{\pi T_i}{2}}, \] which is straightforward to compute. We also show that
\[ \sqrt{\frac{T_1}{2\pi}} - \frac{1}{2\kappa} \leq \int_0^1 \rho(y) \int_0^\infty |v|e^{-y/\kappa |v|} \mathcal{M}_{T(y)}(v) \mathrm{d}v \mathrm{d}y \leq \sqrt{\frac{T_2}{2\pi}}. \] The upper bound comes from bounding $e^{-y/\kappa |v|}$ by one, the lower bound comes from bounding it below by $1-y/\kappa |v|$. 

More precisely, for $v$ positive, using the representation of the solution in \eqref{repres of f for v>0} for the upper bound we write
$$  \int_0^\infty |v|^2 f(x,v)dv  =  \int_0^{\infty} v^2 e^{-x/\kappa v} \widetilde{\mathcal{M}}_1(v) dv \left( \frac{C_{-}+C_2C_+}{1-C_1C_2}\right) + \frac{1}{\kappa}\int_0^{\infty} \int_0^x \rho(y)|v| e^{-(x-y)/\kappa v}\mathcal{M}_{T(y)}(v) dydv$$ and since the pressure is constant, for $x=1$, the above quantity is bounded above by $$\int_0^{\infty} |v|^2 f(1,v)dv \le \sqrt{T_1} \frac{\sqrt{T_1T_2}}{\sqrt{T_1}+\sqrt{T_2}} F_2(T_1,T_2)+  \sqrt{\frac{T_2}{2\pi}}. $$ while the lower bound similarly is found to be  $$ \left( \sqrt{\frac{\pi T_1}{2}}-1 \right)\sqrt{\frac{2}{\pi}} \frac{\kappa \sqrt{T_1T_2}}{\sqrt{T_1}+\sqrt{T_2}} F_1(T_1,T_2)  $$
\\
For $v$ negative, using the representation \eqref{repres of f for v<0}, we have
$$ \int_{-\infty}^0|v|^2f(1,v)\mathrm{d}v =  \int_{-\infty}^0|v|^2 \widetilde{\mathcal{M}}_2(v) \frac{C_++C_1C_-}{1-C_1C_2}\mathrm{d}v. $$ Therefore we can bound it above by,
$$ \sqrt{T_2} \frac{\sqrt{T_1T_2}}{\sqrt{T_1}+\sqrt{T_2}} F_2(T_1,T_2) $$ and below by,
$$ \sqrt{T_2} \frac{\sqrt{T_1T_2}}{\sqrt{T_1}+\sqrt{T_2}} F_1(T_1,T_2). $$
Summing over positive and negative velocities gives us that
$$ P_T(x) = P_T(1) \leq  \sqrt{T_1T_2} \left(F_2(T_1,T_2) + \sqrt{\frac{1}{2\pi \kappa^2 T_1}} \right).  $$ Similarly, we get the lower bound,
$$ P_T(x)= P_T(1) \geq  \sqrt{T_1T_2} F_1(T_1,T_2)\left(1- \sqrt{\frac{2}{\pi}} \frac{1}{\kappa(\sqrt{T_1} + \sqrt{T_2})} \right) $$

\end{proof}

The following Lemma concerns the asymptotics of the density $\rho$.

\begin{lemma} \label{lem: bounds on rho}
We have, uniformly in $x$,
\[ 1- \gamma_0 \kappa^{-1/2}T_1^{-1/4} \leq \rho_T(x) \leq 1+ \gamma_1 \kappa^{-1/2}T_1^{-1/4}. \]
for some constants $\gamma_0, \gamma_1$. 
\end{lemma}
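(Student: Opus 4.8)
The plan is to insert the representation of Lemma~\ref{lem: representation of sol} into $\rho_T(x)=\int_{\mathbb R} f(x,v)\,\mathrm dv$, split the velocity integral into $v>0$ and $v<0$, and write
\[
\rho_T(x)=A(x)+B(x),
\]
where
\[
A(x)= \frac{C_-+C_2C_+}{1-C_1C_2}\int_{0}^{\infty} e^{-x/\kappa v}\,\widetilde{\mathcal M}_1(v)\,\mathrm dv
+\frac{C_++C_1C_-}{1-C_1C_2}\int_{0}^{\infty} e^{-(1-x)/\kappa v}\,\widetilde{\mathcal M}_2(v)\,\mathrm dv
\]
is the contribution of particles travelling straight from a wall, and $B(x)$ is the sum of the two collisional (gain) integrals, coming from $\int_0^x\cdots\,\mathrm dy$ for $v>0$ and from $\int_x^1\cdots\,\mathrm dy$ for $v<0$. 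Everything must be estimated \emph{uniformly} in $x\in(0,1)$, since, unlike $P_T$, the density is not constant in $x$.

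First I would estimate $A(x)$. By Lemma~\ref{lem: scal of term from bc} and its mirror computation, both prefactors equal $\sqrt{2/\pi}\,\tfrac{\sqrt{T_1T_2}}{\sqrt{T_1}+\sqrt{T_2}}$ up to a relative error $O(\kappa^{-1/2}T_1^{-1/4})$. Since $\widetilde{\mathcal M}_1(v)=T_1^{-1}e^{-v^2/2T_1}$, the substitution $v=\sqrt{T_1}\,u$ gives $\int_0^\infty e^{-x/\kappa v}\widetilde{\mathcal M}_1(v)\,\mathrm dv=T_1^{-1/2}\int_0^\infty e^{-x/(\kappa\sqrt{T_1}u)}e^{-u^2/2}\,\mathrm du$, and a split‑and‑optimise argument as in Lemma~\ref{lem: est C_1,C_2} (equivalently, the rescaled version of the bound on $1-C_-$ established just after it) shows that $\int_0^\infty\bigl(1-e^{-x/(\kappa\sqrt{T_1}u)}\bigr)e^{-u^2/2}\,\mathrm du\lesssim\kappa^{-1/2}T_1^{-1/4}$ uniformly in $x\in(0,1)$; hence this velocity integral equals $\sqrt{\pi/2}\,T_1^{-1/2}$ up to $O(\kappa^{-1/2}T_1^{-3/4})$, and similarly for the $v<0$ term with $T_2\ge T_1$. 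Multiplying and using $\tfrac{\sqrt{T_1T_2}}{\sqrt{T_1}+\sqrt{T_2}}\le\sqrt{T_1}$ (we have $T_1\le T_2$ by Condition~\ref{condition}), the two pieces combine to $\tfrac{\sqrt{T_2}}{\sqrt{T_1}+\sqrt{T_2}}+\tfrac{\sqrt{T_1}}{\sqrt{T_1}+\sqrt{T_2}}=1$ with total error $O(\kappa^{-1/2}T_1^{-1/4})$, so $A(x)=1+O(\kappa^{-1/2}T_1^{-1/4})$ uniformly in $x$; the one‑sided estimates of Lemma~\ref{lem: scal of term from bc} moreover give $1-\gamma_0\kappa^{-1/2}T_1^{-1/4}\le A(x)\le 1+c\,\kappa^{-1/2}T_1^{-1/4}$.

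Next I would bound the gain term $B(x)$. Writing $a$ for the distance travelled since the last collision, $B(x)=\int_0^x\rho(x-a)\,h\bigl(a;T(x-a)\bigr)\,\mathrm da+\int_0^{1-x}\rho(x+a)\,h\bigl(a;T(x+a)\bigr)\,\mathrm da$ with the kernel $h(a;T_0)=\int_0^\infty\mathcal M_{T_0}(v)\,\tfrac1{\kappa v}e^{-a/\kappa v}\,\mathrm dv$. The key estimate is the pointwise bound
\[
h(a;T_0)\le C\,(a\,\kappa\sqrt{T_0})^{-1/2}
\]
for an absolute constant $C$: split the $v$-integral at a level $v_*$, bound $\tfrac1{\kappa v}e^{-a/\kappa v}\le\tfrac1{ea}$ together with $\mathcal M_{T_0}(v)\le(2\pi T_0)^{-1/2}$ on $\{v<v_*\}$, bound $e^{-a/\kappa v}\le1$ together with $\tfrac1{\kappa v}\le\tfrac1{\kappa v_*}$ on $\{v>v_*\}$, and optimise over $v_*$. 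Since $T(y)\ge T_1$ for every $y$, this yields $\int_0^1\sup_{T_0\ge T_1}h(a;T_0)\,\mathrm da\le C'\kappa^{-1/2}T_1^{-1/4}$, and therefore
\[
0\le B(x)\le C''\Bigl(\sup_{z\in(0,1)}\rho_T(z)\Bigr)\,\kappa^{-1/2}T_1^{-1/4}.
\]

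Finally I would close a self-consistency (bootstrap) argument. Combining the two steps, $\rho_T(x)\le 1+c\,\kappa^{-1/2}T_1^{-1/4}+C''\kappa^{-1/2}T_1^{-1/4}\sup_z\rho_T(z)$; taking the supremum over $x$ and choosing $\gamma_2$ in Condition~\ref{condition}(C1) large enough that $C''\kappa^{-1/2}T_1^{-1/4}\le C''\gamma_2^{-1/4}\le\tfrac12$ (and likewise $c\,\kappa^{-1/2}T_1^{-1/4}\le\tfrac12$), we get $\sup_z\rho_T(z)\le 2\bigl(1+c\,\kappa^{-1/2}T_1^{-1/4}\bigr)\le 3$; re-inserting this into the inequality above yields the upper bound $\rho_T(x)\le 1+\gamma_1\kappa^{-1/2}T_1^{-1/4}$, while the lower bound follows at once from $B(x)\ge0$ and $A(x)\ge 1-\gamma_0\kappa^{-1/2}T_1^{-1/4}$. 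The step I expect to be the main obstacle is the uniform control of $B(x)$: the kernel $h(\,\cdot\,;T_0)$ is not bounded but only integrably singular as $a\to0$, of order $a^{-1/2}$, so one cannot simply factor $\rho$ out against a single velocity and integrate; this is precisely why the estimate has to be organised as a bootstrap on $\sup_x\rho_T(x)$ (which is finite, being the spatial density of the stationary solution), and why Condition~\ref{condition}(C1) with a sufficiently large constant is needed in order for the gain term to be absorbed.
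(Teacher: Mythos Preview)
Your proposal is correct and follows essentially the same route as the paper: the decomposition $\rho_T=A+B$ into wall and gain contributions, the split--and--optimise estimate on the kernel $h(a;T_0)\lesssim (a\kappa\sqrt{T_0})^{-1/2}$ (which the paper writes as $\int_0^\infty \tfrac{1}{\kappa v}e^{-(x-y)/\kappa v}\mathcal M_{T(y)}(v)\,\mathrm dv\le e^{-1/2}(2/\pi\kappa^2 T(y))^{1/4}(x-y)^{-1/2}$), the bootstrap on $\sup_x\rho_T$ to absorb $B$, and the lower bound obtained by discarding $B\ge0$ and keeping only the wall term. Your treatment of $A(x)$ is in fact slightly more explicit than the paper's, which simply bounds $e^{-x/\kappa v}\le1$ for the upper estimate and then separately handles the lower bound, but the substance is the same.
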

\begin{proof}
 Looking at the formulae \eqref{repres of f for v>0} and \eqref{repres of f for v<0},  we have
 \begin{align*}
     \int_0^\infty f(x,v) \mathrm{d}v &= \int_{v>0} e^{-x/\kappa v} \widetilde{\mathcal{M}}_1(v) \frac{C_{-}+C_2C_{+}}{1-C_1C_2} + \int_0^x \rho(y) \int_0^\infty \frac{1}{\kappa v} e^{-(x-y)/\kappa v}\mathcal{M}_{T(y)}(v)\mathrm{d}v \mathrm{d}y \\ &\le
F_2(T_1,T_2) \frac{ \sqrt{T_1T_2}}{\sqrt{T_1}+\sqrt{T_2}} \int_{v>0} \widetilde{\mathcal{M}}_1(v) \mathrm{d}v + \int_0^x \rho(y) \int_0^\infty \frac{1}{\kappa v} e^{-(x-y)/\kappa v}\mathcal{M}_{T(y)}(v)\mathrm{d}v \mathrm{d}y \\ &:= I_1 + I_2
 \end{align*} where we remind that $F_2$ is given by \eqref{definit F_2}. 
 For $I_1$ applying  the above estimates we get $$ I_1 \le F_2(T_1,T_2)\sqrt{\frac{\pi}{2}} \frac{ \sqrt{T_2}}{\sqrt{T_1}+\sqrt{T_2}}. $$
 For the second term $I_2$: first we notice that 
\[ \frac{1}{ z} e^{-(x-y)/z} \leq  \max \left\{ \frac{ e^{-1}}{x-y}, \frac{1}{ z} \right\}. \] Therefore
\begin{align*}
 \int_0^\infty \frac{1}{\kappa v} e^{-(x-y)/\kappa v} \mathcal{M}_{T(y)}(v) \mathrm{d}v =& \int_0^\infty \frac{1}{\kappa \sqrt{T(y)}v} e^{-(x-y)/\kappa\sqrt{T(y)} v} \mathcal{M}_1(v) \mathrm{d}v \\
\leq & \int_0^a e^{-1}\frac{1}{x-y} \mathcal{M}_1(v) \mathrm{d}v + \int_a^ \infty \frac{1}{a\kappa \sqrt{T(y)}} \mathcal{M}_1(v) \mathrm{d}v \\
\leq & \frac{a }{e^1(x-y)\sqrt{2\pi}} + \frac{1}{2a\kappa\sqrt{T(y)}}.
\end{align*} Optimising over $a$ gives that 
\[\int_0^\infty \frac{1}{\kappa v} e^{-(x-y)/\kappa v} \mathcal{M}_{T(y)}(v) \mathrm{d}v  \leq e^{-1/2} \left( \frac{2}{\pi \kappa^2 T(y)} \right)^{1/4}\sqrt{\frac{1}{x-y}}.  \]

Therefore,
\[ I_2 = \int_0^x  \rho(y) \int_0^\infty \frac{1}{v} e^{-(x-y)/v}\mathcal{M}_{T(y)}(v)\mathrm{d}v \mathrm{d}y \leq 2 e^{-1/2} \left( \frac{2}{\pi \kappa^2 T_1} \right)^{1/4}\|\rho\|_\infty  \]

We can do the same thing for negative $v$ and put it together to get that
\begin{align*}  \|\rho\|_\infty &\leq 2e^{-1/2}\left(\frac{2}{\pi\kappa^2 T_1} \right)^{1/4}\|\rho\|_\infty + F_2 .
\end{align*} Rearranging gives
\[ \|\rho\|_\infty \left( 1 - \gamma_1 \kappa^{-1/2} T_1^{-1/4}\right) \leq  F_2 . \] Hence,
\[ \|\rho \|_\infty \leq   F_2(1-\gamma_1 \kappa^{-1/2}T_1^{-1/4})^{-1}. \]

For a lower bound on $\rho$ we can completely ignore the term where we integrate in $y$ in the formula \eqref{repres of f for v>0}. So we just need to bound below terms like
\[ \int_0^\infty e^{-1/\kappa v} \widetilde{\mathcal{M}}_1(v) \mathrm{d}v. \] We have already treated terms of this type we bound the integrand below by $1-\alpha$ for $v \geq 1/\alpha\sqrt{T_1}$ and optimise over $\alpha$ to get
\[  \int_0^\infty e^{-1/\kappa v} \widetilde{\mathcal{M}}_1(v) \mathrm{d}v \geq \sqrt{\frac{\pi}{2 \kappa^2 T_1}} \left( 1 - 2\left( \frac{2}{\pi\kappa^2 T_1} \right)^{1/4} \right).  \] 
Therefore, we write 
\begin{align*}
 \int_{v>0} f(x,v) dv  &\gtrsim \sqrt{\frac{\pi}{2 \kappa^2 T_1}} \left( 1 - 2\left( \frac{2}{\pi\kappa^2 T_1} \right)^{1/4} \right) \frac{\sqrt{T_1T_2}}{\sqrt{T_1}+ \sqrt{T_2}} F_1(T_1,T_2)  \end{align*}
Summing over positive and negative $v$ gives
\[ \rho(x)\geq 1- \gamma_0\kappa^{-1/2} T_1^{-1/4}.\]

\end{proof}

Now given the scalings in terms of the temperatures for $P_T(x)$ and $\rho_T(x)$ and the fact that $P_T(x)= \rho_T(x)\tau_T(x)$ for every $x$ we have that
\begin{lemma} \label{lem: bounds on tau}
We have that for all $x \in (0,1)$, asymptotically with $T_1$, 
\[ \sqrt{T_1T_2}(1-  \gamma_1\kappa^{-1/2}T_1^{-1/4}) \lesssim \tau_T(x) \lesssim  \sqrt{T_1T_2}(1 +\gamma_0 \kappa^{-1/2}T_1^{-1/4}). \] 
\end{lemma}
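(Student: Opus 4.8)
The plan is to combine the two preceding lemmas through the pointwise identity $\tau_T(x) = P_T(x)/\rho_T(x)$. This identity is available because in the proof of Lemma~\ref{lemm: bounds on pressure} we already established $u_T(x)\equiv 0$, so that $P_T(x) = \rho_T(x)(\tau_T(x)+u_T(x)^2) = \rho_T(x)\tau_T(x)$; note also that this makes $\mathscr{F}(T)(x) = P_T(x)/\rho_T(x)$ coincide with $\tau_T(x)$. Lemma~\ref{lemm: bounds on pressure} gives $G_1(T_1,T_2)\sqrt{T_1T_2} \le P_T(x) \le G_2(T_1,T_2)\sqrt{T_1T_2}$ and Lemma~\ref{lem: bounds on rho} gives $1-\gamma_0\kappa^{-1/2}T_1^{-1/4} \le \rho_T(x) \le 1+\gamma_1\kappa^{-1/2}T_1^{-1/4}$, both uniformly in $x$. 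Dividing the pressure chain by the appropriate endpoint of the density chain yields
\[ \frac{G_1(T_1,T_2)\sqrt{T_1T_2}}{1+\gamma_1\kappa^{-1/2}T_1^{-1/4}} \le \tau_T(x) \le \frac{G_2(T_1,T_2)\sqrt{T_1T_2}}{1-\gamma_0\kappa^{-1/2}T_1^{-1/4}}, \]
and it remains to bring the two quotients into the advertised form.

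The next step is to expand the denominators and insert the structure of $G_1,G_2$. Under Condition~\ref{condition}, in particular $\kappa^2T_1\ge\gamma_2$, the quantity $\gamma_0\kappa^{-1/2}T_1^{-1/4}$ is bounded strictly below $1$, so $1/(1-\gamma_0\kappa^{-1/2}T_1^{-1/4}) \le 1 + C\kappa^{-1/2}T_1^{-1/4}$ for a constant $C=C(\gamma_0,\gamma_2)$, and similarly $1/(1+\gamma_1\kappa^{-1/2}T_1^{-1/4}) \ge 1 - \gamma_1\kappa^{-1/2}T_1^{-1/4}$. From the definitions of $F_1,F_2$ in Lemma~\ref{lem: scal of term from bc} and of $G_1,G_2$ in Lemma~\ref{lemm: bounds on pressure}, each of $G_1,G_2$ equals $1$ plus an error that is $O(\kappa^{-1/2}T_1^{-1/4})$: here one uses that $\sqrt{T_2}\ge\sqrt{T_1}$ by (C2), so every occurrence of $T_2^{-1/4}$ and of $(\kappa(\sqrt{T_1}+\sqrt{T_2}))^{-1}$ in those formulae is dominated, up to a constant, by $\kappa^{-1/2}T_1^{-1/4}$. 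Multiplying out $(1+O(\kappa^{-1/2}T_1^{-1/4}))(1+O(\kappa^{-1/2}T_1^{-1/4}))$, absorbing the resulting $O(\kappa^{-1}T_1^{-1/2})$ terms into the first-order term, and relabelling the constants $\gamma_0,\gamma_1$ (now depending only on the already-fixed constants in Condition~\ref{condition}), one obtains $\tau_T(x)\le\sqrt{T_1T_2}(1+\gamma_0\kappa^{-1/2}T_1^{-1/4})$ and $\tau_T(x)\ge\sqrt{T_1T_2}(1-\gamma_1\kappa^{-1/2}T_1^{-1/4})$, uniformly in $x$.

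There is no genuine analytic obstacle here; the argument is bookkeeping, and the only point requiring care is the consistency of error scales. Concretely, one must check that every correction term arising along the way — those inside $F_1,F_2$ (which also carry powers like $\kappa^{-3/2}T_i^{-3/4}$ and $\kappa^{-1}T_i^{-1/2}$), the correction $K_1$ to $1/(1-C_1C_2)$ from Lemma~\ref{lem: est C_1,C_2}, and the correction terms in the bounds for $\rho_T$ — is, under Condition~\ref{condition}, bounded by a fixed constant multiple of $\kappa^{-1/2}T_1^{-1/4}$. This is precisely where the hypotheses $\kappa^2T_1\ge\gamma_2$ and $\sqrt{T_2}-\sqrt{T_1}\ge\gamma_1\kappa^{1/2}T_2^{1/4}$ enter, the latter ensuring $T_1\le T_2$ so that the whole bound can be phrased in terms of $T_1$ alone. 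Once this uniform domination is recorded, the asymptotic $\tau_T(x)\sim\sqrt{T_1T_2}$ and the stated two-sided bound follow at once.
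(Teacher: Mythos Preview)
Your proposal is correct and follows exactly the approach the paper takes: the paper's proof is the single sentence ``This is simply a matter of piecing together the previous lemmas,'' and you have supplied precisely that piecing-together via $\tau_T=P_T/\rho_T$ with the bounds from Lemmas~\ref{lemm: bounds on pressure} and~\ref{lem: bounds on rho}. Your additional bookkeeping of the error scales is more explicit than what the paper writes, but the strategy is identical.
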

\begin{proof}
This is simply a matter of piecing together the previous lemmas.
\end{proof}


\begin{proof}[Proof of Proposition \ref{prop:linftybounds}]
This follows immediately from the previous lemma, since from the the second item of condition \ref{condition}, \hyperref[con2]{(C2)}, we indeed have that  $$ \sqrt{T_1T_2}(1 +\gamma_0 \kappa^{-1/2}T_1^{-1/4}) \le T_2 $$ and $$  \sqrt{T_1T_2}(1-  \gamma_1 \kappa^{-1/2}T_1^{-1/4}) \ge T_1. $$
\end{proof}

\subsection{H\"older continuity of $\mathscr{F}(T)(x)$.} \label{subsection: Holder continuity}

In this Section we show H\"older continuity of order $1/2$ for the map $\mathscr{F}(T)= \tau$. This will allow us to use Schauder fixed point theorem, see Theorem \ref{Theorem: Schauder},  to get the desired fixed point for $\mathcal{F}$. Again in this section the precise constants do not matter for the final result so we work with $\kappa =1$.
\begin{prop} \label{prop: Holder cont for tau}
If $T(x) \in [T_1,T_2]$ then there exists a constant $C(T_1,T_2)$ such that
\[ |\tau(x_1) - \tau(x_2)| \leq C(T_1,T_2)\sqrt{|x_1-x_2|}. \]
\end{prop}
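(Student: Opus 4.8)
The plan is to work from the explicit representation of the solution given in Lemma~\ref{lem: representation of sol}, since $\tau(x) = P_T(x)/\rho_T(x)$ and the pressure $P_T$ is constant in $x$ (as shown in Lemma~\ref{lemm: bounds on pressure}). Consequently $|\tau(x_1)-\tau(x_2)| = P_T \left| \rho_T(x_1)^{-1} - \rho_T(x_2)^{-1}\right|$, and because $\rho_T$ is bounded above and below by positive constants depending only on $T_1,T_2$ (Lemma~\ref{lem: bounds on rho}), it suffices to prove that $x \mapsto \rho_T(x)$ is $1/2$-H\"older with constant depending only on $T_1,T_2$. So the whole problem reduces to a H\"older estimate on $\rho_T$.

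To estimate $|\rho_T(x_1) - \rho_T(x_2)|$ I would split $\rho_T(x) = \int_{v>0} f(x,v)\,\mathrm{d}v + \int_{v<0} f(x,v)\,\mathrm{d}v$ and use \eqref{repres of f for v>0}–\eqref{repres of f for v<0}. For the $v>0$ part, $f(x,v)$ is a sum of a boundary term $e^{-x/|v|}\widetilde{\mathcal{M}}_1(v)\cdot\frac{C_-+C_2C_+}{1-C_1C_2}$ and a Duhamel integral $\int_0^x \frac{1}{|v|}e^{-(x-y)/|v|}\rho(y)\mathcal{M}_{T(y)}(v)\,\mathrm{d}y$. The boundary term, after integrating in $v$, is $\frac{C_-+C_2C_+}{1-C_1C_2}\int_0^\infty e^{-x/v}\widetilde{\mathcal{M}}_1(v)\,\mathrm{d}v$; the prefactor is a constant bounded in terms of $T_1,T_2$ by Lemma~\ref{lem: scal of term from bc}, so I only need $\left|\int_0^\infty (e^{-x_1/v}-e^{-x_2/v})\widetilde{\mathcal{M}}_1(v)\,\mathrm{d}v\right| \le C\sqrt{|x_1-x_2|}$. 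Here the mechanism is the standard one: $|e^{-a/v}-e^{-b/v}| \le |e^{-a/v}-e^{-b/v}|^{1/2}\cdot 1^{1/2}$ combined with $|e^{-a/v}-e^{-b/v}| \le \frac{|a-b|}{v}e^{-\min(a,b)/v}$, and then $\int_0^\infty v^{-1/2} \widetilde{\mathcal{M}}_1(v)^{1/2}\cdot(\dots)\,\mathrm{d}v$ converges because $v^{-1/2}$ is integrable at $0$ against the Gaussian-type weight; one can also just bound $\sup_v \sqrt{v}\,|e^{-a/v}-e^{-b/v}| \lesssim \sqrt{|a-b|}$ directly by calculus and pull it out. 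Either way this term is $\mathcal{O}(\sqrt{|x_1-x_2|})$.

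For the Duhamel term the difference, for $x_1<x_2$, naturally splits as $\int_{x_1}^{x_2}(\dots) + \int_0^{x_1}\big(e^{-(x_2-y)/|v|}-e^{-(x_1-y)/|v|}\big)\frac{1}{|v|}\rho(y)\mathcal{M}_{T(y)}(v)\,\mathrm{d}y$, and after integrating in $v$ one uses the key pointwise bound established in the proof of Lemma~\ref{lem: bounds on rho}, namely $\int_0^\infty \frac{1}{v}e^{-(x-y)/v}\mathcal{M}_{T(y)}(v)\,\mathrm{d}v \le e^{-1/2}\big(\tfrac{2}{\pi T(y)}\big)^{1/4}(x-y)^{-1/2}$, together with $\|\rho\|_\infty \le C(T_1,T_2)$. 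The first piece is then $\lesssim \int_{x_1}^{x_2}(x_2-y)^{-1/2}\,\mathrm{d}y = 2\sqrt{x_2-x_1}$, which is exactly the $1/2$-H\"older scaling. The second piece requires an estimate of the form $\int_0^\infty \frac{1}{v}\big(e^{-(x_1-y)/v}-e^{-(x_2-y)/v}\big)\mathcal{M}_{T(y)}(v)\,\mathrm{d}v$, which one bounds by interpolating between the $(x-y)^{-1/2}$ bound above and a $\sqrt{|x_2-x_1|}$ gain coming from the difference of exponentials (again via $|e^{-a/v}-e^{-b/v}|\le \frac{|a-b|}{v}$ and optimizing the split point in $v$), yielding something like $\lesssim \sqrt{|x_2-x_1|}\,(x_1-y)^{-1}$ up to constants, whose $y$-integral over $(0,x_1)$ would diverge — so more carefully one keeps the interpolation so that the $y$-singularity stays integrable, e.g. bounding by $|x_2-x_1|^{1/2}(x_1-y)^{-1/2}$ times a constant, giving a convergent integral and the claimed $\sqrt{|x_1-x_2|}$ bound. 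The same analysis applied to the $v<0$ half (using \eqref{repres of f for v<0} and symmetry) finishes the estimate.

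The main obstacle I anticipate is handling the near-diagonal behavior of the Duhamel integrals cleanly: the integrand has an integrable but genuine singularity as $y\to x$, and one must choose the interpolation exponent in the difference-of-exponentials bound so that (i) one extracts a full power $|x_1-x_2|^{1/2}$ and (ii) the remaining $y$-integral still converges. Getting the bookkeeping right so that both requirements hold simultaneously — rather than ending up with a logarithmically divergent integral or only a $|x_1-x_2|^{1/2-\epsilon}$ rate — is the delicate point; everything else (constancy of $P_T$, two-sided bounds on $\rho_T$, the Gaussian moment computations) is routine given the lemmas already proved.
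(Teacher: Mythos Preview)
Your overall strategy matches the paper exactly: reduce $|\tau(x_1)-\tau(x_2)|$ to $|\rho_T(x_1)-\rho_T(x_2)|$ via $\tau = P/\rho$ with $P$ constant and $\rho$ bounded above and below, then split $\rho$ into boundary and Duhamel contributions using Lemma~\ref{lem: representation of sol}. Your treatment of the boundary term $I_1$ and of the ``short'' piece $\int_{x_1}^{x_2}(\dots)$ of the Duhamel term is essentially the same as the paper's.

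The one point of divergence is the ``long'' Duhamel piece $\int_0^{x_1}\int_{v>0}\frac{1}{v}\rho(y)\mathcal{M}_{T(y)}(v)\big(e^{-(x_1-y)/v}-e^{-(x_2-y)/v}\big)\,\mathrm{d}v\,\mathrm{d}y$. You propose integrating in $v$ first and then interpolating to keep the $y$-singularity integrable; this does work if you use $(1-e^{-\delta/v})\le (\delta/v)^{1/2}$, since then $\int_0^\infty v^{-3/2}e^{-a/v}\mathcal{M}_{T(y)}(v)\,\mathrm{d}v\le C\,a^{-1/2}$ and the $y$-integral converges. But the paper avoids this bookkeeping entirely by swapping the order of integration: after bounding $\rho(y)\le\|\rho\|_\infty$ and $\mathcal{M}_{T(y)}(v)\le \sqrt{T_2/T_1}\,\mathcal{M}_{T_2}(v)$, it integrates in $y$ first, which telescopes exactly to
\[
\int_{v>0}\mathcal{M}_{T_2}(v)\bigl(1-e^{-x_1/v}\bigr)\bigl(1-e^{-(x_2-x_1)/v}\bigr)\,\mathrm{d}v
\;\le\;\int_{v>0}\mathcal{M}_{T_2}(v)\bigl(1-e^{-(x_2-x_1)/v}\bigr)\,\mathrm{d}v,
\]
and the last integral is $\le C\sqrt{x_2-x_1}$ by the same split-and-optimise argument used for $I_1$. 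This sidesteps the interpolation issue you flagged; your anticipated obstacle is real for the $v$-first order but disappears with the $y$-first order.
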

\begin{proof} For $x_1 \le x_2$ in $(0,1)$ we write 
\begin{align} \label{eq: Holder tau1}  |\tau_T(x_1) - \tau_T(x_2)| &= \left\vert \frac{P}{\rho_T(x_1)}  -   \frac{P}{\rho_T(x_2)} \right\vert   = |P| \left\vert  \frac{\rho_T(x_2) - \rho_T(x_1)}{\rho_T(x_1)\rho_T(x_2)} \right\vert \\ & \le C(T_1,T_2) |\rho_T(x_2) - \rho_T(x_1)|  \notag
\end{align}
where $P = P_T(x)$ is the constant pressure we got from Lemma \ref{lemm: bounds on pressure} and $C(T_1,T_2)$ a constant that depends only on the two temperatures and comes from the upper bound on $P$, Lemma \ref{lemm: bounds on pressure},  and the, uniform in $x$, lower bound on the density as well, see Lemma \ref{lem: bounds on rho}. \\
Thus, in order to conlude we need to prove H\"older continuity for $\rho(x)$. We need to estimate: 

\[ \int_0^\infty (f(x_2,v))-f(x_1,v)) \mathrm{d}v. \] We can split this into two terms
\[ I_1 = \frac{C_- + C_2 C_+}{1-C_1C_2} \sqrt{\frac{2\pi}{T_1}} \int_0^\infty e^{-x_2/v}\left(1-e^{-(x_1-x_2)/v}\right) \mathcal{M}_{T_1}(v) \mathrm{d}v, \] and
\[ I_2 = \int_0^{x_2} \frac{1}{|v|}e^{-(x_2-y)/|v|}\rho(y) \mathcal{M}_{T(y)}(v) \mathrm{d}y - \int_0^{x_1} \frac{1}{|v|}e^{-(x_1-y)/|v|}\rho(y) \mathcal{M}_{T(y)}(v) \mathrm{d}y. \] 
In order to bound $I_1$ from above, we use Lemma \ref{lem: scal of term from bc} to write
\[ I_1 \leq \theta(T_1,T_2) \int_0^\infty \left(1-e^{-(x_2-x_1)/v}\right) \mathcal{M}_{T_1}(v)\mathrm{d}v  \] for some constant $ \theta(T_1,T_2)$. 
Now proceding as in the proofs of the Lemmas in the previous subsection, we split into $v \leq (x_2-x_1)/\alpha$ and bound the integrand by $1$ for small $v$ and by $\alpha$ for large $v$: 
\[ I_1 \lesssim \int_0^{(x_2-x_1)/\alpha} \mathcal{M}_{T_1}(v) \mathrm{d}v + \alpha \int_{(x_2-x_1)/\alpha}^\infty \mathcal{M}_{T_1}(v) \mathrm{d}v \lesssim \frac{x_1-x_1}{\alpha \sqrt{2\pi T_1}} + \frac{\alpha}{2}. \] 
Optimising over $\alpha$ gives
\[ I_1 \leq \theta'(T_1,T_2) \sqrt{(x_2-x_1)}. \] Here the constant $\theta'(T_1,T_2)$ depends only on $T_1,T_2$.
Now we turn to $I_2$. We can rewrite it as 
\[ I_2 = I_3 + I_4, \] with 
\[ I_3 = \int_0^{x_1}\int_{v>0} \frac{1}{v} \rho(y) \mathcal{M}_{T(y)}(v) \left( e^{-(x_2-y)/v}-e^{-(x_1-y)/v} \right) \mathrm{d}v \mathrm{d}y, \]
and
\[ I_4 = \int_{x_1}^{x_2} \int_{v>0} \frac{1}{v} \rho(y) \mathcal{M}_{T(y)}(v) e^{-(x_2-y)/v} \mathrm{d}v \mathrm{d}y. \] Looking first at $I_4$ we show that
\[ I_4 \leq \|\rho\|_\infty \sqrt{\frac{T_2}{T_1}} \int_{x_1}^{x_2} \frac{1}{v} e^{-(x_2-y)/v} \mathcal{M}_{T_2}(v) \mathrm{d}v \mathrm{d}y. \] Integrating in $y$ gives
\[ I_4 \leq Const. \int_{v>0} \left( 1- e^{-(x_2-x_1)/v} \right) \mathcal{M}_{T_2}(v) \mathrm{d}v \leq Const. \sqrt{x_2-x_1}. \]
Now,
\[ I_3 \leq \|\rho\|_\infty \sqrt{\frac{T_2}{T_1}} \int_0^{x_1} \int_{v>0} \frac{1}{v} \mathcal{M}_{T_2}(v) e^{-(x_1-y)/v} \left( 1- e^{-(x_2-x_1)/v}\right) \mathrm{d}v \mathrm{d}y. \] Integrating this in $y$ gives
\[ I_3 \leq \|\rho\|_\infty \sqrt{\frac{T_2}{T_1}} \int_{v>0} \mathcal{M}_{T_2}(v) \left(1-e^{-x_1/v}\right) \left( 1- e^{-(x_2-x_1)/v} \right) \mathrm{d}v. \] We can bound this by
\[ I_3 \leq Const. \int_{v>0} \mathcal{M}_{T_2}(v) \left( 1- e^{-(x_2-x_1)/v}\right) \mathrm{d}v \leq Const. \sqrt{x_2-x_1}. \] 
So we can repeat this for $v<0$ to get
\[ |\rho(x_2) - \rho(x_1)| \leq C(T_1, T_2) \sqrt{x_2-x_1}. \] 
This gives uniform H\"older continuity for $\rho$. Now we get H\"{o}lder continuity for the $\tau(x)$ by combining this with \eqref{eq: Holder tau1}: 
$$ |\tau_T(x_1) - \tau_T(x_2)|  \le  C(T_1, T_2) \sqrt{x_2-x_1}.$$
\end{proof}


\subsection{Continuity of the map $\mathscr{F}$} \label{subsection: contin of F}
 In this subsection we are going to prove the continuity of the map $\mathcal{F}$ which is the second main ingredient in order to apply Schauder's fixed point Theorem.  Here we continue to work with $\kappa=1$.
\begin{prop}\label{prop:ctyF}
The map $\mathscr{F}$ is continuous from $C((0,1))$ to $C((0,1))$ with the $L^\infty$ norm.
\end{prop}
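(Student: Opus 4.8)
The plan is to show that if $T_n \to T$ uniformly on $(0,1)$ with all $T_n, T$ taking values in $[T_1,T_2]$, then the corresponding steady states $f_n$ of the linear problem \eqref{eq:steady}-\eqref{bc:left}-\eqref{bc:right} converge in a strong enough sense that their temperature profiles $\mathscr{F}(T_n) = \tau_{T_n}$ converge uniformly to $\mathscr{F}(T) = \tau_T$. The natural vehicle is the explicit representation from Lemma \ref{lem: representation of sol}: the steady state $f_n$ is determined by the density $\rho_n$ and the profile $T_n$ through the Duhamel formulas \eqref{repres of f for v>0}-\eqref{repres of f for v<0} together with the boundary constants $C_\pm^{(n)}, C_1, C_2$ (note $C_1, C_2$ do not depend on the profile). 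So the first step is to show $\rho_n \to \rho$ uniformly. From the representation, $\rho_n$ solves a fixed-point relation of the form $\rho_n = \mathcal{K}_{T_n}\rho_n + g_{T_n}$ where $\mathcal{K}_{T_n}$ is the integral operator with kernel $\int_0^\infty \frac{1}{v}e^{-|x-y|/v}\mathcal{M}_{T_n(y)}(v)\,\mathrm{d}v$ (plus the symmetric $v<0$ part), and $g_{T_n}$ collects the boundary terms $\frac{C_-^{(n)}+C_2 C_+^{(n)}}{1-C_1 C_2}\sqrt{2\pi/T_1}\int_0^\infty e^{-x/v}\mathcal{M}_{T_1}(v)\,\mathrm{d}v$ etc.

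Second, I would establish the needed continuity of the data in $T$: the kernel of $\mathcal{K}_{T_n}$ depends continuously (in operator norm on $C((0,1))$, using the $\sqrt{|x-y|}$-integrable bound already derived in Lemma \ref{lem: bounds on rho}) on $T_n$ via dominated convergence, since $T_n(y) \to T(y)$ pointwise and $\mathcal{M}_{T_n(y)}(v)$ is dominated uniformly in $n$ by $\sqrt{T_2/T_1}\,\mathcal{M}_{T_2}(v)$; likewise $C_\pm^{(n)} \to C_\pm$ by dominated convergence in their defining integrals (Definition \ref{defn:C}), again using $\rho_n$ bounded in $L^\infty$ uniformly by Lemma \ref{lem: bounds on rho}. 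Crucially, the operator $\mathcal{K}_{T_n}$ has norm bounded by $2e^{-1/2}(2/(\pi T_1))^{1/4} < 1$ uniformly in $n$ under Condition \ref{condition} — exactly the contraction estimate already used in Lemma \ref{lem: bounds on rho} — so $(\mathrm{Id} - \mathcal{K}_{T_n})^{-1}$ is bounded uniformly. Then writing $\rho_n - \rho = (\mathrm{Id}-\mathcal{K}_{T_n})^{-1}\big[(\mathcal{K}_{T_n}-\mathcal{K}_T)\rho + (g_{T_n}-g_T)\big]$ and using uniform invertibility plus the convergence of $\mathcal{K}_{T_n}\to\mathcal{K}_T$ and $g_{T_n}\to g_T$ gives $\|\rho_n - \rho\|_\infty \to 0$.

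Third, once $\rho_n \to \rho$ uniformly and $C_\pm^{(n)}\to C_\pm$, I feed this back into the pressure: from Lemma \ref{lemm: bounds on pressure}'s proof, $P_{T_n}$ is the constant $\int_{\mathbb{R}} |v|^2 f_n(1,v)\,\mathrm{d}v$, which is an explicit continuous functional of $(\rho_n, T_n)$ through the representation at $x=1$; dominated convergence gives $P_{T_n}\to P_T$. Then $\tau_{T_n}(x) = P_{T_n}/\rho_n(x) \to P_T/\rho_T(x) = \tau_T(x)$ uniformly in $x$, because $\rho_n$ is bounded below uniformly (Lemma \ref{lem: bounds on rho}) and converges uniformly. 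This yields $\|\mathscr{F}(T_n)-\mathscr{F}(T)\|_\infty \to 0$, which is the claim. Strictly, $\mathscr{F}$ is defined on all of $C((0,1))$, but continuity only needs checking on the invariant set $\{T : T_1 \le T \le T_2\}$ relevant for Schauder; alternatively one truncates any profile to $[T_1,T_2]$ before applying the argument.

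The main obstacle I expect is making the convergence $\mathcal{K}_{T_n}\to\mathcal{K}_T$ genuinely uniform in $x$ rather than merely pointwise: the kernel has an integrable singularity $\sim \sqrt{|x-y|}$ near $y=x$, so one must split the $y$-integral into a small neighbourhood of $x$ (where the kernel bound is uniformly small and controls things regardless of whether $T_n$ has converged there) and its complement (where $\mathcal{M}_{T_n(y)}(v)\to\mathcal{M}_{T(y)}(v)$ with a uniform dominating function and bounded convergence applies). Handling this splitting carefully, and checking the analogous point for the $g_{T_n}$ terms and for $P_{T_n}$, is the real work; everything else is dominated convergence plus the uniform contraction already in hand from Section \ref{subsection Linfty bounds}.
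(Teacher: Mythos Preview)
Your overall architecture --- write $\rho$ as the fixed point of an affine map with contractive linear part, perturb, then recover $P$ and $\tau$ --- matches the paper's in spirit, and the paper in fact obtains the sharper quantitative conclusion
\[
\|\mathscr{F}(T)-\mathscr{F}(\widetilde T)\|_\infty \leq \Lambda(T_1,T_2)\,\big\|\sqrt{T}-\sqrt{\widetilde T}\big\|_\infty
\]
by differentiating the kernels in $t=\sqrt{T(y)}$ rather than appealing to dominated convergence. So where you use a soft sequential argument, the paper gets an explicit Lipschitz-type bound; both lead to continuity, and your splitting of the singular kernel near $y=x$ is the right way to make the soft version rigorous.

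There is, however, a genuine circularity in your scheme as written. You place the boundary contribution into the inhomogeneous term $g_{T_n}$ and then argue $g_{T_n}\to g_T$ by dominated convergence. But $g_{T_n}$ contains the constants $C_\pm^{(n)}$, and by Definition~\ref{defn:C} these are integrals of $\rho_n(y)$ against a $T_n$-dependent kernel --- they are \emph{linear in $\rho_n$}, not data. So ``$C_\pm^{(n)}\to C_\pm$ by dominated convergence'' presupposes $\rho_n\to\rho$, which is exactly what you are trying to prove. The fixed-point equation for $\rho$ is in fact homogeneous (the steady-state problem is linear in $f$, with uniqueness coming from the mass normalisation $\int\rho=1$), so there is no genuinely $\rho$-independent forcing term. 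The fix is to fold the boundary contribution back into the linear operator: write $\rho = \widehat{\mathcal K}_{T}\rho$ where $\widehat{\mathcal K}_{T}$ is $\mathcal K_T$ plus the rank-two piece $B_1(x)\,\frac{\langle\cdot\,,\,k_-\rangle + C_2\langle\cdot\,,\,k_+\rangle}{1-C_1C_2}+\text{symm.}$, and then check that $\widehat{\mathcal K}_T$ (restricted to mass-one functions, or after quotienting by constants) still has the needed contraction property. This is precisely what the paper does implicitly: its estimate $|C_--\widetilde C_-|\le \frac{2}{\sqrt{T_1}}\|\sqrt T-\sqrt{\widetilde T}\|_\infty + \sqrt{\pi/(2T_1)}\,\|\rho-\tilde\rho\|_\infty$ keeps the $\rho$-dependence of $C_\pm$ visible, feeds it into the final inequality $\|\rho-\tilde\rho\|_\infty \le CT_1^{-1/4}\|\rho-\tilde\rho\|_\infty + CT_1^{-1/2}\|\sqrt T-\sqrt{\widetilde T}\|_\infty$, and then absorbs because the \emph{total} coefficient $CT_1^{-1/4}$ (interior plus boundary contributions) is less than~$1$ under Condition~\ref{condition}. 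Once you make that correction your argument goes through.
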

\begin{proof}
Let $T(x), \widetilde{T}(x)$ be two different continuous functions satisfying the bounds, \textit{i.e.} are bounded below and above by $T_1,T_2$ respectively where $T_1,T_2$ satisfy condition \eqref{condition}. 
In order to conclude the continuity of $\mathcal{F}$, we want to estimate the quantity $ | \mathcal{F}(T)(x)- \mathcal{F}(\widetilde{T})(x)|$ and bound it in terms of the difference of the two temperatures $ |T(x) - \widetilde{T}(x)|$ for all $x \in (0,1)$. In what follows we write  $\widetilde{P}:= P_{\widetilde{T}}, \tilde{\rho} := \rho_{\widetilde{T}} $ and we have  \begin{align*} |  \mathcal{F}(T)(x)- \mathcal{F}(\widetilde{T})(x) | &=  \left\vert \frac{P}{\rho}(x) - \frac{\widetilde{P}}{\tilde{\rho}}(x)  \right\vert \\ &\le \frac{|P- \widetilde{P}|}{\rho}(x)+ \frac{P\widetilde{P} |\rho - \tilde{\rho}|}{\rho \tilde{\rho}}(x).
\end{align*}

Therefore we need to estimate the differences between the two densities and the two pressures that correspond to the two different temperatures. We proceed as in the proofs of the Lemmas in subsection \ref{subsection Linfty bounds}, using Lemma \ref{lem: representation of sol}. We recall the result of Lemma \ref{lem: representation of sol}
\begin{align*}
f(x,v) =& e^{-x/|v|}\widetilde{\mathcal{M}}_1(v) \frac{C_-+ C_2 C_+}{1-C_1C_2} + \int_0^x \frac{1}{|v|}e^{-(x-y)/|v|}\rho(y) \mathcal{M}_{T(y)}(v) \mathrm{d}y \qquad v>0, \\
f(x,v) =& e^{-(1-x)/|v|} \widetilde{\mathcal{M}}_2(v) \frac{C_+ + C_1 C_-}{1-C_1 C_2} + \int_x^1 \frac{1}{|v|}e^{-(y-x)/|v|} \rho(y) \mathcal{M}_{T(y)}(v) \mathrm{d}y \qquad v<0.
\end{align*}

First note that the constants $C_1, C_2$  do not depend on whether we use $T$ or $\widetilde{T}$. However $C_-, C_+$ depend on this. In this case we would like to look at the differences between two different realisations. We recall
\[ C_- = \int_0^1 \rho(y) \int_{-\infty}^0 e^{-y/|v|} \mathcal{M}_{T(y)}(v) \mathrm{d}v \mathrm{d}y. \] Here we first look at the integral in $v$, 
\[ \int_{-\infty}^0 e^{-y/|v|} \mathcal{M}_{T(y)}(v) \mathrm{d}v = \int_0^\infty e^{-y/\sqrt{T(y)}v} \mathcal{M}_1(v) \mathrm{d}v:= F\left(y, \sqrt{T(y)}\right). \] 
We calculate
\begin{align*} \frac{\mathrm{d}}{\mathrm{d}t} F(y,t) =& \int_0^\infty \frac{y}{t^2 v} e^{-y/tv} \mathcal{M}_1(v) \mathrm{d}v \\
=& \int_0^\infty \frac{1}{t} \frac{\mathrm{d}}{\mathrm{d}v} \left( e^{-y/tv} \right) v \mathcal{M}_1(v) \mathrm{d}v\\
=& \int_0^\infty \frac{1}{t} e^{-y/tv} (v^2-1) \mathcal{M}_1(v) \mathrm{d}v  \leq \frac{2}{t}. \end{align*} 
Therefore,
\[\left\vert F\left(y, \sqrt{T(y)}\right) - F\left(y, \sqrt{\widetilde{T}(y)}\right) \right\vert  \leq \frac{2}{\sqrt{T_1}}\left\vert \sqrt{T(y)} - \sqrt{\widetilde{T}(y)} \right\vert.  \] This means that 
\[ |C_- - \widetilde{C}_-| \leq  \frac{2}{\sqrt{T_1}}\big\|\sqrt{T} - \sqrt{\widetilde{T}}\big\|_\infty  + \int_0^1 |\rho(y)-\tilde{\rho}(y)| \int_0^\infty \left(1-e^{-y/v\sqrt{\widetilde{T}(y)}} \right) \mathcal{M}_1(v) \mathrm{d}v\mathrm{d}y. \] Then we can use our bounds from earlier to get that
\[   |C_- - \widetilde{C}_-| \leq \frac{2}{\sqrt{T_1}} \big\|\sqrt{T} - \sqrt{\widetilde{T}}\big\|_\infty  + \sqrt{\frac{\pi}{2T_1}} \| \rho -\tilde{\rho}\|_\infty.\] Exactly the same result is true for $C_+$. \\

Now in order to bound the difference of the densities, we write
\[ B_1(x) = \int_0^\infty e^{-x/v} \widetilde{\mathcal{M}}_1(v) \mathrm{d}v \approx \sqrt{\frac{\pi}{2T_1}}, \] and
\[ B_2(x) = \int_{-\infty}^0 e^{-(1-x)/|v|} \widetilde{\mathcal{M}}_2(v) \mathrm{d} v \approx \sqrt{\frac{\pi}{2T_2}}. \]
 These quantities don't depend on $T,\widetilde{T}$. Let us also write
\[ A_1(x) = \int_0^x \int_0^\infty \frac{1}{v} e^{-(x-y)/|v|} \rho(y) \mathcal{M}_{T(y)}(v) \mathrm{d}v, \] and $A_2(x)$ defined symmetrically. Then we have that
\begin{align} \label{eq: rho}
 \rho(x) = \frac{B_1(x)(C_-+C_2C_+)}{1-C_1C_2} + \frac{B_2(x)(C_++C_1C_-)}{1-C_1C_2} +A_1 +A_2. 
\end{align}
Therefore,
\begin{align} \label{eq: rho - tilderho}
|\rho(x) - \tilde{\rho}(x)| \leq \frac{2(B_1(x)+B_2(x))}{1-C_1C_2}\left(|C_- - \tilde{C}_-| + |C_+ - \tilde{C}_+| \right) + |A_1-\tilde{A}_1| + |A_2 - \tilde{A}_2|. 
\end{align}
We know from Lemma \ref{lem: est C_1,C_2} that $$(1-C_1C_2)^{-1} \approx \sqrt{\frac{2}{\pi}}\frac{\sqrt{T_1T_2}}{\sqrt{T_1}+\sqrt{T_2}}.$$  Therefore, 
\[  \frac{2(B_1(x)+B_2(x))}{1-C_1C_2} \leq 2. \] Therefore we can bound the first term in the rhs of \eqref{eq: rho - tilderho} by
\[ 4(2+ \sqrt{\pi/2}) \frac{1}{\sqrt{T_1}} \left( \big\|\sqrt{T} - \sqrt{\widetilde{T}}\big\|_\infty  + \|\rho - \tilde{\rho}\|_\infty \right). \]
Regarding the differences between the $A_i$'s, we look only at $A_1$, since the other case is the same. Let us write
\[ G(y,t) = \int_0^\infty \frac{1}{vt} e^{-(x-y)/vt} \mathcal{M}_1(v) \mathrm{d}v. \] 
By our earlier calculations, for example in the proof of the Lemma \ref{lem: bounds on rho}, we know that
\[ G(y,t) \leq e^{-1/2} \left( \frac{2}{\pi t^2} \right)^{1/4}. \] Then we can differentiate to see
\[ \frac{\mathrm{d}}{\mathrm{d}t}G(y,t) = \int_0^\infty \left( -\frac{1}{vt^2} + \frac{x-y}{v^2 t^3} \right) e^{-(x-y)/vt} \mathcal{M}_1(v) \mathrm{d}v \leq \frac{2}{t} G(y,t) \leq Ct^{-3/2}. \] This last inequality only holds for $t\geq 1$.  Now we have,
\[ A_1(x)- \widetilde{A}_1(x) \leq \int_0^1 \left(\rho(y) \left| G\left(y, \sqrt{T(y)}\right)-G\left(y, \sqrt{\widetilde{T}(y)}\right) \right| + (\rho(y)-\tilde{\rho}(y)) G\left(y, \sqrt{\widetilde{T}(y)}\right) \right)\mathrm{d}y.  \] Therefore,
\[ |A_1(x) - \widetilde{A}_1(x)| \leq CT_1^{-3/4} \big\|\sqrt{T} - \sqrt{\widetilde{T}}\big\|_\infty + CT_1^{-1/4} \|\rho - \tilde{\rho}\|_\infty. \] Therefore overall,
\[ \|\rho - \tilde{\rho}\|_\infty \leq C T_1^{-1/4} \|\rho - \tilde{\rho}\|_\infty + C T_1^{-1/2}\big\|\sqrt{T} - \sqrt{\widetilde{T}}\big\|_\infty. \] Hence,
\begin{align} \label{eq: rho- tilderho}
\|\rho - \tilde{\rho}\|_\infty \leq CT_1^{-1/2} \big\|\sqrt{T} - \sqrt{\widetilde{T}}\big\|_\infty .  
\end{align}

Regarding the estimation of the difference of the pressures we define the following.
\[ D_1(x) = \int_0^\infty |v|^2 e^{-x/v} \widetilde{\mathcal{M}}_1(v) \mathrm{d}v,\ D_2(x) = \int_{-\infty}^0 |v|^2 e^{-(1-x)/v} \widetilde{\mathcal{M}}_2(v) \mathrm{d}v. \] These quantites do not depend on $T$ or $\widetilde{T}$ and we have that
\[ D_1 \approx \sqrt{T_1}, D_2 \approx \sqrt{T_2}. \] Furthermore, we have
\[ E_1(x) = \int_0^x  \int_0^\infty |v| e^{-(x-y)/v} \rho(y) \mathcal{M}_{T(y)}(v) \mathrm{d}v \mathrm{d}y, \] and
\[ E_2(x) = \int_x^1 \int_{-\infty}^0 |v| e^{-(y-x)/|v|} \rho(y) \mathcal{M}_{T(y)}(v) \mathrm{d}v \mathrm{d}y. \]
Then the formula for the pressure can be rewritten as follows
\[ P = \frac{D_1(C_- + C_2 C_+) + D_2(C_+ + C_1 C_-)}{1-C_1C_2} + E_1 + E_2. \] Therefore,
\[ |P -\widetilde{P}| \leq \frac{2(D_1+D_2)}{1-C_1C_2} \left( |C_- -\tilde{C}_-| + |C_+ - \tilde{C}_+| \right) + |E_1 - \tilde{E}_1| + |E_2 - \tilde{E}_2|. \]
So we can bound,
\[ \frac{2(D_1+D_2)}{1-C_1C_2} \leq 2(\sqrt{T_1} + \sqrt{T_2}) \frac{\sqrt{T_1T_2}}{\sqrt{T_1}+\sqrt{T_2}} =2\sqrt{T_1T_2}. \] Then we want to bound the first term by
\[  C\sqrt{T_2} \big\|\sqrt{T} - \sqrt{\widetilde{T}}\big\|_\infty . \]
In general we can see that,
\begin{align} \label{eq: P-tilde P}
|P - \widetilde{P}| \leq C\sqrt{T_2} \big\|\sqrt{T} - \sqrt{\widetilde{T}}\big\|_\infty .  \end{align}
Finally about the difference in temperatures,  we use the results from Lemmas \ref{lem: bounds on rho} and \ref{lemm: bounds on pressure}: that $\rho, \tilde{\rho} \sim 1$, and $P \sim \sqrt{T_1T_2}$. Combining \eqref{eq: rho- tilderho} and  \eqref{eq: P-tilde P} with the calculations in the beginning of this proof, we have for all $x$
\begin{align*} 
 \left\vert  \mathcal{F}(T)(x)-\mathcal{F}(\widetilde{T})(x) \right\vert  &\leq   \frac{C \sqrt{T_2}}{1+\kappa_1T_1^{-1/4}}\big\|\sqrt{T} - \sqrt{\widetilde{T}}\big\|_\infty  + \frac{(T_1T_2)C}{(1+\kappa_1 T_1^{-1/4})^2} \big\|\sqrt{T} - \sqrt{\widetilde{T}}\big\|_\infty  \\ &\le \Lambda(T_1,T_2) \big\|\sqrt{T} - \sqrt{\widetilde{T}}\big\|_\infty 
  \end{align*}  for a constant $\Lambda(T_1,T_2)$ that depends only on the temperatures $T_1,T_2$.
 Since $T, \widetilde{T}$ are bounded below by $T_1$ this gives the required continuity.

\end{proof}

\subsection{Fixed Point Argument} \label{subsection: fixed point thrm}

In this subsection we show how an application of  Schauder's fixed point theorem yields the main result. 
First, for completeness we remind here the Schauder's Theorem, which can be found for example in \cite[Theorem 2.3.7]{Smar74}. 
\begin{thm}[Schauder Fixed Point Theorem] \label{Theorem: Schauder}
Let $S$ be a non-empty, convex closed subsect of a Hausdorff topological vector space and $F$ a mapping of $S$ into itself so that $F(S)$ is compact then $F$ has a fixed point.
\end{thm}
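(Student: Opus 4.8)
The plan is to deduce this infinite-dimensional statement from the finite-dimensional Brouwer fixed point theorem, by manufacturing a net of approximate fixed points and passing to a limit using the compactness of $K := F(S)$. I will work in the locally convex setting, which is what the construction below requires and what the cited reference \cite{Smar74} assumes; the scheme is identical once one has fixed a separating family of continuous seminorms generating the topology. Note that $F$ maps $S$ into itself and $S$ is convex, so $K = F(S) \subseteq S$ and the convex hull of any finite subset of $K$ again lies in $S$.

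The central device is the \emph{Schauder projection}. Fix a continuous seminorm $p$ and a tolerance $\epsilon > 0$. Since $K$ is compact it admits a finite $\epsilon$-net $y_1, \dots, y_n \in K$, so that every $y \in K$ satisfies $p(y - y_i) < \epsilon$ for some $i$. Define the nonnegative continuous weights $\mu_i(y) = \max\{0, \epsilon - p(y - y_i)\}$, which satisfy $\sum_i \mu_i(y) > 0$ on $K$, and set
\[ P_\epsilon(y) = \frac{\sum_{i=1}^n \mu_i(y)\, y_i}{\sum_{i=1}^n \mu_i(y)}. \]
Then $P_\epsilon$ is continuous, maps $K$ into the convex hull $C = \mathrm{conv}\{y_1,\dots,y_n\}$, and satisfies $p(P_\epsilon(y) - y) < \epsilon$ for every $y \in K$, since $P_\epsilon(y)$ is a convex combination of points lying within $p$-distance $\epsilon$ of $y$.

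Next I would invoke Brouwer. The set $C$ is a compact convex subset of the finite-dimensional space $\mathrm{span}\{y_1,\dots,y_n\}$, and $C \subseteq S$ by convexity of $S$. The composition $P_\epsilon \circ F$ is a continuous self-map of $C$, because $F$ sends $C \subseteq S$ into $K$ and $P_\epsilon$ sends $K$ into $C$. Brouwer's theorem then yields a point $x_\epsilon \in C$ with $P_\epsilon(F(x_\epsilon)) = x_\epsilon$, whence
\[ p\big(x_\epsilon - F(x_\epsilon)\big) = p\big(P_\epsilon(F(x_\epsilon)) - F(x_\epsilon)\big) < \epsilon. \]
Thus $x_\epsilon$ is an approximate fixed point to accuracy $\epsilon$ in the seminorm $p$. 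Finally I would pass to the limit: letting $\epsilon \to 0$, the points $F(x_\epsilon)$ all lie in the compact set $K$, so some subnet converges to a limit $z \in K \subseteq S$. Running the construction over the whole separating family of seminorms forces $p(x_\epsilon - F(x_\epsilon)) \to 0$ for every continuous seminorm $p$, so the corresponding net $x_\epsilon$ converges to the same $z$, and continuity of $F$ gives $F(z) = \lim F(x_\epsilon) = z$.

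The main obstacle is the bookkeeping required to make the limiting argument run simultaneously over the entire separating family of seminorms rather than a single one: the approximate fixed points must be indexed by pairs (seminorm, tolerance), and one must extract a genuine cluster point from $K$ using that compactness furnishes convergent subnets. In the normed or metrizable case this collapses to an ordinary sequential argument and is routine; the genuine content lies entirely in the finite-dimensional reduction through $P_\epsilon$ combined with Brouwer's theorem.
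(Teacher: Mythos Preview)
The paper does not prove this theorem; it is quoted from \cite[Theorem~2.3.7]{Smar74} as a standard tool and invoked without argument. Your sketch is the classical proof via Schauder projections and Brouwer's theorem, and is essentially the one found in Smart's book, so in that sense it matches the (omitted) reference.

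Two remarks. First, your restriction to the locally convex setting is appropriate and honest: the statement as printed says ``Hausdorff topological vector space'' with no local convexity hypothesis, but the Schauder projection argument needs continuous seminorms to define the weights $\mu_i$, and the general Hausdorff case was only settled by Cauty in 2001 by quite different methods. Since the paper applies the theorem in $C((0,1))$ with the sup norm, the locally convex (indeed Banach) version is all that is needed, and your proof covers it. Second, note that the statement as written also omits the hypothesis that $F$ be continuous, which you correctly use in the final step; without it the theorem is false.

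Your acknowledged gap in the bookkeeping of the limiting net is real but routine: index the approximate fixed points by the directed set of open neighbourhoods $U$ of $0$, choose for each $U$ a seminorm $p_U$ and $\epsilon_U$ with $\{p_U < \epsilon_U\} \subseteq U$, and extract a convergent subnet of $(F(x_U))$ in $K$; then $x_U - F(x_U) \to 0$ in every seminorm forces $x_U$ to converge to the same limit.
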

\begin{proof} [Proof of Theorem \ref{thm:main}]  
We apply Schauder's Theorem to get a fixed point for $\mathcal{F}$: \\
Firstly by Proposition \ref{prop:ctyF} we know that the map $\mathscr{F}: C(0,1) \rightarrow C(0,1)$ is a continuous map. For $T_1, T_2$ fixed temperatures satisfying condition \ref{condition}, we have that  $ T(x) \in  [T_1, T_2]$ implies $ \tau \in
[T_1,T_2]$, in other words if we define the set $$ S_{T_1,T_2}:= \{ T \in C([0,1]): T_1 \le T(x) \le T_2  \}, $$ then $$ \mathcal{F}(S_{T_1,T_2}) \subset S_{T_1,T_2}. $$ Also, from Proposition \ref{prop: Holder cont for tau}, we have that $\mathcal{F}(S_{T_1,T_2})$ satisfies a H\"older condition of order $1/2$ with  a constant depending only on the two fixed temperatures. Since moreover $\mathcal{F}(S_{T_1,T_2})$ is uniformly bounded, we conclude by Arzela-Ascoli, the compactness of the set.   

The existence of a fixed point for this mapping ensures us that the steady state for the linear BGK model with temperature profile $T(x)$ is a steady state for the original non-linear model \eqref{eq:nl}-\eqref{BC1}-\eqref{BC2} as well since $T(x)=T_f(x)$.
The properties of this non-equilibrium steady state listed in the statement are proved in Lemmas  \ref{lem: bounds on rho}, \ref{lem: bounds on tau} and Proposition \ref{prop:ctyF} respectively.
\end{proof}

\section{Discussion of the results and future work} \label{sec: discussion}

 First we look at how Fourier's law applies in this specific context. The heat flux associated to the NESS is constant along the interval: 
Recall that the temperature that corresponds to the stationary solution $f$ is given by $$T_f(x) = \frac{1}{\rho_f(x)} \int |v-u(x)|^2f(x,v) dv$$   and the heat flux is the vector field $$ J(x):=\int (v-u(x))|v-u(x)|^2f(x,v) dv.$$ Here
we have $u(x)=0$ everywhere (see beginning of proof of Lemma \ref{lemm: bounds on pressure}). We easily get that $$\partial_x J(x) =0\quad \text{for}\ x \in (0,1),$$ i.e. the heat flux is constant. This means that if Fourier's law \eqref{eq: Fourier law} holds, then $\kappa(T_f)\partial_x T_f(x) $ is constant. Note that this conclusion can be  also found in \cite[proof of Theorem 1.5]{EGKM13} for the full Boltzmann operator and for temperatures close to equilibrium. There, through comparison with numerical simulations  indicating that the temperature is a nonlinear function, one can see that Fourier's law is violated in the kinetic regime. 

 In our seting $T_f(x)$ is close to the constant function $\sqrt{T_1T_2}$ as $T_1\rightarrow \infty$,  as described in the main Theorem \ref{thm:main}. So for large boundary temperatures $T_1$, the function $T_f(x)$ is constant in the bulk of the domain $(0,1)$ which is reminiscent to the behaviour of the heat flux in the harmonic atom networks. 

\smallsection{Comparison with the heat flux in the microscopic harmonic atom chains} In the case of harmonic oscillator chains the temperature profile is similarly close to being constant as shown in \cite{RLL67}, at least in the case of small temperature difference. However in this case the temperature is the centre of the chain is the linear average $(T_1+T_2)/2$ whereas in our case the temperature is close to $\sqrt{T_1T_2}$. In the paper \cite{RLL67} they also show that the temperature is paradoxically lower than the average very close to the hot reservoir. It would be interesting to see if a similar effect could be observed in our model.

As regards the connection of microscopic oscillator chains with the Boltzmann equation for phonons, it has been shown that one can derive a phonon Boltzmann equation as a kinetic limit (high frequency limit) starting from infinite chain of harmonic oscillators with a small anhamonicity. We refer to \cite{Spohn06} for that and to \cite{BOS10} for a stochastically perturbed version of it. Another very interesting work analysing the kinetic limit in case of an infinite linear chain of oscillators coupled to a single Langevin thermostat at the boundary is \cite{KORS20}. 

\smallsection{Possible directions}
The most natural and important question arising from these results is uniqueness of the steady state given here. A less ambitious question in the same direction is whether the steady state found here is stable under small perturbations, this is done in the Boltzmann equation setting in \cite{AEMN10, AEMN12} and in the BGK setting in \cite{CELMM18, CELMM19}. This would also be an interesting question in terms of the study of hypocoercivity as there are only a small number of works showing hypocoercivity for equations on bounded domains and these are generally in the context of the Boltzmann equation intiated by \cite{YGuo10}. Showing hypocoercivity for equations with non-explicit non-equilibrium states is also a significant challenge.

Another possible angle for future work is to investigate similar problems in higher dimensions. This would involve looking at the non-linear BGK equation where $x \in \Omega \subset \mathbb{R}^d$ and $\Omega$ is a smooth bounded domain. In this case getting $L^\infty$ estimates on the solution from the Langrangian expansion of the steady state becomes much more challenging.

\smallsection{Acknowledgements} 
We would like to thank Cl\'ement Mouhot  for many insightful discussions and encouragement and Helge Dietert for sharing his observations for possible connections with the harmonic oscillator chains. JE was supported by FSPM postdoctoral fellowship and the grant ANR-17-CE40-0030,  AM was supported by the EPSRC grant EP/L016516/1 for the University of Cambridge CDT, the CCA.

 \bibliographystyle{alpha}
\bibliography{bibliography}

\newcommand{\etalchar}[1]{$^{#1}$}
\begin{thebibliography}{CEHRB18}

\bibitem[AEMN10]{AEMN10}
L.~Arkeryd, R.~Esposito, R.~Marra, and A.~Nouri.
\newblock Stability for {R}ayleigh-{B}enard convective solutions of the
  {B}oltzmann equation.
\newblock {\em Arch. Ration. Mech. Anal.}, 198(1):125--187, 2010.

\bibitem[AEMN12]{AEMN12}
Leif Arkeryd, Raffaele Esposito, Rossana Marra, and Anne Nouri.
\newblock Exponential stability of the solutions to the {B}oltzmann equation
  for the {B}enard problem.
\newblock {\em Kinet. Relat. Models}, 5(4):673--695, 2012.

\bibitem[AN00]{AN00}
Leif Arkeryd and Anne Nouri.
\newblock {$L^1$} solutions to the stationary {B}oltzmann equation in a slab.
\newblock {\em Ann. Fac. Sci. Toulouse Math. (6)}, 9(3):375--413, 2000.

\bibitem[Ark00]{A00}
Leif Arkeryd.
\newblock On the stationary {B}oltzmann equation in {${\bf R}^n$}.
\newblock {\em Internat. Math. Res. Notices}, pages 625--641, 2000.

\bibitem[Ber19]{B19}
Armand Bernou.
\newblock A semigroup approach to the convergence rate of a collisionless gas.
\newblock {\em arXiv:1911.03228}, 2019.

\bibitem[BF119]{BF19}
Fourier and the science of today / fourier et la science d’aujourd’hui.
\newblock Volume 20, Issue 5:387--502, 2019.
\newblock Edited by P. Flandrin, C. Bernardin.

\bibitem[BF19]{BeFo19}
A.~Bernou and N.~Fournier.
\newblock A coupling approach for the convergence to equilibrium for a
  collisionless gas.
\newblock {\em arXiv:1910.02739}, 2019.

\bibitem[BGK54]{BGK54}
P.~L. Bhatnagar, E.~P. Gross, and M.~Krook.
\newblock A model for collision processes in gases. i. small amplitude
  processes in charged and neutral one-component systems.
\newblock {\em Phys. Rev.}, 94:511--525, May 1954.

\bibitem[BLRB00]{BLR00}
F.~Bonetto, J.~L. Lebowitz, and L.~Rey-Bellet.
\newblock Fourier's law: a challenge to theorists.
\newblock In {\em Mathematical physics 2000}, pages 128--150. Imp. Coll. Press,
  London, 2000.

\bibitem[BM]{BM20}
S.~Becker and A.~Menegaki.
\newblock The optimal spectral gap for regular and disordered harmonic networks
  of oscillators.
\newblock In preparation.

\bibitem[BOS10]{BOS10}
Giada Basile, Stefano Olla, and Herbert Spohn.
\newblock Energy transport in stochastically perturbed lattice dynamics.
\newblock {\em Arch. Ration. Mech. Anal.}, 195(1):171--203, 2010.

\bibitem[Car07]{Car07}
P.~Carmona.
\newblock Existence and uniqueness of an invariant measure for a chain of
  oscillators in contact with two heat baths.
\newblock {\em Stochastic Process. Appl.}, 117(8):1076--1092, 2007.

\bibitem[CEHRB18]{CEHRB18}
N.~Cuneo, J.-P. Eckmann, M.~Hairer, and L.~Rey-Bellet.
\newblock Non-equilibrium steady states for networks of oscillators.
\newblock {\em Electron. J. Probab.}, 23:28 pp., 2018.

\bibitem[CEL{\etalchar{+}}18]{CELMM18}
E.~A. Carlen, R.~Esposito, J.~L. Lebowitz, R.~Marra, and C.~Mouhot.
\newblock Approach to the steady state in kinetic models with thermal
  reservoirs at different temperatures.
\newblock {\em J. Stat. Phys.}, 172(2):522--543, 2018.

\bibitem[CEL{\etalchar{+}}19]{CELMM19}
Eric~A. Carlen, Raffaele Esposito, Joel Lebowitz, Rossana Marra, and Clement
  Mouhot.
\newblock Uniqueness of the non-equilibrium steady state for a 1d bgk model in
  kinetic theory.
\newblock {\em Acta Appl Math}, 2019.

\bibitem[CLM15]{CLM15}
Eric~A. Carlen, Joel~L. Lebowitz, and Cl\'{e}ment Mouhot.
\newblock Exponential approach to, and properties of, a non-equilibrium steady
  state in a dilute gas.
\newblock {\em Braz. J. Probab. Stat.}, 29(2):372--386, 2015.

\bibitem[Dha08]{Dhar08}
A.~Dhar.
\newblock Heat transport in low-dimensional systems.
\newblock {\em Advances in Physics}, 57, 08 2008.

\bibitem[EGKM13]{EGKM13}
R.~Esposito, Y.~Guo, C.~Kim, and R.~Marra.
\newblock Non-isothermal boundary in the {B}oltzmann theory and {F}ourier law.
\newblock {\em Comm. Math. Phys.}, 323(1):177--239, 2013.

\bibitem[ELM94]{ELM94}
R.~Esposito, J.~L. Lebowitz, and R.~Marra.
\newblock Hydrodynamic limit of the stationary {B}oltzmann equation in a slab.
\newblock {\em Comm. Math. Phys.}, 160(1):49--80, 1994.

\bibitem[ELM95]{ELM95}
R.~Esposito, J.~L. Lebowitz, and R.~Marra.
\newblock The {N}avier-{S}tokes limit of stationary solutions of the nonlinear
  {B}oltzmann equation.
\newblock volume~78, pages 389--412. 1995.
\newblock Papers dedicated to the memory of Lars Onsager.

\bibitem[EPRB99]{EPR99b}
J.-P. Eckmann, C.-A. Pillet, and L.~Rey-Bellet.
\newblock Non-equilibrium statistical mechanics of anharmonic chains coupled to
  two heat baths at different temperatures.
\newblock {\em Comm. Math. Phys.}, 201(3):657--697, 1999.

\bibitem[Eva16]{E16}
Josephine Evans.
\newblock Non-equilibrium steady states in {K}ac's model coupled to a
  thermostat.
\newblock {\em J. Stat. Phys.}, 164(5):1103--1121, 2016.

\bibitem[GP89]{GP89}
William Greenberg and Jacek Polewczak.
\newblock A global existence theorem for the nonlinear {BGK} equation.
\newblock {\em J. Statist. Phys.}, 55(5-6):1313--1321, 1989.

\bibitem[Guo10]{YGuo10}
Yan Guo.
\newblock Decay and continuity of the {B}oltzmann equation in bounded domains.
\newblock {\em Arch. Ration. Mech. Anal.}, 197(3):713--809, 2010.

\bibitem[Hai09]{Hair09}
M.~Hairer.
\newblock How hot can a heat bath get?
\newblock {\em Comm. Math. Phys.}, 292(1):131--177, 2009.

\bibitem[Hai16]{H10}
Martin Hairer.
\newblock Lecture notes: P@w course on the convergence of markov processes.
\newblock 2016.

\bibitem[HM09]{HM09}
M.~Hairer and J.~C. Mattingly.
\newblock Slow energy dissipation in anharmonic oscillator chains.
\newblock {\em Comm. Pure Appl. Math.}, 62(8):999--1032, 2009.

\bibitem[KORS20]{KORS20}
Tomasz Komorowski, Stefano Olla, Lenya Ryzhik, and Herbert Spohn.
\newblock High {F}requency {L}imit for a {C}hain of {H}armonic {O}scillators
  with a {P}oint {L}angevin {T}hermostat.
\newblock {\em Arch. Ration. Mech. Anal.}, 237(1):497--543, 2020.

\bibitem[Lep16]{Lep16}
S.~Lepri, editor.
\newblock {\em Thermal {T}ransport in {L}ow {D}imensions}, volume 921 of {\em
  Lecture Notes in Physics}.
\newblock Springer, [Cham], 2016.
\newblock From statistical physics to nanoscale heat transfer.

\bibitem[Men20]{Me19}
Angeliki Menegaki.
\newblock Quantitative {R}ates of {C}onvergence to {N}on-equilibrium {S}teady
  {S}tate for a {W}eakly {A}nharmonic {C}hain of {O}scillators.
\newblock {\em J. Stat. Phys.}, 181(1):53--94, 2020.

\bibitem[OAY89]{OAS89}
T.~Ohwada, K.~Aoki, and Sone Y.
\newblock Heat transfer and temperature distribution in a rarefied gas between
  two parallel plates with different temperatures: Numerical analysis of the
  boltzmann equation for a hard sphere molecule.
\newblock 1989.

\bibitem[Per89]{Per89}
B.~Perthame.
\newblock Global existence to the {BGK} model of {B}oltzmann equation.
\newblock {\em J. Differential Equations}, 82(1):191--205, 1989.

\bibitem[PP93]{PP93}
B.~Perthame and M.~Pulvirenti.
\newblock Weighted {$L^\infty$} bounds and uniqueness for the {B}oltzmann {BGK}
  model.
\newblock {\em Arch. Rational Mech. Anal.}, 125(3):289--295, 1993.

\bibitem[RBT02]{RBT02}
L.~Rey-Bellet and L.~E. Thomas.
\newblock Exponential convergence to non-equilibrium stationary states in
  classical statistical mechanics.
\newblock {\em Comm. Math. Phys.}, 225(2):305--329, 2002.

\bibitem[RLL67]{RLL67}
Z.~Rieder, J.~L. Lebowitz, and E.~Lieb.
\newblock Properties of a harmonic crystal in a stationary nonequilibrium
  state.
\newblock {\em Journal of Mathematical Physics}, 8(5):1073--1078, 1967.

\bibitem[Sma74]{Smar74}
D.~R. Smart.
\newblock {\em Fixed point theorems}.
\newblock Cambridge University Press, London-New York, 1974.
\newblock Cambridge Tracts in Mathematics, No. 66.

\bibitem[Spo06]{Spohn06}
Herbert Spohn.
\newblock The phonon {B}oltzmann equation, properties and link to weakly
  anharmonic lattice dynamics.
\newblock {\em J. Stat. Phys.}, 124(2-4):1041--1104, 2006.

\bibitem[Uka92]{U92}
Seiji Ukai.
\newblock Stationary solutions of the {BGK} model equation on a finite interval
  with large boundary data.
\newblock In {\em Proceedings of the {F}ourth {I}nternational {W}orkshop on
  {M}athematical {A}spects of {F}luid and {P}lasma {D}ynamics ({K}yoto, 1991)},
  pages 487--500, 1992.

\end{thebibliography}
\end{document}